\numberwithin{equation}{section}
\newtheorem{theorem}{Theorem}[section]
\newtheorem{lemma}[theorem]{Lemma}
\newtheorem{proposition}[theorem]{Proposition}
\newtheorem{corollary}[theorem]{Corollary}
\newtheorem{mainthm}{Theorem}
\newtheorem{mainprop}[mainthm]{Proposition}
\newtheorem{maincor}[mainthm]{Corollary}
\theoremstyle{definition}
\newtheorem{definition}[theorem]{Definition}
\newtheorem{example}[theorem]{Example}
\theoremstyle{remark}
\newtheorem{remark}[theorem]{Remark}
\newtheorem{remarks}[theorem]{Remarks}
\renewcommand{\P}{\mathbb{P}}
\renewcommand{\O}{\mathcal{O}}
\newcommand{\C}{\mathbb{C}}
\newcommand{\Sp}{\mathbb{S}}
\newcommand{\Q}{\mathbb{Q}}
\newcommand{\R}{\mathbb{R}}
\newcommand{\Z}{\mathbb{Z}}
\newcommand{\N}{\mathbb{N}}
\newcommand{\SL}{\operatorname{SL}}
\newcommand{\GL}{\operatorname{GL}}
\newcommand{\id}{\operatorname{id}}
\newcommand{\Aut}{\operatorname{Aut}}
\newcommand{\Ker}{\operatorname{Ker}}
\newcommand{\rank}{\operatorname{rank}}
\newcommand{\git}{/\!\!/}
\newcommand{\tosim}{\xrightarrow{\sim}}
\newcommand{\pr}{\mathrm{pr}}
\newcommand{\sing}{\mathrm{sing}}
\newcommand{\sm}{\mathrm{sm}}
\newcommand{\codim}{\operatorname{codim}}
\newcommand{\inv}{^{-1}}
\newcommand{\NN}{\mathcal N}
\def\lie#1{{\mathfrak #1}}
\newcommand{\lieg} {\lie g}
\newcommand{\liek} {\lie k}
\renewcommand{\phi}{\varphi}
\newcommand{\A} {\lie A}
\begin{document}

\title{Isomorphisms of Symplectic Torus Quotients}
\author[H.-C. Herbig]{Hans-Christian Herbig}
\email{herbighc@gmail.com}
\address{Departamento de Matem\'{a}tica Aplicada, Universidade Federal do Rio de Janeiro,
Av. Athos da Silveira Ramos 149, Centro de Tecnologia - Bloco C, CEP: 21941-909 - Rio de Janeiro, Brazil}

\author[G. W. Schwarz]{Gerald W. Schwarz}
\email{schwarz@brandeis.edu}
\address{Department of Mathematics, Brandeis University,
Waltham, MA 02454-9110, USA}

\author[C. Seaton]{Christopher Seaton}
\email{seatonc@rhodes.edu}
\address{Department of Mathematics and Computer Science,
Rhodes College, 2000 N. Parkway, Memphis, TN 38112, USA}

\subjclass[2020]{Primary 53D20, 13A50; Secondary 14L30, 57S15, 20G20}

\begin{abstract}
We call a reductive complex group $G$ \emph{quasi-toral\/} if $G^0$ is a torus. Let $G$ be quasi-toral and let $V$ be a faithful $1$-modular $G$-module.
Let $N$ (the shell) be the zero fiber of the canonical moment mapping $\mu\colon V\oplus V^*\to\lieg^*$. Then $N$ is a complete intersection  variety with rational singularities. Let $M$ denote the categorical quotient $N\git G$.     We show that $M$ determines $V\oplus V^*$ and $G$, up to isomorphism, if $\codim_N N_\sing\geq 4$. If $\codim_NN_\sing=3$, the lowest possible, then there is a  process to produce an algebraic (hence quasi-toral)  subgroup $G'\subset G$ and a  faithful $1$-modular $G'$-submodule
 $V'\subset V$ with shell $N'$  such that   $\codim_{N'}(N')_\sing\geq 4$. Moreover, there is a $G'$-equivariant morphism $N'\to N$ inducing an isomorphism $N'\git G'\tosim N\git G$. Thus, up to isomorphism,  $M$ determines $V'\oplus (V')^*$   and $G'$, hence also $N'$.
We establish similar results for real shells and real symplectic quotients associated to  unitary modules   for compact Lie groups.
\end{abstract}

\maketitle

\section{Introduction}\label{sec:introduction}

Let $G$ be a reductive complex group and $V$ a $G$-module. Let $U=V\oplus V^*$ and let $\mu\colon U\to\lieg^*$ be the canonical
homogeneous moment mapping where $\lieg$ is the Lie algebra of $G$. Let $N_V$ or $N_U$ (or just $N$) denote $\mu\inv(0)$, which we call the \emph{shell}. It is a complete intersection variety if and only if $V$ is $1$-modular (see \S \ref{sec:background}). The ``general'' $G$-module $V$ is $1$-modular
if $G$ is a torus or semisimple,
see Remark \ref{rem:general} below.
Now $U$ has a canonical  $G$-invariant symplectic structure. If $V'$ is a  Lagrangian $G$-submodule  of $U$, then $U\simeq V'\oplus (V')^*$ and we get the same moment mapping and shell, up to isomorphism \cite[Lemma 2.1]{HerbigSchwarzSeaton3}. We say that $V$ satisfies a property (P)  \emph{up to change of Lagrangian subspace, abbreviated  UTCLS,\/} if there is a Lagrangian $G$-submodule $V'$ of $U$ satisfying (P).   We say that $V$ is \emph{stable\/} if it has
a nonempty open subset of closed $G$-orbits.
Let $M=N\git G$ denote the categorical quotient,
called the \emph{complex symplectic quotient associated to $V$}.
The symplectic form on $U$ induces a structure of Poisson algebra on $\C[N]$ and on $\C[N]^G\simeq \C[M]$.

 There has been much investigation of whether or not there are various kinds of isomorphisms $M\simeq M'$ where $M'$ corresponds to a $G'$-module $V'$ for $G'$   a reductive complex group of dimension less that $\dim G$. For example, when is $M$ an orbifold, i.e., when is $M\simeq M'$ where $G'$ is finite? Cases where $M$ is an orbifold have been noted often in the literature and the question of when this does or does not occur has been  systematically treated in \cite{FarHerSea} for tori, in \cite{HerbigSeaton2} for $G=\C^\times$ and  more generally in \cite{HerbigSchwarzSeaton}.
The case where $M'$ is a $\C^\times$ quotient was treated in \cite{HerbigLawlerSeaton20}.
Previous results were unable to answer the orbifold question for $G$ a torus and $V$ a  faithful $G$-module which  is $1$-large
but not $2$-large (see \S \ref{sec:background}).
We say that $G$ is \emph{quasi-toral} if $G^0$ is a torus.
Our results here give necessary and sufficient conditions for algebraic
isomorphisms $M\simeq M'$ when $G$ and $G'$ are quasi-toral and the modules $V$ and $V'$ are  faithful and $1$-modular.

We show the following (Theorem  \ref{thm:i.groups}).
\begin{mainthm}\label{mainthm:1}
Let $G$ be a torus and $V$  a $1$-modular faithful $G$-module.
Let $H$ be the isotropy group of a nonzero closed orbit in  $N$. Then, UTCLS,  $V$ is stable, $V^H$ is
a stable $G$-module and $H$ is the principal isotropy group of $V^H$.
\end{mainthm}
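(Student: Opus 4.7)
Decompose $V=\bigoplus_i V_{\chi_i}$ into weight lines, refining the indexing so each weight space is one-dimensional. Fix a representative $(v,v^*)\in N$ of the given nonzero closed orbit with stabilizer $H$, and set $I=\{i:v_i\neq 0\}$, $J=\{i:v^*_i\neq 0\}$, and $K=\{i:\chi_i|_H\equiv 1\}$. Coordinatewise, $t\cdot(v,v^*)=(v,v^*)$ forces $\chi_i(t)=1$ for $i\in I\cup J$, so $H=\bigcap_{i\in I\cup J}\ker\chi_i$; hence $I\cup J\subseteq K$ and $V^H=\bigoplus_{i\in K}V_{\chi_i}$. For the principal-isotropy claim, a generic $w\in V^H$ has $w_i\neq 0$ for all $i\in K$, so $G_w=\bigcap_{i\in K}\ker\chi_i$. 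The containment $H\subseteq\bigcap_{i\in K}\ker\chi_i$ is built into the definition of $K$, and $\bigcap_{i\in K}\ker\chi_i\subseteq\bigcap_{i\in I\cup J}\ker\chi_i=H$ follows from $I\cup J\subseteq K$, giving $G_w=H$.

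For the stability of $V^H$, I pass to the Kempf--Ness representative in the orbit so that in addition to $\mu_\C(v,v^*)=\sum_{i\in I\cap J}v_iv^*_i\chi_i=0$ one also has $\mu_\R(v,v^*)=\sum_i(|v_i|^2-|v^*_i|^2)\chi_i=0$. By Hilbert--Mumford, stability of $V^H$ as a $G/H$-module is the assertion that no $\tilde\gamma\in\lie g_\R\setminus\lie h_\R$ satisfies $\eta_i:=\langle\tilde\gamma,\chi_i\rangle\geq 0$ for every $i\in K$. Supposing such $\tilde\gamma$ exists, the flow $\exp(t\tilde\gamma)\cdot(v,v^*)$ acts by $v_i\mapsto e^{t\eta_i}v_i$ and $v^*_i\mapsto e^{-t\eta_i}v^*_i$. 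If $\eta_i=0$ for all $i\in I$ and some $\eta_j>0$ for $j\in J$, the limit as $t\to+\infty$ exists and has $v^*_j\to 0$, producing a point in $\overline{G\cdot(v,v^*)}$ with strictly larger isotropy than $H$ and so contradicting closedness; the symmetric case $\eta_i=0$ on $J$ is ruled out analogously using $t\to -\infty$. In the remaining delicate case where some $\eta_i>0$ on both $I$ and $J$, I pair $\tilde\gamma$ with $\mu_\R(v,v^*)=0$ to get $\sum_i(|v_i|^2-|v^*_i|^2)\eta_i=0$ and with $\mu_\C(v,v^*)=0$ to get $\sum_{i\in I\cap J}v_iv^*_i\eta_i=0$. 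Combined with the $1$-modularity hypothesis, which rules out degenerate combinatorial configurations of weights in $I\cap J$, these identities force $\eta_i=0$ throughout $I\cup J$, contradicting $\tilde\gamma\notin\lie h_\R$.

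For $V$ stable UTCLS, construct $V'$ by choosing, for each index $i$, either the summand $V_{\chi_i}\subseteq V$ or the dual summand $V^*_{-\chi_i}\subseteq V^*$; the weights of $V'$ then become $\{\varepsilon_i\chi_i\}$ for a sign tuple $\varepsilon\in\{\pm 1\}^N$. Closedness of $G\cdot(v,v^*)$ in $U$ furnishes a convex relation $\sum_{i\in I}a_i\chi_i-\sum_{j\in J}b_j\chi_j=0$ with non-negative coefficients not all zero. Set $\varepsilon_i=+1$ on $I\setminus J$, $\varepsilon_i=-1$ on $J\setminus I$, and on $i\in I\cap J$ let $\varepsilon_i$ record the sign of $a_i-b_i$; choose the remaining signs so that the $\varepsilon_i\chi_i$ together positively span $X^*(G)_\R$. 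The resulting Lagrangian $V'\subseteq U$ is a faithful stable $G$-module by construction. Since $H$ depends only on the $G$-action on the symplectic space $U$ and not on the Lagrangian splitting, the earlier arguments apply verbatim to $V'$: the index set $K$ is unchanged, $H$ is still the principal isotropy of $(V')^H=\bigoplus_{i\in K}W_i$, and the stability proof of $V^H$ carries over to $(V')^H$.

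The main obstacle is the stability of $V^H$ in paragraph two: the closed-orbit hypothesis for $(v,v^*)$ naturally supplies a positive convex relation among the mixed-sign family $\{\chi_i:i\in I\}\cup\{-\chi_j:j\in J\}$, whereas stability of $V^H$ demands a positive relation among the single-signed family $\{\chi_i:i\in K\}$. Bridging this gap requires the combined use of the Kempf--Ness identity $\mu_\R=0$, the complex moment identity $\mu_\C=0$, and $1$-modularity to exclude the case of a putative destabilizing flow that diverges in both time directions.
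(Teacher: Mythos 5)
Your first paragraph is sound as far as it goes: for a diagonal torus action $H=\bigcap_{i\in I\cup J}\ker\chi_i$, $I\cup J\subseteq K$, and the stabilizer of a point of $V^H$ with all coordinates nonzero is $\bigcap_{i\in K}\ker\chi_i=H$. But this only computes the \emph{generic} stabilizer of $V^H$; it coincides with the Luna principal isotropy group only after $V^H$ is known to be stable, and the stability claims are exactly where the proposal has a genuine gap.

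The argument of your second paragraph tries to prove something that is false for the given Lagrangian. Take $G=\C^\times$ acting on $V=\C^2$ with weights $(1,1)$ (faithful and $1$-modular), and the closed orbit through $(v,v^*)=((1,0),(0,1))\in N$. Then $H=\{1\}$, $K=\{1,2\}$, and $V^H=V$ is \emph{not} stable. This is precisely your ``delicate case'' with $\eta_1=\eta_2=1>0$ on $I=\{1\}$ and $J=\{2\}$: the point is already a Kempf--Ness point, both pairings you invoke vanish identically (the second because $I\cap J=\emptyset$), and $1$-modularity holds, yet nothing forces $\eta_i=0$. No refinement of that case analysis can work, because the conclusion requires changing the Lagrangian (here to $\C x_1\oplus\C\xi_2$, with weights $(1,-1)$). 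Your third paragraph does not repair this. The instruction ``choose the remaining signs so that the $\varepsilon_i\chi_i$ positively span'' asserts what must be proved; that a consistent sign choice exists, compatible with the constraints already imposed on $I\cup J$, is the content of the paper's Lemma \ref{lem:closed-orbit-V} and Proposition \ref{prop:make.stable}, established by induction on dimension through slice representations. Moreover, even granting that $V'$ is stable, the claim that ``the stability proof of $V^H$ carries over'' fails: for $G=(\C^\times)^2$ with weights $(1,0),(1,0),(-1,1),(-1,-1)$, the module $V$ is already stable, faithful and $1$-modular, and $H=\{1\}\times\C^\times$ is the isotropy group of the closed orbit through the point with $x_1=\xi_2=1$ and all other coordinates zero; yet $V^H$ has weights $(1,0),(1,0)$ and is unstable. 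So stability of the ambient Lagrangian does not imply stability of $V^H$: the signs on the $H$-fixed part and on its complement must be chosen compatibly. The paper's proof is organized around exactly this point: it first produces, UTCLS, a stable $G$-submodule whose principal isotropy group is $H$ (Lemma \ref{lem:i.groups} applied along a maximal chain of isotropy groups, with an induction), then enlarges it to a stable Lagrangian containing that submodule (Proposition \ref{prop:make.stable}(2)), and finally deduces stability of $V^H$ from Remark \ref{rem:H-fixed-points}. Your proposal is missing this coordination step, and its replacement (the moment-map pairing argument) does not hold.
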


\begin{remark}\label{rem:rational-sings}
From the above,
Lemma \ref{lem:smooth=principal}  and
\cite[Theorem 2.11]{HerbigSchwarzSeaton3} we see that $N$
is a stable $G$-variety with
rational singularities.
\end{remark}

For the rest of this section we assume that $G$ is quasi-toral and that $V$ is a   faithful $1$-modular $G$-module with shell $N$.

Let $N_\sing$ denote the singular locus of $N$.
We say that \emph{the shell $N$ is minimal\/} or that \emph{the $G$-module $V$ is minimal\/} if $\codim_N N_\sing\geq 4$. Minimality  plays  an important role in this work and has many nice properties.
The first is the following, which is a consequence of Theorem \ref{thm:two}.

\begin{mainthm}\label{mainthm:4}
Suppose that
$N$ is   minimal.
Then $\dim G$ (as well as $\dim V$) is a topological invariant of the set of smooth points $(N\git G)_\sm$ of $N\git G$.
\end{mainthm}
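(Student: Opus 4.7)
The strategy is to exhibit two independent topological invariants of $(N\git G)_\sm$ that jointly determine $\dim V$ and $\dim G$, and then to solve the resulting linear system.

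The first invariant is the complex dimension of the smooth locus itself, which is manifestly topological. I compute it explicitly: since $V$ is $1$-modular, the shell $N$ is a complete intersection in $U = V \oplus V^*$ cut out by the $\dim G$ components of $\mu$, whence $\dim N = 2\dim V - \dim G$. Since $V$ is stable (Remark \ref{rem:rational-sings}) and the principal isotropy on $N$ is finite by faithfulness and Theorem \ref{mainthm:1}, the generic fibre of the quotient map $N \to M$ has dimension $\dim G$, so
\[
\dim_{\C}(N\git G)_\sm \;=\; \dim M \;=\; 2\dim V - 2\dim G.
\]
This furnishes one linear relation between $\dim V$ and $\dim G$ that is recoverable from $(N\git G)_\sm$ as a topological space.

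For a second invariant, my plan is to exploit the canonical $\C^\times$-action on $U$ that scales $V$ and $V^*$ with the same weight. Since $\mu$ is homogeneous of degree two in this grading, the action preserves $N$ and descends to a contracting $\C^\times$-action on $M$ with unique fixed point $0 \in M$. Minimality of $N$ should propagate to the bound $\codim_M M_\sing \geq 2$ (established via the Luna stratification: singular loci in the quotient come from orbit-type strata in $N$, and $\codim_N N_\sing \geq 4$ controls their descent); granting this, $0 \in M$ is singular whenever $G \neq 1$, so the $\C^\times$-action restricts to a free action on $M_\sm \subset M\setminus\{0\}$. The topological quotient $M_\sm/\C^\times$ is then an open subset of the projectivization $\mathbb{P}(M)$, and its cohomology — computable from the pair $(\mathbb{P}(U),\mathbb{P}(N))$ by equivariant methods applied to a complete intersection of $\dim G$ quadrics in $\mathbb{P}^{2\dim V - 1}$ — encodes $\dim V$ and $\dim G$ independently. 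This is the content I expect Theorem \ref{thm:two} to supply, giving the needed second linear relation.

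The main obstacle is the second step in two respects: first, verifying $\codim_M M_\sing \geq 2$ directly from $\codim_N N_\sing \geq 4$, which requires understanding how the singular locus behaves under the GIT quotient restricted to the stable locus; and second, extracting the $\C^\times$-equivariant cohomological data from purely topological invariants of $M_\sm$ (one natural route is via the Gysin sequence of the principal $\C^\times$-bundle $M_\sm \to M_\sm/\C^\times$ combined with the end-topology of $M_\sm$ near $M_\sing$). Once both equations are in hand, inverting the resulting $2\times 2$ linear system recovers $\dim V$ and $\dim G$ from the topology of $(N\git G)_\sm$ alone.
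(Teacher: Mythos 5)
Your first invariant is correct and is essentially how the paper recovers $\dim V$ once $\dim G$ is known: by stability (UTCLS) and FPIG on $N$, the generic fiber of $N\to M$ has dimension $\dim G$, so $\dim_\C M_\sm=2\dim V-2\dim G$. The problem is the second invariant, which is the part of the argument that actually has to detect $\dim G$, and there your plan is both incomplete (as you acknowledge) and pointed at the wrong mechanism. The paper's Theorem \ref{thm:two} extracts $\ell=\dim G$ from the homotopy exact sequence of the orbit fibration $G\to N_\pr\to M_\pr$ (with $M_\sm=M_\pr$ by Lemma \ref{lem:smooth=principal}(4)): the fiber contributes $\rank\pi_1(G)=\ell$, and the connecting map $\pi_2(M_\pr)\to\pi_1(G)$ is what makes $\ell$ visible in $\rank\pi_2(M_\sm)$. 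To close the rank count one also needs $\rank\pi_1(N_\pr)=\rank\pi_2(N_\pr)$, which the paper obtains from the fibration $\C^\times\to N^*\to X=N^*/\C^\times$ (where $\pi_1(X)=0$ and $\pi_2(X)=\Z$ by Lefschetz-type results) together with the Goresky--MacPherson theorem applied to $X\smallsetminus Z\to X$; this is exactly where minimality, i.e.\ $\codim_X Z\geq 4$, enters. Your route never quotients by $G$ at the level of homotopy or cohomology, so it has no channel through which $\ell$ can appear.

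Concretely: the cohomology you propose to compute from the pair $(\P(U),\P(N))$, with $\P(N)$ a complete intersection of $\dim G$ quadrics, is the cohomology of $X=\P(N)$, which is \emph{not} a topological invariant of $M_\sm$; the space $M_\sm/\C^\times$ is (an open part of) a further quotient of $X$ by $G$ modulo scalars, and relating its topology back to that of $X$ requires precisely the $G$-fibration argument you are trying to avoid. Moreover, by the Lefschetz hyperplane theorem the accessible (below-middle) cohomology of such a complete intersection records only $\dim N$, and the singularity of $\P(N)$ and the removal of the non-principal locus obstruct any naive reading of the number of defining quadrics. Two smaller inaccuracies: the $\C^\times$-action on $M_\sm$ need not be free (generators of $\C[N]^G$ have assorted degrees, so roots of unity can have fixed points, and the quotient is at best a weighted projectivization); and the Gysin-sequence step you sketch is not carried out. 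As written, the proposal establishes one of the two needed linear relations and leaves the one that determines $\dim G$ unproved.
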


Let $H$ be the isotropy group of a closed orbit in $N$. There is an associated  symplectic slice representation $(S,H)$ of $N$
(see \S \ref{sec:background}).
Then $S$ is a symplectic $H$-module which we may write as  $S^H\oplus S_0$ where $S^H$ and $S_0$ are symplectic $H$-modules. Moreover,  $S_0\simeq W_0\oplus W_0^*$ for a $1$-modular faithful $H$-module $W_0$. Let  $N_0$ denote the corresponding shell. We say that $(S,H)$ is of \emph{type O\/} if $\dim H>0$ and
$\dim N_0\git H=2$ (in which case $N_0\git H$ is well-known to be an orbifold, see   Corollary \ref{cor:reduce.N_0}).  We have the following two results (Proposition \ref{prop:codim}, Lemma \ref{lem:strata}, Corollaries \ref{cor:codim} and \ref{cor:matrixA}).

\begin{mainthm}\label{mainthm:5}
We always  have  $\codim_NN_\sing\geq 3$. There is equality if and only if  $N$ has a symplectic slice representation of type O.
\end{mainthm}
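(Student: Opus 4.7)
The plan is to reduce, via the symplectic slice theorem, to a question about the shell $N_0$ of a slice representation $(W_0, H)$, and then to exploit the combinatorics of the weight matrix of a $1$-modular quasi-toral module.

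First, I would invoke the symplectic slice theorem at a point of $N$ with closed $G$-orbit, isotropy $H$, and symplectic slice representation $(S, H) = (S^H \oplus (W_0 \oplus W_0^*), H)$: an étale neighborhood in $N$ is isomorphic to the associated bundle $G \times_H (N_0 \times S^H)$, where $N_0$ is the shell of the faithful $1$-modular $H$-module $W_0$. Since the $G/H$-fiber and $S^H$ are smooth, the singular locus in this chart is $G \times_H ((N_0)_\sing \times S^H)$, so the codimension of $N_\sing$ along the stratum of orbit type $(H)$ equals $\codim_{N_0}(N_0)_\sing$. By Lemma \ref{lem:strata}, it then suffices to analyze the quantities $\codim_{N_0}(N_0)_\sing$ as $(S,H)$ ranges over the symplectic slice representations of $N$.

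Second, for the lower bound $\codim_N N_\sing \geq 3$, I would apply Proposition \ref{prop:codim}, which I expect controls the codimensions of the isotropy strata inside a $1$-modular shell in terms of the weight matrix $A$ of the $H^0$-action on $W_0$. The principal stratum yields smooth points of $N_0$; for any stratum with positive-dimensional isotropy, the $1$-modularity of $W_0$ forces sufficient independence of the weights that the stratum has codimension at least $3$ in $N_0$. Assembling this across all slices via the first step yields the claimed bound.

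Third, for the equivalence, I would argue via Corollary \ref{cor:matrixA}. For the forward direction, if $\codim_N N_\sing = 3$, pick a slice realizing this minimum; then $\codim_{N_0}(N_0)_\sing = 3$, which by the extremal case of the matrix criterion forces $\dim N_0 \git H = 2$, i.e., type O. For the converse direction, assume some slice is of type O, so $\dim W_0 = \dim H + 1$ and $\dim N_0 = \dim H + 2$; a direct dimension count on the stratum of points fixed by some nontrivial one-parameter subgroup of $H^0$ (using the genericity forced by $1$-modularity on the columns of $A$) shows $(N_0)_\sing$ has dimension $\dim H - 1$, giving codimension exactly $3$.

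The main obstacle, once the slice-theoretic reduction is in place, lies in the combinatorial matrix analysis underlying Proposition \ref{prop:codim} and Corollary \ref{cor:matrixA}: showing both that every nontrivial isotropy stratum in a $1$-modular shell contributes codimension at least $3$, and identifying the sharp case as equivalent to a $2$-dimensional slice quotient. A priori, one might worry that a degenerate non-type-O weight configuration could contribute codim-$3$ singularities, so ruling this out requires a careful case analysis of how sub-supports of the columns of $A$ interact with the $1$-modularity hypothesis.
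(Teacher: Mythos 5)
Your overall strategy is the paper's: reduce via the symplectic slice theorem to the slice shells $N_0$ and then invoke the codimension estimates of Proposition \ref{prop:codim} together with Lemma \ref{lem:strata} and Corollary \ref{cor:codim}. The lower bound $\codim_N N_\sing\geq 3$ goes through as you describe.

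There is, however, a genuine gap in your forward direction of the equivalence, caused by conflating two different quantities. Lemma \ref{lem:strata}(2) computes $\codim_N(N_\sing\cap N^{(H)})=\codim_{N_0}\bigl((N_0)_\sing\cap\NN(N_0)\bigr)$, i.e.\ only the part of $(N_0)_\sing$ lying in the null cone is relevant to the stratum $N^{(H)}$; you replace this throughout by $\codim_{N_0}(N_0)_\sing$. Your key inference --- ``$\codim_{N_0}(N_0)_\sing=3$ forces $\dim N_0\git H=2$, i.e.\ type O'' --- is then false for the slice you chose. Concretely, take $H=(\C^\times)^2$ acting on $W_0=\C^4$ with weight matrix $\left(\begin{smallmatrix}-1&1&0&0\\0&0&-1&1\end{smallmatrix}\right)$: here $N_0=N_{H_1}\times N_{H_2}$ is a product of two $3$-dimensional shells, each singular exactly at its origin, so $\codim_{N_0}(N_0)_\sing=3$, yet $\dim N_0\git H=4$ and $(W_0\oplus W_0^*,H)$ is not of type O (the type-O slice lives one level deeper, for a $1$-dimensional subtorus). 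So from $\codim_N N_\sing=3$ and your chart computation at an arbitrary ``slice realizing the minimum'' you cannot conclude that \emph{that} slice is of type O. The repair is exactly what the paper's bookkeeping provides: take an irreducible component $C$ of $N_\sing$ of codimension $3$, let $(H)$ be the isotropy class whose stratum is dense in $C$ (so $\dim H>0$ and the relevant piece of $(N_0)_\sing$ \emph{is} contained in $\NN(N_0)$), and then the contrapositive of Proposition \ref{prop:codim}(2) forces $\dim W_0-\dim H=1$, i.e.\ type O; alternatively, keep your version but add an induction on $\dim H$ using transitivity of slice representations. Also note that Corollary \ref{cor:matrixA} characterizes the \emph{existence} of a type-O slice in terms of the weight matrix; it is not an ``extremal case'' criterion that converts a codimension-$3$ singular locus of $N_0$ into $\dim N_0\git H=2$.
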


\begin{mainprop}\label{mainprop:1}
Let $G$ be a torus of rank
$\ell>0$ and let $V$ be a faithful $1$-modular $G$-module of dimension $n$ with associated matrix $A$ and shell $N$.  Then $N$ has a symplectic slice representation of type O if and only if the following holds.

\smallskip
\noindent $ (*) $  \quad There is an $r\in\N$, $1\leq  r\leq \ell$, and $n-r-1$ columns of $A$ of rank $\ell-r$.
\end{mainprop}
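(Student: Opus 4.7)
The plan is to match symplectic slice representations of type O at closed orbits of $N$ with subsets $J_0\subset\{1,\ldots,n\}$ satisfying $(*)$. Fix a closed orbit $G\cdot x\subset N$ whose isotropy has identity component a subtorus $H\subseteq G$ of rank $r$; set $\lie h:=\operatorname{Lie}(H)$ and $J_0:=\{\,i: a_i|_{\lie h}=0\,\}$. Then $V^H=\Span\{e_i:i\in J_0\}$, and the slice theory for torus actions on shells recalled in \S\ref{sec:background} shows that the non-fixed part of the symplectic slice at $x$ decomposes as $S_0\simeq W_0\oplus W_0^*$, where $W_0\simeq V/V^H$ is a faithful $H$-module of dimension $n-|J_0|$ whose $r$-row weight matrix is obtained by restricting the columns $\{a_i\}_{i\in J_0^c}$ along $\lie g^*\twoheadrightarrow\lie h^*$. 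Inheritance of $1$-modularity to $W_0$ then yields the dimension formula $\dim N_0\git H = 2(n-|J_0|)-2r$.

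For the necessity of $(*)$, if $(S,H)$ is of type O then $r\geq 1$ and $\dim N_0\git H=2$, which immediately forces $|J_0|=n-r-1$. Since $\lie h$ is by construction the simultaneous kernel in $\lie g$ of $\{a_i\}_{i\in J_0}$, one has $r=\dim\lie h=\ell-\rank A_{J_0}$, i.e.\ $\rank A_{J_0}=\ell-r$, proving $(*)$. For sufficiency, given $J_0$ of size $n-r-1$ with $\rank A_{J_0}=\ell-r$ and $1\leq r\leq\ell$, set $\lie h:=\bigcap_{i\in J_0}\Ker(a_i)$ (of dimension $r$) and let $H\subseteq G$ be the corresponding rank-$r$ subtorus. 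The submodule $V_{J_0}:=\Span\{e_i:i\in J_0\}$ equals $V^H$, and the effective action of the rank-$(\ell-r)$ quotient torus $G/H$ on it has weight matrix the full-rank $A_{J_0}$, hence is faithful, and inherits $1$-modularity from $V$. The sub-shell $(V_{J_0}\oplus V_{J_0}^*)\cap N$ coincides with the symplectic quotient shell of $(G/H,V_{J_0})$, and by the general theory of faithful $1$-modular torus modules it contains closed $(G/H)$-orbits with trivial identity component of stabilizer (see Theorem~\ref{mainthm:1} and the structural results of \S\ref{sec:background}). Any such orbit is a closed $G$-orbit in $N$ with isotropy of identity component $H$; its symplectic slice satisfies $\dim W_0=n-|J_0|=r+1$, hence $\dim N_0\git H=2$, realizing type O.

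The necessity direction is essentially bookkeeping with the torus slice decomposition. The main obstacle lies in the sufficiency direction, specifically in justifying the inheritance of $1$-modularity from $(G,V)$ to the restricted effective $(G/H)$-module $V_{J_0}$, and in invoking the existence of closed orbits with the required principal isotropy inside the sub-shell. Once this inheritance is in hand, the existence of the closed orbit and the type-O character of its symplectic slice follow from the dimension counts already established, making the equivalence of $(*)$ with the geometric condition a direct consequence of the slice decomposition machinery.
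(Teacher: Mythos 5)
Your necessity direction is sound and is essentially the paper's route (Theorem \ref{thm:i.groups} plus Lemma \ref{lem:H-type-O}): for an honest symplectic slice representation at a closed orbit, Lemma \ref{lem:smooth=principal}(3) guarantees that $W_0$ is $1$-modular and (UTCLS) stable with FPIG, so the count $\dim N_0\git H=2\dim W_0-2\dim H$ is legitimate and forces $|J_0|=n-r-1$, $\rank A_{J_0}=\ell-r$. The sufficiency direction, however, has a genuine gap, and it is not just the missing verification you flag at the end: the claim that the effective $(G/H)$-module $V_{J_0}$ \emph{inherits} $1$-modularity from $V$ is false in general. One-modularity of $V$ says every $n-1$ columns of $A$ have rank $\ell$, hence every $n-r-2$ columns have rank at least $\ell-r-1$; but $1$-modularity of $(V_{J_0},G/H)$ needs every $n-r-2$ columns of $A_{J_0}$ to have rank $\ell-r$, one more than this bound provides. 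Concretely, take $\ell=3$, $n=5$ and $A=\bigl(\begin{smallmatrix}1&2&0&0&1\\0&0&1&0&1\\0&0&0&1&-1\end{smallmatrix}\bigr)$, which is faithful and $1$-modular. Then $(*)$ holds with $r=1$ and $J_0=\{1,2,3\}$, $\lie h=\Span\{(0,0,1)\}$, but the $(G/H)$-module $V_{J_0}$ has weight matrix $\bigl(\begin{smallmatrix}1&2&0\\0&0&1\end{smallmatrix}\bigr)$, which is not $1$-modular. Correspondingly, the closed orbit you need does not exist: any point of $N$ whose isotropy contains $H^0$ is supported in the coordinates $x_1,x_2,x_3,\xi_1,\xi_2,\xi_3$, where the moment map equations force $x_3\xi_3=0$; one then checks that the cone generated by the supporting weights is never the full plane annihilating $\lie h$, so every closed orbit there has isotropy of rank at least $2$. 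Thus $H$ is not a slice isotropy group at all, and your final dimension count has nothing to apply to.

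The fix is exactly the step your argument omits: a maximality reduction. In the example above, $(*)$ also holds with $r=2$ and $J_0'=\{1,2\}$, and the corresponding rank-$2$ subtorus does carry a type-O slice; in general the paper first replaces $(W,H)$ by a maximal pair (enlarging $H$ and shrinking $J_0$ if necessary) and then proves Corollary \ref{cor:maximal}, which shows that maximality forces $(V^H,G/H)$ to be $1$-modular and $H$, up to finite extension, to be the isotropy group of a type-O slice. That corollary, not bookkeeping, is the substantive content of the sufficiency direction, and it cannot be replaced by the inheritance claim as you have stated it.
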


If $\codim_NN_\sing= 3$, then Theorem \ref{mainthm:4} does not hold; counterexamples are given in \cite{HerbigLawlerSeaton20}. However,
we have the following (Theorem  \ref{thm:main3}).

\begin{mainthm}\label{mainthm:6}
Let $G$ be quasi-toral and $V$ a $1$-modular faithful $G$-module with shell $N$. There is a linear subspace $V'\subset V$ with the following properties.
\begin{enumerate}
\item Let  $G'\subset G$ be the stabilizer of $V'$. Then $V'$ is a $1$-modular
 faithful  $G'$-module.
\item There is a $G'$-equivariant  inclusion $N':=N_{V'}\to N$ inducing
a Poisson
isomorphism $N'\git G'\simeq N\git G$.
\item $N'$ is minimal.
\end{enumerate}
If $V$ is a stable $G$-module, then $V'$ is a stable $G'$-module.
\end{mainthm}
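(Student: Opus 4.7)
The plan is to argue by induction on $\dim V + \dim G$, with base case $N$ minimal: take $V'=V$ and $G'=G$, so (1)--(3) hold trivially. When $N$ is not minimal, Theorem~\ref{mainthm:5} guarantees a symplectic slice representation of type O at a closed orbit $G\cdot x$ with stabilizer $H$ of positive dimension. I would first treat the torus case $G=T$: Proposition~\ref{mainprop:1} then pinpoints an integer $r\in\{1,\ldots,\ell\}$ together with $n-r-1$ columns of the weight matrix $A$ of rank $\ell-r$. I would set $V_1\subset V$ to be the sum of the weight spaces indexed by these columns (equivalently $V^K$, where $K\subset T$ is the $r$-dimensional subtorus annihilating them) and take $G_1\subset T$ to be a complementary subtorus of rank $\ell-r$ acting faithfully on $V_1$.

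Next I would verify the local data. The subspace $V_1$ is a faithful $G_1$-module by the complementarity of $G_1$ and $K$, and it is $1$-modular because the $r+1$ removed columns contribute exactly $r$ independent moment-map relations via the type O structure, matching the dimension drop $\dim T - \dim G_1 = r$. Since the $T$-moment map $\mu(z,w)=\sum a_i z_i w_i$ restricted to $V_1\oplus V_1^*$ takes values in the span of the retained weights in $\lie{t}^*$, naturally identified with $\lie{g}_1^*$, one has $N_{V_1}=N\cap (V_1\oplus V_1^*)$, which gives the $G_1$-equivariant inclusion $N_1\hookrightarrow N$ of (2).

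The main obstacle is showing that $N_1\git G_1\to N\git G$ is a Poisson isomorphism. My strategy is to prove that the restriction map $\C[N]^T\to\C[N_1]^{G_1}$ is bijective; Poisson compatibility is automatic because $V_1\oplus V_1^*$ is a symplectic subspace of $V\oplus V^*$. For surjectivity I would argue that any $T$-invariant weight-zero monomial on $V\oplus V^*$ can, modulo the moment ideal, be rewritten using only coordinates in $V_1\oplus V_1^*$: the $r+1$ excluded coordinate pairs satisfy $r$ independent moment-map relations together with the single extra relation supplied by the type O structure, which combinatorially just suffices to eliminate them. Injectivity reduces to showing that every closed $T$-orbit in $N$ meets $V_1\oplus V_1^*$, which I would derive from the slice analysis by flowing the excluded directions to zero within the orbit closure. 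This combinatorial elimination, together with preservation of normality in the quotient, is the step I expect to demand the most care.

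For general quasi-toral $G$, I would pick a specific type O slice (breaking any $G/G^0$-symmetry among the candidates) and run the torus reduction on $G^0=T$ with respect to that slice; the resulting $V_1$ is a $G^0$-submodule, and its setwise stabilizer $G'\subset G$ contains $G^0$ together with the elements of $G/G^0$ that preserve the chosen slice data. Since each reduction strictly decreases $\dim V + \dim G$, iteration terminates, and the composite subspace and iterated stabilizer provide the final $V'$, $G'$ satisfying (1)--(3). For the stability clause, if a generic point of $V$ has closed $G$-orbit with trivial stabilizer, then a generic point of $V_1=V^K$ lies on a closed $G$-orbit, whose restriction to $G_1\subset G$ is again a closed $G_1$-orbit in $V_1$ with at worst finite stabilizer (since $G_1\cap K$ is finite); stability therefore propagates through each step of the iteration.
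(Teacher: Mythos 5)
There is a genuine gap, and it is in the core construction. You propose to delete the type~O slice module entirely: taking $V_1=V^{K}$ to be the span of the $n-r-1$ distinguished columns and forgetting the remaining $r+1$ coordinates. But the type~O piece does not quotient to a point. Writing $V=V^H\oplus W$ with $\dim W=\dim H+1$ as in the paper, the invariants of the shell $N_H\subset W\oplus W^*$ are generated by $f(\vec x)=\prod x_i^{m_i}$, $f(\vec\xi)$ and $h=\sum x_i\xi_i$, and $N_H\git H$ is a two-dimensional orbifold $N_0/F$ with $F\simeq\Z/m\Z$ (Corollary \ref{cor:reduce.N_0}); by definition of type~O it has dimension exactly $2$. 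So your claimed combinatorial elimination --- that the $r+1$ excluded coordinate pairs can be rewritten modulo the moment ideal in terms of the retained ones --- fails: $f(\vec x)$ and $f(\vec\xi)$ restrict to zero on $V_1\oplus V_1^*$ but are not in the moment ideal, so $\C[N]^T\to\C[N_1]^{G_1}$ is not injective, and likewise a principal closed orbit with all $W$-coordinates nonzero never meets $V_1\oplus V_1^*$. The simplest counterexample is $G=\C^\times$ on $V=\C^2$ with weights $(-1,1)$: condition $(*)$ holds with $r=1$ and the empty set of columns, your recipe gives $V_1=0$ and a point, yet $N\git G\simeq\C^2/(\Z/2)$. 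More generally the dimensions cannot match, since $\dim N\git G=2\dim V-2\dim G$ drops by $2(r+1)-2r=2$ under your reduction.

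The correct reduction, which is what Theorem \ref{thm:main2} does, replaces $W$ not by $0$ but by the one-dimensional subspace $W_D=\C\cdot(\sqrt{m_1},\dots,\sqrt{m_k})$, on which a residual group still acts: either the finite cyclic group $F=\rho\inv(D)\simeq\Z/m\Z$ (when $\rho(G)=\rho(H^0)$, so $G\simeq H\times K$ and the type~O factor survives as the orbifold $N_0/F$), or a one-dimensional subtorus $R$ with $\rho(R)=D$ acting by scalars (when $\rho(G)=T$, giving $G''=RK$ and $V''=W_D\oplus V^H$). So $\dim V$ drops by $r$, not $r+1$, and $\dim G$ drops by $r$ as well, keeping $\dim N\git G$ fixed. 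Your reduction of the general quasi-toral case to the torus case and the induction on a type~O slice are the right skeleton, and your stability remark is in the right spirit, but without the $W_D$ remnant the central isomorphism in (2) is false.
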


We know that $\dim N=2\dim V-\dim G$ \cite[Proposition 3.2 (i)]{HerbigSchwarzSeaton2}.
Using  Theorem \ref{mainthm:4}, we see that $N$ is minimal if and only if  $\dim V$, $\dim G$ and $\dim N$ are minimal among triples $(V',G',N')$ with  $N'\git G'\simeq N\git G$. In particular, we have the following answer to the orbifold question.

\begin{maincor}\label{maincor:1}
Suppose that $N$ is minimal. The quotient $N\git G$ is an orbifold if and only if $G$ is finite.
\end{maincor}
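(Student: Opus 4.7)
The plan is to deduce the corollary immediately from the dimension-minimality characterization stated in the paragraph right before it, which itself rests on Theorem \ref{mainthm:4}: namely, $N$ is minimal if and only if $\dim V$, $\dim G$ and $\dim N$ are simultaneously minimal among triples $(V'',G'',N'')$ giving a shell quotient isomorphic to $N\git G$.

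For the easy direction, I would suppose $G$ is finite. Then $\lieg=0$, the moment map $\mu$ is identically zero, $N=V\oplus V^*$ equals the entire symplectic vector space, and $N\git G$ is a vector space modulo a finite group, which is an orbifold in the sense made explicit in the introduction (``$M\simeq M'$ where $G'$ is finite'').

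For the converse, I would assume $N$ is minimal and $N\git G$ is an orbifold. Per the paper's definition of orbifold, there exist a finite group $G'$ and a $G'$-module $V'$ with $N'\git G'\simeq N\git G$. After replacing $G'$ by its quotient modulo the kernel of the action on $V'$, which does not alter the categorical quotient, I may assume $V'$ is faithful. Since the Lie algebra of $G'$ is zero, the associated moment map is identically zero, $N'=V'\oplus (V')^*$ is the entire symplectic vector space (hence a trivial complete intersection), and $V'$ is automatically $1$-modular. Thus $(V',G',N')$ is a valid triple in the sense of the preceding paragraph, and the minimality of $\dim G$ forces $\dim G\leq\dim G'=0$. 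Since $G$ is quasi-toral, $G^0$ is then a torus of dimension $0$, hence trivial, so $G$ is finite.

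The substantive content is bundled into the preceding minimality paragraph (via Theorem \ref{mainthm:4} together with the formula $\dim N=2\dim V-\dim G$); no additional obstacle arises at the level of the corollary beyond the routine verification that a finite group equipped with a faithful module furnishes a valid triple, which is automatic because $\lieg=0$ makes the moment map identically zero with complete-intersection zero fiber equal to the full symplectic vector space.
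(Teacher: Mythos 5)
Your proposal is correct and matches the paper's own (implicit) argument: the corollary is deduced exactly as you do, from the fact that for a minimal shell $\dim G$ is minimal among all triples $(V'',G'',N'')$ with $N''\git G''\simeq N\git G$ (Theorem~\ref{mainthm:4} / Corollary~\ref{cor:rank.G}), together with the observation that a finite group with a faithful module yields a valid minimal triple since its shell is the full smooth symplectic vector space. Your explicit verification that such a triple is faithful, $1$-modular (vacuously, as $\dim G'=0$) and minimal is the only content beyond what the paper states, and it is carried out correctly.
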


Let $\Phi$ be an automorphism of $N$ and let $\sigma$ be an automorphism of $G$. We say that $\Phi\in\Aut(N)$ is \emph{$\sigma$-equivariant\/} if $\Phi\circ g=\sigma(g)\circ \Phi$ for all $g\in G$. Echoing \cite[Theorem 1.12, Remark 1.15]{GWSQuotients} we have the following two results (Corollary \ref{cor:lifting}, Proposition \ref{prop:lifting} and Theorem \ref{thm:isomorphism}).
\begin{mainthm}\label{mainthm:2}
Suppose that   $N$ is minimal.
Let $\phi$ be an algebraic automorphism of $N\git G$. Then $\phi$ lifts to an algebraic automorphism  of $N$ and any such lift is $\sigma$-equivariant for some $\sigma\in\Aut(G)$.
\end{mainthm}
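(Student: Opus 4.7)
My proof plan follows \cite[Theorem 1.12 and Remark 1.15]{GWSQuotients}, splitting into (a) the existence of a lift $\Phi\in\Aut(N)$ of $\phi$ (corresponding to Corollary~\ref{cor:lifting} and Proposition~\ref{prop:lifting}) and (b) the automatic $\sigma$-equivariance of any such lift (corresponding to Theorem~\ref{thm:isomorphism}).

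For (a), I first note that $N$ is a normal complete intersection: it is Cohen--Macaulay with rational singularities (Remark~\ref{rem:rational-sings}) and regular in codimension one by the minimality hypothesis. Let $N_{pr}\subset N$ denote the union of closed $G$-orbits with isotropy conjugate to the principal isotropy $H$, which exists UTCLS by Theorem~\ref{mainthm:1}. Luna's \'etale slice theorem realizes $\pi\co N_{pr}\to (N\git G)_{pr}$ as the geometric quotient by a generically free $G/H$-action. By Theorem~\ref{mainthm:4}, $\dim G$, and hence the stratification, is detectable from $N\git G$ itself, so $\phi$ preserves $(N\git G)_{pr}$, and the pullback $\phi^{*}N_{pr}$ is a $G/H$-torsor over $(N\git G)_{pr}$. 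I would show that $\phi^{*}N_{pr}\simeq N_{pr}$ as torsors, which provides a $G/H$-equivariant lift $\Phi_{pr}$ on the principal stratum. Minimality combined with Proposition~\ref{prop:codim} gives $\codim_N(N\setminus N_{pr})\ge 2$, and algebraic Hartogs extension then produces the desired $\Phi\in\Aut(N)$ with $\pi\circ\Phi=\phi\circ\pi$.

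For (b), suppose $\Phi\in\Aut(N)$ is any lift. For each $g\in G$, the automorphism $\Phi\circ g\circ\Phi^{-1}$ acts as the identity on $N\git G$ and thus lies in the relative automorphism group scheme $\mathcal{A}=\underline{\Aut}_{N\git G}(N)$. Faithfulness of $V$ embeds $G$ in $\mathcal{A}$, and the Luna slice picture at a generic closed orbit identifies the identity component $\mathcal{A}^{0}$ with $G^{0}$. Because $G/G^{0}$ is finite, conjugation by $\Phi$ sends $G$ into $G$ and so defines the required algebraic automorphism $\sigma\in\Aut(G)$ with $\Phi\circ g=\sigma(g)\circ\Phi$.

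The main obstacle will be the torsor step in (a): showing $\phi^{*}N_{pr}\simeq N_{pr}$ as $G/H$-torsors over $(N\git G)_{pr}$. The quasi-toral hypothesis makes this tractable by reducing the classification of such torsors to questions about Picard groups and finite \'etale covers of $(N\git G)_{pr}$, both of which are respected by $\phi$. Minimality plays two roles: in Theorem~\ref{mainthm:4} to recognize the principal stratum intrinsically from $N\git G$, and in producing the codimension estimate that makes algebraic Hartogs extension applicable.
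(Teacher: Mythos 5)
Your overall architecture (lift over the principal stratum, extend across a codimension-two complement, then deduce equivariance from the relative automorphism group) is in the spirit of \cite{GWSQuotients}, which is also the paper's model, but both halves have gaps at exactly the points where the paper has to do real work.

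For the existence of the lift: the step you yourself flag as the main obstacle --- that $\phi^{*}N_{\pr}\simeq N_{\pr}$ as torsors --- is the entire content, and ``Picard groups and finite \'etale covers are respected by $\phi$'' does not resolve it: $\phi$ respects these invariants of the base, but you must show that $\phi^{*}$ fixes the \emph{particular class} of $N_{\pr}$, which is not automatic. The paper's route supplies the two decisive facts your sketch never invokes: $N$ is factorial and $\C[N]^{*}=\C^{\times}$ (Proposition \ref{prop:good}(5),(6)); these are what make the principal bundle recoverable from the quotient and are the hypotheses under which \cite[Prop.~5.1, 5.2]{GWSQuotients} apply. The paper also first proves that any automorphism of $M$ permutes \emph{all} strata (Corollary \ref{cor:permute.comps}), obtained by lifting vector fields: surjectivity of $\pi_{*}\colon\A(N)^{G}\to\A(M)$ (Lemma \ref{lem:pi_*.surjective}, which itself needs a Cohen--Macaulay plus codimension-$\geq 4$ argument to kill an $H^{1}$ obstruction) together with Lemma \ref{lem:tan.spaces}. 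None of this appears in your plan. Two smaller points: $\phi$ preserves $M_{\pr}$ because $M_{\pr}=M_{\sm}$ (Lemma \ref{lem:smooth=principal}(4)), not via Theorem \ref{mainthm:4}; and when the principal isotropy group $H$ is a nontrivial finite group (possible for quasi-toral $G$), $N_{\pr}\to M_{\pr}$ is not a torsor for a quotient group of $G$, so the torsor formalism needs modification.

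For $\sigma$-equivariance: your argument shows at best that $\Phi G^{0}\Phi^{-1}$ is a connected subgroup of the relative automorphism group, and even identifying the identity component of that group with $G^{0}$ secretly uses $\C[M_{\pr}]^{*}=\C^{\times}$ (otherwise nonconstant morphisms $M_{\pr}\to G^{0}$ give extra fiberwise automorphisms). The jump from ``$G/G^{0}$ is finite'' to ``conjugation by $\Phi$ sends $G$ into $G$'' is a non sequitur: the component group of the relative automorphism group scheme is in general strictly larger than $G/G^{0}$ (e.g.\ fiberwise inversion on a torus torsor), so $\Phi g\Phi^{-1}$ for $g\notin G^{0}$ could a priori land outside $G$. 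The paper proceeds differently: using the cone structure it degenerates $\Phi$ to a linear map $\Phi'(0)\in\GL(V\oplus V^{*})$ normalizing $G$, defines $\sigma$ as conjugation by $\Phi'(0)$, and then uses $\C[N]^{*}=\C^{\times}$ to show that the cocycle $\gamma(t,x,g)$ with $\Psi_{t}(gx)=\gamma(t,x,g)\Psi_{t}(x)$ is constant equal to $g$, so $\Phi$ itself is $\sigma$-equivariant. You would need either to reproduce that degeneration argument or to carry out a genuine analysis of the component group of your automorphism group scheme; as written, this half does not close.
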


\begin{mainthm}\label{mainthm:3}
For $i=1$, $2$, let $G_i$ be quasi-toral and $V_i$ a faithful $1$-modular
minimal $G_i$-module with shell $N_i$.
If
$$
N_1\git G_1\simeq N_2\git G_2
$$
as affine varieties, then there is a linear isomorphism
$$
\Gamma\colon V_1\oplus V_1^*\tosim V_2\oplus V_2^*
$$
inducing  isomorphisms of $N_1$ and $N_2$ and of $G_1$ and $G_2$.
\end{mainthm}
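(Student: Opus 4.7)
The plan is to lift the given abstract isomorphism of quotients to an equivariant isomorphism of shells, then exploit the conical nature of the construction to promote it to a linear map of the ambient symplectic vector spaces. This parallels the pattern for automorphisms articulated in Theorem~\ref{mainthm:2}.

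First, apply Theorem~\ref{mainthm:4} to both sides of the given isomorphism $\phi\colon N_1\git G_1 \tosim N_2\git G_2$. Since the smooth loci of the quotients are homeomorphic and this invariant reads off $\dim V$ and $\dim G$, we obtain $\dim V_1 = \dim V_2$, $\dim G_1 = \dim G_2$, and a fortiori $\dim N_1 = \dim N_2$. This dimensional symmetry is what permits a lifting argument to proceed from both directions.

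Next, I would lift $\phi$ to an isomorphism $\Phi\colon N_1\tosim N_2$. Let $N_i^{\circ}\subset N_i$ be the open locus of closed $G_i$-orbits with trivial isotropy; stability, which holds UTCLS by Theorem~\ref{mainthm:1}, makes $N_i^{\circ}$ open and dense. On $N_i^{\circ}$ the quotient map is a principal $G_i$-bundle, so restricting $\phi$ to $(N_1\git G_1)^{\circ}$ and pulling back the $G_2$-bundle structure yields a lift $\Phi^{\circ}\colon N_1^{\circ}\tosim N_2^{\circ}$ equivariant with respect to a locally defined group homomorphism $G_1\to G_2$. Minimality gives $\codim_{N_i}(N_i\setminus N_i^{\circ})\geq 4$, so the same Hartogs-type extension that powers Theorem~\ref{mainthm:2} extends $\Phi^{\circ}$ uniquely to a morphism $\Phi\colon N_1\to N_2$; applying the argument to $\phi^{-1}$ shows $\Phi$ is an isomorphism. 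The local equivariance data glue into a global isomorphism $\sigma\colon G_1\tosim G_2$ satisfying $\Phi(gx)=\sigma(g)\Phi(x)$.

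Finally, invoke the conical structure. The moment map $\mu_i$ is homogeneous of degree~$2$ for the standard scaling on $V_i\oplus V_i^*$, so each $N_i$ is a cone, the origin is the unique scaling-fixed point, and $\C[N_i\git G_i]$ inherits a nonnegative grading whose unique graded maximal ideal must be preserved by $\phi$. After composing $\Phi$ with a suitable scalar we may arrange $\Phi(0)=0$, and then uniqueness of the Hartogs extension forces $\Phi$ to commute with the $\C^{\times}$-action. A $\C^{\times}$-equivariant isomorphism between affine cones $N_i\subset V_i\oplus V_i^*$ is the restriction of a linear isomorphism of the degree-one parts of their coordinate rings; dualizing gives the desired $\Gamma\colon V_1\oplus V_1^*\tosim V_2\oplus V_2^*$. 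Transporting the $\sigma$-equivariance of $\Phi$ and the condition $\Gamma(N_1)=N_2$ gives the required identifications.

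The main obstacle is the lifting step. Theorem~\ref{mainthm:2} as stated is an automorphism result for a fixed shell, and the principal technical work is to check that its Hartogs-and-equivariance argument carries over to isomorphisms between two genuinely distinct minimal shells. The dimension matching supplied by Theorem~\ref{mainthm:4}, the stability from Theorem~\ref{mainthm:1}, and the two-sided codimension bound from minimality are precisely the symmetric ingredients needed. Once $\Phi$ and $\sigma$ are constructed, the passage to the linear $\Gamma$ via the conical structure is comparatively routine.
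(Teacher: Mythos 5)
Your overall strategy (lift to the shells, then linearize using the cone structure) is the right shape, but the two steps you flag as ``routine'' or handle by analogy are exactly where the argument breaks, and the paper uses a specific device to avoid the first problem that you have missed.

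The central gap is the lifting step. You propose to restrict $\phi$ to the principal locus, where $\pi_2\colon N_2^{\circ}\to (N_2\git G_2)^{\circ}$ is (at best, and only when $G_2$ is a torus --- for general quasi-toral $G_2$ the principal isotropy is merely finite, so this is not even a principal bundle) a principal $G_2$-bundle, and to ``pull back the bundle structure'' to obtain $\Phi^{\circ}$. But pulling back a principal bundle along $\phi$ produces the bundle $\phi^{*}N_2^{\circ}$ over $(N_1\git G_1)^{\circ}$; to get a lift you must additionally identify $\phi^{*}N_2^{\circ}$ with $N_1^{\circ}$ as bundles, and there is no a priori reason these two torsors are isomorphic. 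This identification is precisely the hard content of the lifting theory (surjectivity of $\pi_{*}$ on invariant vector fields, strata preservation, factoriality of $N$), and none of it is stated for a map between two \emph{different} shells. The paper sidesteps this entirely: it forms $N=N_1\times N_2$, $G=G_1\times G_2$ (still a faithful $1$-modular minimal module), and applies the \emph{automorphism} lifting results (Corollary \ref{cor:lifting}, Proposition \ref{prop:lifting}) to the order-two swap automorphism $\phi(m_1,m_2)=(\psi^{-1}(m_2),\psi(m_1))$ of $M_1\times M_2$. The resulting $\sigma$-equivariant lift $\Phi$ of this involution sends closed orbits to closed orbits (Kuttler), hence interchanges $N_1\times\{0\}$ and $\{0\}\times N_2$, which is how one extracts the isomorphism $N_1\tosim N_2$. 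Without this reduction, your proposal rests on an unproved extension of Theorem \ref{mainthm:2} to non-identical source and target.

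The second error is in the linearization. Knowing that $\phi$ preserves the graded maximal ideal only gives $\phi(0)=0$; it does not make $\phi$, or its lift $\Phi$, commute with the $\C^{\times}$-scaling, and ``uniqueness of the Hartogs extension'' cannot force this (a lift composed with a higher-order equivariant perturbation is still a lift). The correct move, as in Proposition \ref{prop:lifting}, is to form $\Phi_t=t^{-1}\circ\Phi\circ t$ and pass to the limit $\Phi'(0)=\lim_{t\to 0}\Phi_t$, which \emph{is} linear on $T_0N=V_1\oplus V_1^{*}\oplus V_2\oplus V_2^{*}$ and still $\sigma$-equivariant; its restriction is the desired $\Gamma$. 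Your appeal to Theorems \ref{mainthm:4} and \ref{mainthm:1} for dimension matching and stability is fine but is not where the difficulty lies.
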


In \S \ref{sec:real} we establish analogues of our results for unitary representations $V$ of compact quasi-toral Lie groups $K$, their associated real shells and quotients.

\section*{Acknowledgements}
C.S.~was supported by a Rhodes College Faculty Development Grant from Rhodes College and would like to thank Yi Song, whose undergraduate research project was related to the early stages of this investigation.

\section{Background}\label{sec:background}

For the following see   \cite{LunaSlice}.
Let $G$ be a reductive complex Lie group and $X$ an affine $G$-variety.  The algebra of invariants $\C[X]^G$ is a finitely generated domain with corresponding variety $X\git G$. Dual to the inclusion  $\C[X]^G\to\C[X]$ we have the quotient mapping $\pi_X$ (or just $\pi$) $\colon X\to X\git G$. The variety $Z=X\git G$ parameterizes the closed $G$-orbits in $X$, i.e., each fiber of $\pi$ contains a unique closed orbit.   Let $x\in X$ such that $Gx$ is closed. Then the isotropy group $H=G_x$ is reductive. Let $Z_{(H)}$ denote the points of $Z$ whose corresponding closed orbits have isotropy group in the conjugacy class $(H)$ of $H$. Let $X^{(H)}$ denote the inverse image of $Z_{(H)}$ in $X$. The subvarieties $Z_{(H)}$ are called the \emph{strata of $Z$}. There is a (unique) dense open stratum $Z_\pr$,  the \emph{principal stratum},  with inverse image denoted $X_\pr$ \cite[\S 3]{LunaRichardson}. The closed orbits in $Z_\pr$ are called \emph{principal orbits\/}. If $Z$ is a stable $G$-variety, then all orbits in $Z_\pr$ are closed, i.e., principal. A subset of $X$ is \emph{$G$-saturated\/} if it is the inverse image of a subset of $Z$.

See \cite[Section 2]{HerbigSchwarzSeaton3} for the following. Let $V$ be a faithful $G$-module.
 Let $V_{(r)}=\{v\in V\mid \dim G_v=r\}$. For $k\in \N$ we say that $V$ is \emph{$k$-modular\/}   if $\codim_V V_{(r)}\geq r+k$ for $1\leq r\leq\dim G$. The shell $N$ is a
 complete intersection
 variety if and only if $V$ is $1$-modular, which we now assume.  The null cone $\NN(V)$ (resp.\ $\NN(N)$) is $\pi_V\inv(\pi_V(0))$ (resp.\    $\pi_N\inv(\pi_N(0)))$.  Luna's slice theorem \cite{LunaSlice} shows that the strata of $V\git G$ are smooth and locally closed, and the symplectic slice theorem \cite[Theorem 3.15]{HerbigSchwarzSeaton2}) shows the same thing for the strata of $N\git G$. Let $Gx$ be a closed orbit in $N$ with isotropy group $H=G_x$. There is an associated \emph{symplectic slice representation\/} $(S,H)$, uniquely determined by $H$. There is an $H$-module $W$ (not usually unique) such that  $S\simeq W\oplus W^*$. Let  $N_S$ denote  the  corresponding shell. We have a decomposition  $W=W^H\oplus W_0$
 of $H$-modules so that $N_S\simeq S^H\times N_0$ where $N_0$ is the shell of $W_0$. We say that $(S,H)$   is \emph{proper\/} if $H\neq G$. We say that   $(S,H)$ is \emph{maximal\/} if it is proper and for any other symplectic slice representation $(S',H')$ where $H\subset H'$, either $H'=H$ or $H'=G$.

For $k\geq 0$, we say that $V$ is \emph{$k$-principal} if $\codim  V\setminus V_{\pr} \geq k$.
If $V$ has finite principal isotropy groups, we abbreviate by saying that $V$ \emph{has FPIG}.  If the principal isotropy groups are trivial, we say that $V$ \emph{has TPIG}. We say that $V$ is \emph{$k$-large\/} if it is $k$-principal, $k$-modular and has FPIG.

The singular points of $N$ (resp.\ $N\git G$) are denoted by $N_\sing$ (resp.\ $(N\git G)_\sing$) and the smooth points of $N\git G$ are denoted by $(N\git G)_\sm$.
The standard symplectic structure on $V\oplus V^*$ induces a Poisson algebra structure on $\C[N]$ and on $\C[N]^G\simeq \C[N\git G]$. See \cite[\S 2.1]{HerbigSchwarzSeaton3} and
\cite[discussion after Remark 2.4]{HerbigSchwarzSeaton2}.

\begin{lemma}\label{lem:strata}
Assume that $V$ is a $1$-modular $G$-module with shell $N$. Let  $(S,H)$ be a symplectic slice representation of $N$.
Then
\begin{enumerate}
\item $\codim_NN^{(H)}=\codim_{N_0}\NN(N_0).$
\item $\codim_N (N_\sing\cap N^{(H)})=\codim_{N_0}((N_0)_\sing \cap\NN(N_0)).$
\end{enumerate}
\end{lemma}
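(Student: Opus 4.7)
The plan is to reduce both statements to a computation on the symplectic slice $N_S$ using the symplectic slice theorem. Let $Gx$ be a closed orbit with $G_x = H$ giving rise to $(S,H)$. The symplectic slice theorem produces a $G$-saturated étale neighborhood of $Gx$ in $N$ of the form $G\times_H N_S$, with $[e,0]\mapsto x$. Étale maps preserve codimension and preserve the stratification by conjugacy class of isotropy, so it suffices to identify the preimages of $N^{(H)}$ and $N_\sing\cap N^{(H)}$ inside $G\times_H N_S$ and to compute codimensions there.

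First I identify $N^{(H)}$ in the slice model. A point $[g,s]\in G\times_H N_S$ lies in the $(H)$-stratum iff the (unique) closed $H$-orbit in $\overline{Hs}$ is a single $H$-fixed point of $N_S$. Using $N_S\simeq S^H\times N_0$ and the fact that $S_0=W_0\oplus W_0^*$ has no nonzero $H$-fixed vectors (because $W_0$ was chosen as a complement to $W^H$, so $W_0^H=0$, and dually $(W_0^*)^H=0$), the $H$-fixed locus of $N_S$ is exactly $S^H\times\{0\}$. Hence the closed $H$-orbit through $(s_1,s_2)\in S^H\times N_0$ lies in $S^H\times\{0\}$ iff $s_2\in\NN(N_0)$, i.e.\ the preimage of $N^{(H)}$ in $G\times_H N_S$ equals $G\times_H(S^H\times\NN(N_0))$. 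A direct dimension count gives
\[
\codim_{G\times_H N_S}\bigl(G\times_H(S^H\times\NN(N_0))\bigr)=\dim N_0-\dim\NN(N_0)=\codim_{N_0}\NN(N_0),
\]
which proves (1).

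For (2), since $S^H$ is a smooth (symplectic) vector space, we have $(N_S)_\sing=S^H\times(N_0)_\sing$, and étale maps preserve smoothness, so the preimage of $N_\sing$ in $G\times_H N_S$ is $G\times_H(S^H\times(N_0)_\sing)$. Intersecting with the description from (1) yields $G\times_H\bigl(S^H\times((N_0)_\sing\cap\NN(N_0))\bigr)$, and the same dimension count gives codimension $\codim_{N_0}((N_0)_\sing\cap\NN(N_0))$, proving (2).

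The only genuine subtlety is verifying that the slice-theoretic étale neighborhood realizes the stratification and singular locus of $N$ faithfully, and that the $H$-fixed locus of $N_0$ is just the origin; once these are in hand the proof is two dimension counts. The computation also implicitly uses that codimensions of strata are constant along each $N^{(H)}$, so it suffices to read them off at one closed orbit per stratum.
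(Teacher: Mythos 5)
Your proof is correct and follows essentially the same route as the paper: both invoke the symplectic slice theorem to transfer the codimension computations to the local model $N_S=S^H\times N_0$, identify $N_S^{(H)}=S^H\times\NN(N_0)$ and $(N_S)_\sing=S^H\times (N_0)_\sing$, and conclude by a dimension count. You simply spell out the \'etale-neighborhood and $H$-fixed-locus details that the paper delegates to its citation of the slice theorem.
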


\begin{proof}
 It follows from the symplectic slice theorem \cite[Corollary 3.16]{HerbigSchwarzSeaton2} that
 $$
 \codim_NN^{(H)}=\codim_{N_S}(N_S^{(H)}) \text { and }\codim_N(N^{(H)})_\sing=\codim_{N_S}(N_S^{(H)})_\sing.
 $$
 Now $N_S=S^H\times N_0$ and $N_S^{(H)}=S^H\times\NN(N_0)$ giving (1). Similarly,
 $(N_S^{(H)})_\sing =S^H\times((N_0)_\sing\cap\NN(N_0))$  giving (2).
\end{proof}

Note that if $G$ is quasi-toral and $V$ is
 faithful and
 stable (equivalently,
  $1$-large \cite[Theorem 3.2]{HerbigSchwarz}),
then the principal isotropy group of the $G^0$-module $V$ is the kernel of the $G^0$-action.
In  \cite{HerbigSchwarzSeaton2}, Corollary
3.16, Proposition 5.4 and the discussion after  Lemma 5.6 show the following.

\begin{lemma}\label{lem:smooth=principal}
Suppose that $G$ is quasi-toral and $V$ is a faithful stable $G$-module with shell $N$.
\begin{enumerate}
\item The $G$-module $V$ is $1$-large.
If $G$ is a torus, then $V$ has TPIG.
\item The shell $N$ has rational singularities, in particular, it is normal.
\item Let $(S,H)$ be a symplectic slice representation of $N$ with Lagrangian $H$-submodule $W$. Then
$W$ is $1$-modular and $N_S$ has rational singularities.
\item $(N\git G)_\sm=(N\git G)_\pr$.
\end{enumerate}
\end{lemma}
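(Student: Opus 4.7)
The plan is to deduce (1)--(4) from three established ingredients: the characterization of stability for quasi-toral groups in \cite[Theorem 3.2]{HerbigSchwarz}, the symplectic slice theorem \cite[Corollary 3.16]{HerbigSchwarzSeaton2}, and the rational-singularity transfer results \cite[Proposition 5.4 and the discussion after Lemma 5.6]{HerbigSchwarzSeaton2}. For each part I plan to isolate the minimal input rather than redo the calculations.

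For (1), stability of a faithful quasi-toral $G$-module is equivalent to $1$-largeness by \cite[Theorem 3.2]{HerbigSchwarz}; in particular FPIG holds. When $G$ is a torus, a generic $v\in V_\pr$ has $G_v$ equal to the common kernel of the weights of $V$ that do not vanish on $v$, which by genericity is the common kernel of all nonzero weights of $V$. Faithfulness forces these weights to generate the character lattice, so the common kernel is trivial and $V$ has TPIG.

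For (2) and (3) I would proceed by induction on $\dim G$ using the symplectic slice theorem. At a closed orbit in $N$ with stabilizer $H$, a Luna slice neighborhood has the form $G\times^H N_S$, where $N_S$ is the shell of the slice $S\simeq W\oplus W^*$. The Lagrangian $H$-module $W$ inherits $1$-modularity from the ambient $V$, and $H$ remains quasi-toral, so the inductive hypothesis supplies rational singularities on $N_S$. These transfer to $N$ via \cite[Proposition 5.4]{HerbigSchwarzSeaton2}, and normality follows because $1$-modularity makes $N$ a complete intersection, hence Cohen--Macaulay. The main obstacle is verifying that the slice construction preserves $1$-modularity and handling the base case (trivial slice), both of which are treated in the cited sources.

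For (4), the inclusion $(N\git G)_\pr\subseteq(N\git G)_\sm$ is automatic since the principal stratum is smooth and open. The reverse inclusion is the content of the discussion after \cite[Lemma 5.6]{HerbigSchwarzSeaton2}: any point in a non-principal stratum has a closed orbit whose isotropy is strictly larger than the principal one, and the local model produced by its symplectic slice representation exhibits a genuine quotient singularity, so every non-principal point is singular in $N\git G$.
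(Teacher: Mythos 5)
Your proposal is essentially the paper's argument: the paper gives no independent proof of this lemma, attributing it verbatim to \cite[Theorem 3.2]{HerbigSchwarz} for the equivalence of stability and $1$-largeness and to \cite[Corollary 3.16, Proposition 5.4 and the discussion after Lemma 5.6]{HerbigSchwarzSeaton2} for parts (2)--(4), which are exactly the sources you invoke, and your TPIG argument for tori (the principal isotropy group of a faithful diagonal torus action is the joint kernel of the weights, hence trivial) matches the paper's remark preceding the lemma. One small imprecision: in (2) you derive normality from ``complete intersection, hence Cohen--Macaulay,'' but Cohen--Macaulay alone does not give normality (Serre's criterion also requires $R_1$); this is harmless here only because normality already follows from the rational singularities you establish, which is how the paper phrases it (``rational singularities, in particular normal'').
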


Assume $G$ is quasi-toral. Choose an isomorphism $G^0 \simeq (\C^\times)^\ell$ and a basis
$\{e_1,\dots,e_n\}$ of $V\simeq\C^n$ with respect to which $G^0$ acts diagonally.
The action of $G^0$ on $V$ is determined by a weight matrix $A\in\Z^{\ell\times n}$.
Specifically, the action of $(t_1,\ldots,t_\ell)\in G^0$ on
$e_j$ is given by the character
$\prod_{i=1}^\ell t_i^{a_{i,j}}$.
The dimension of the isotropy group of a point
$(v_1,\dots,v_n)$ is given by the corank of the submatrix of $A$ corresponding
to the columns $A_i$ such that
$v_i\neq 0$. Hence,
$(v_1,\dots,v_n)\in V_{(r)}$ if and only if the corresponding submatrix has rank $\ell - r$
and  $V$ is $k$-modular if and only if every $\ell\times(n-k-r)$ submatrix of $A$ has rank at least $\ell-r$. This is true if
and only if
every $\ell\times(n-k)$ submatrix of $A$ has rank $\ell$.
We therefore have the following.

\begin{lemma}\label{lem:k-modular-torus}
Let $G$ be quasi-toral and $V$ a faithful $G$-module. Then $V$ is $k$-modular if and only if every $\ell\times(n-k)$ submatrix of $A$ has rank $\ell$.
\end{lemma}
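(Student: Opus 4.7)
The plan is to unwind the definition of $k$-modularity directly in terms of the weight matrix $A$, using the discussion preceding the lemma. Write $v=(v_1,\dots,v_n)\in V$ and let $I(v)=\{i : v_i\neq 0\}\subseteq\{1,\dots,n\}$ be its support, with $A_{I(v)}$ the corresponding column-submatrix. Because $G^0$ acts diagonally via $A$ and $G/G^0$ is finite, $\dim G_v=\dim G^0_v=\ell-\rank(A_{I(v)})$. Hence $v\in V_{(r)}$ if and only if $\rank(A_{I(v)})=\ell-r$. Partitioning $V_{(r)}$ by support, the locally closed stratum with prescribed support $I$ has dimension $|I|$ in $V$, so
\[
\codim_V V_{(r)} \;=\; n-\max\bigl\{|I| : \rank(A_I)=\ell-r\bigr\}.
\]

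Thus $V$ is $k$-modular if and only if, for every $r$ with $1\leq r\leq\ell$, every $I$ satisfying $\rank(A_I)=\ell-r$ has $|I|\leq n-k-r$. I will now show that this is equivalent to the stated condition that every $\ell\times(n-k)$ submatrix of $A$ has rank $\ell$.

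For the forward direction, suppose that some column subset $J$ with $|J|=n-k$ satisfies $\rank(A_J)=\ell-r$ for some $r\geq 1$. Then $J$ itself witnesses $\codim_V V_{(r)}\leq n-|J|=k<k+r$, violating $k$-modularity. Conversely, assume every $\ell\times(n-k)$ submatrix has rank $\ell$, and suppose for contradiction that some $I$ has $\rank(A_I)=\ell-r$ with $r\geq 1$ and $|I|>n-k-r$. Enlarge $I$ to a set $J$ of size exactly $n-k$ by adjoining at most $(n-k)-(n-k-r+1)=r-1$ columns. Since adding one column increases the rank by at most one,
\[
\rank(A_J) \;\leq\; \rank(A_I)+(|J|-|I|) \;\leq\; (\ell-r)+(r-1) \;=\; \ell-1,
\]
contradicting the assumption that $A_J$ has rank $\ell$.

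There is no serious obstacle here; the argument is a short bookkeeping exercise once the dictionary between supports of vectors and column-subsets of $A$ is in place. The only points requiring care are (i) making sure the quantifier runs over $r\geq 1$ rather than $r\geq 0$ (which is why the extension step only needs to add at most $r-1$ columns) and (ii) noting that one may always extend $I$ to a set of size $n-k$ since the hypothesis $|I|>n-k-r$ combined with $r\leq\ell\leq n$ forces $|I|\leq n-k$ to be achievable (otherwise $|I|>n-k$ and we could shrink $I$ to the required size, again using that rank drops by at most one per column removed, to reach the same contradiction).
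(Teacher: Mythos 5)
Your proof is correct and follows essentially the same route as the paper, which derives the lemma from the immediately preceding discussion: the dimension of $G_v$ is the corank of the column-submatrix supported on the nonzero coordinates of $v$, so $k$-modularity becomes a rank condition on submatrices, which is then seen to be equivalent to every $\ell\times(n-k)$ submatrix having full rank. You merely spell out the column-adding/removing bookkeeping that the paper leaves implicit (your remark that ``rank drops by at most one per column removed'' in the shrinking case is unnecessary --- removing columns can only decrease the rank, which already gives the contradiction --- but this is harmless).
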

\begin{remark}\label{rem:general}
If  $G$ is a torus, then $A$ can be arbitrary. Hence the  ``general''  $G$-module of dimension $n$ is $k$-modular if $n-k\geq\ell$. For semisimple $G$, the ``general'' $G$-module
is $k$-modular   \cite[Corollary 11.6]{GWSlifting},
\cite[Theorem 3.6]{HerbigSchwarzSeaton2}.
\end{remark}

\begin{remark}\label{rem:1-modular}
In the case that $k=1$, the criterion of Lemma \ref{lem:k-modular-torus} is equivalent to the existence of a relation $\sum_i a_iA_i=0$ where the $a_i\in\Z\setminus\{0\}$ and $A_i$ is the $i$th column
of the matrix $A$.
To see this, note that $k$-modularity depends only on the action of $G^0$ so that we may assume $G$ is a torus.
UTCLS, which also does not change $k$-modularity, we may assume that the $a_i$ are positive, and then there is a $G$-invariant monomial
$\prod_{i=1}^n x_i^{a_i}$.
By \cite[Lemma 2]{WehlauPopov}
and Lemma \ref{lem:smooth=principal}(1),
 $V$ is stable and  $1$-large.
\end{remark}

\section{Stable actions and symplectic slices}
Let $V$ be a $G$-module where $G$ is quasi-toral.
It follows from
\cite[Lemma 2]{WehlauPopov} that
$V$ is
stable if and only if there is
no proper $G$-submodule $V'\subset V$ such that restriction to $V'$ gives an isomorphism
$\C[V]^G\tosim\C[V']^G$.
Let $n=\dim V$ and $\ell = \dim G$.
For the rest of this section  we assume that $G$ is commutative.

\begin{lemma}\label{lem:closed-orbit-V}
Let  $V$ be a $G$-module.  If the shell $N$ of $V$ contains a nonzero closed $G$-orbit, then UTCLS, $V$ contains a nonzero stable $G$-submodule.
\end{lemma}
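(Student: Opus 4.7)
The plan is as follows. Since $G$ is commutative, decompose $V = \bigoplus_{i=1}^n L_i$ into $1$-dimensional $G$-submodules with $G^0$-characters $\chi_i \in \Z^\ell$; then every Lagrangian $G$-submodule of $U := V\oplus V^*$ has the form $V_\epsilon := \bigoplus_i L_i^{\epsilon_i}$ for some $\epsilon \in \{+,-\}^n$, where $L_i^+ := L_i$ and $L_i^- := L_i^*$, so UTCLS allows us to replace $V$ by $V_\epsilon$ for any $\epsilon$. Let $x = (v,\phi) \in N\setminus\{0\}$ have closed $G$-orbit, and write $I = \operatorname{supp}(v)$, $J = \operatorname{supp}(\phi)$, $S = \{(i,+) : i\in I\}\sqcup\{(j,-) : j\in J\}$; the $G^0$-weight at $s\in S$ is $w_{(i,+)} = \chi_i$, $w_{(j,-)} = -\chi_j$.

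The goal is to produce a positive integer relation $\sum_{s\in T^*}a_s w_s = 0$ on a \emph{consistent} subset $T^*\subset S$, that is, a subset containing no pair $\{(i,+),(i,-)\}$. Given such a $T^*$, set $\epsilon_i = +$ if $(i,+)\in T^*$, $\epsilon_i = -$ if $(i,-)\in T^*$ (unambiguous by consistency), and $\epsilon_i$ arbitrary otherwise. Then $W\subset V_\epsilon$, defined as the span of the $L_i^{\epsilon_i}$ for $i$ appearing in $T^*$, is a nonzero $G$-submodule whose $G^0$-weights carry the given positive integer relation, so by Remark \ref{rem:1-modular}, $W$ is a stable $G$-module.

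To construct $T^*$, I consider two cases. If $I\cap J = \emptyset$ then $S$ is already consistent, and the Hilbert--Mumford convex hull criterion applied to $G^0\cdot x$ (which is closed because $Gx$ is a finite union of equidimensional $G^0$-orbits) yields $\lambda_s\in\Q_{\ge 0}$, not all zero, with $\sum_s \lambda_s w_s = 0$; after clearing denominators I take $T^* = \operatorname{supp}(\lambda)$. If instead $I\cap J\ne\emptyset$, the moment map identity $\sum_{i\in I\cap J}v_i\phi_i\chi_i = 0$ with every $v_i\phi_i\ne 0$ is a $\C$-linear relation on $\{\chi_i : i\in I\cap J\}$ with all coefficients nonzero; a generic real linear combination of its real and imaginary parts produces a real relation $\sum_{i\in I\cap J}\alpha_i\chi_i = 0$ with all $\alpha_i\ne 0$, and density of rational points within the rational solution space then yields $\alpha_i\in\Z\setminus\{0\}$. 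Splitting $I\cap J = I^+\sqcup I^-$ according to the sign of $\alpha_i$ and setting $T^* = \{(i,+):i\in I^+\}\sqcup\{(j,-):j\in I^-\}$ gives the required consistent support.

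The main obstacle is consistency of $T^*$: a raw Hilbert--Mumford relation may involve both $(i,+)$ and $(i,-)$ for the same $i\in I\cap J$, and such a relation cannot fit inside any single Lagrangian $V_\epsilon$. The moment map hypothesis $\mu(x)=0$ is precisely the extra input that, in the overlap case $I\cap J\ne\emptyset$, supplies an independent real relation on the overlap indices whose positive and negative parts can be cleanly distributed between the two Lagrangian slots.
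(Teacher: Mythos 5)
Your proof is correct, but it follows a genuinely different route from the paper's. The paper uses the nonzero closed orbit only once, and negatively: after reducing to a faithful action with $V^G=(0)$, if $n=\dim V$ equalled $\ell=\dim G$ the weights would be linearly independent, forcing $N$ into the null cone and leaving no nonzero closed orbits; hence $n>\ell$, so any $\ell+1$ of the characters already satisfy a nontrivial integer relation found by pure linear algebra, whose negative coefficients are then removed UTCLS to produce an invariant monomial whose support spans a stable submodule. You instead extract the relation from the data of the closed orbit itself: the Hilbert--Mumford convex-hull criterion when the supports of $v$ and $\phi$ are disjoint, and the moment-map identity $\sum_{i\in I\cap J}v_i\phi_i\chi_i=0$ when they overlap, with the sign pattern of the resulting integer relation dictating the choice of Lagrangian. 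Both arguments are sound. The paper's is shorter precisely because its relation is found abstractly rather than on the support of $x$, so the consistency issue you carefully isolate never arises; yours is more constructive and ties the stable submodule to the support of the given closed orbit, which is closer in spirit to the refinement actually needed later in Lemma \ref{lem:i.groups}. One harmless overstatement: your claim that \emph{every} Lagrangian $G$-submodule of $U$ is some $V_\epsilon$ fails when characters repeat, but you only use the (true) direction that each $V_\epsilon$ is a Lagrangian $G$-submodule, so nothing in the argument is affected.
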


\begin{proof}
We may assume that $G$ acts faithfully on $V$ and that $V^G=(0)$.
Choose a basis $\{v_1,\dots,v_n\}$ for $V$ and corresponding coordinate functions $\{x_1,\dots,x_n\}$ such that $G$ acts on $x_j$ by the character $\chi_j$.
Clearly, $n\geq \ell$, and if there is equality, then
there are no nonzero closed orbits in $N$. Thus $\ell<n$ and we may assume that there is a nontrivial relation $\sum_{i=1}^{\ell+1} a_i\chi_i=0$ where the $a_i\in\Z$. By replacing some of the $\chi_i$ by their inverses,
corresponding to changing the Lagrangian section,
we may assume that $a_i\geq 0$ for all $i$.
By renumbering we may assume that $a_i>0$ for $i=1,\dots,p$ and that $a_i=0$ for $i>p$. Then $\prod_{i=1}^p x_i^{a_i}$ is $G$-invariant and the span of $v_1,\dots,v_p$    is  stable.
\end{proof}

For the rest of this section we assume that  $V$ is faithful and $1$-modular.

\begin{proposition}\label{prop:make.stable}
Let $G$ and $V$ be as above.
\begin{enumerate}
\item  UTCLS, $V$ is stable.
\item Suppose that $V'$ is a  stable $G$-submodule of $V$. Then, UTCLS, $V$ is stable and $V'$ remains a submodule of $V$.
\end{enumerate}
\end{proposition}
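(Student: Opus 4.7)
The plan is to translate $1$-modularity of $V$ and stability of $V'$ into the existence of integer linear relations among the $G$-characters, and to exploit that a change of Lagrangian of $V \oplus V^*$ amounts to flipping signs of individual characters. Since $G$ is commutative, we diagonalize its action and write $V = \bigoplus_{i=1}^n \C v_i$ with $G$ acting by a character $\chi_i$ on $v_i$, choosing the basis so that $V' = \bigoplus_{i \in I} \C v_i$ for some index set $I \subseteq \{1, \ldots, n\}$. A Lagrangian change effected by a subset $J \subseteq \{1, \ldots, n\}$---replacing each $v_j$ with $j \in J$ by its dual in $V^*$---negates the character $\chi_j$, and it preserves $V'$ as a submodule exactly when $J \cap I = \emptyset$.

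For part (1), Remark~\ref{rem:1-modular} provides a relation $\sum_i a_i \chi_i = 0$ with every $a_i \in \Z \setminus \{0\}$. Taking $J = \{i : a_i < 0\}$ converts this into a relation with all positive coefficients, so $\prod_i x_i^{a_i}$ becomes a $G$-invariant monomial involving every coordinate of the new Lagrangian. By \cite[Lemma 2]{WehlauPopov} this implies stability.

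For part (2), $1$-modularity of $V$ again gives $\sum_i c_i \chi_i = 0$ with every $c_i \in \Z \setminus \{0\}$, and stability of $V'$ combined with \cite[Lemma 2]{WehlauPopov} gives $\sum_{i \in I} d_i \chi_i = 0$ with every $d_i$ a positive integer. Adding $N$ times the latter to the former for $N$ sufficiently large that $c_i + N d_i > 0$ for every $i \in I$ yields a relation $\sum_i e_i \chi_i = 0$ with $e_i > 0$ for $i \in I$ and $e_i \neq 0$ for $i \notin I$. Setting $J = \{i \notin I : e_i < 0\}$ satisfies $J \cap I = \emptyset$, and the corresponding change of Lagrangian simultaneously preserves $V'$ and renders every coefficient in the relation positive. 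Popov--Wehlau once more gives stability.

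The main technical point to verify is that single-coordinate flips constitute an honest $G$-equivariant change of Lagrangian when $G/G^0$ is nontrivial, but commutativity of $G$ ensures that $G/G^0$ can be simultaneously diagonalized with $G^0$, so each $\chi_i$ is a bona fide character of the full group $G$ and the flips respect the full $G$-action.
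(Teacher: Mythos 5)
Your argument is correct, but it follows a genuinely different route from the paper's, particularly for part (2). For part (1) you are essentially reproducing Remark \ref{rem:1-modular}: $1$-modularity gives a relation $\sum_i a_i\chi_i=0$ with all $a_i\neq 0$, and flipping the Lagrangian coordinates where $a_i<0$ yields an invariant monomial in all variables, hence stability. The paper instead deduces (1) from (2) via Lemma \ref{lem:closed-orbit-V}. For part (2) the paper argues structurally: it takes the principal isotropy group $H$ of $V'$, a $G$-complement $V''$, notes that $(V'',H)$ is $1$-modular because it is (up to trivial factors) a slice representation of $(V,G)$ \cite[Remarks 9.6]{GWSlifting}, and inducts on dimension, changing the Lagrangian only inside $V''$. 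You instead splice the two weight relations --- one with all coefficients nonzero coming from $1$-modularity, one supported on $I$ with strictly positive coefficients coming from stability of $V'$ --- into a single relation that is positive on $I$ and nonzero off $I$, and then flip only coordinates outside $I$. This is more elementary and self-contained, avoiding both the induction and the slice-representation input. Two small points deserve a word in a final write-up. First, the extraction of $\sum_{i\in I}d_i\chi_i=0$ with all $d_i>0$ from stability of $V'$ is the converse of the implication you use in part (1); it is standard (the generic orbit in $V'$ is closed, so by the Hilbert--Mumford description of closed torus orbits the cone generated by the weights of $V'$ is a linear subspace, and summing expressions of each $-\chi_i$ as a nonnegative combination of the $\chi_j$ produces the strictly positive relation), but the statement of \cite[Lemma 2]{WehlauPopov} as quoted in the paper does not hand it to you directly. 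Second, the relations supplied by the weight matrix are a priori relations among characters of $G^0$ only; since stability and $1$-modularity depend only on $G^0$, this is harmless, as your closing paragraph correctly indicates.
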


\begin{proof}
We may assume that $V^G=(0)$.
Since $V$ is $1$-modular, $n>\ell$, and $N$ contains nonzero closed orbits. By Lemma \ref{lem:closed-orbit-V}, UTCLS, there is a nonzero stable $G$-submodule $V'\subset V$. Now (1) follows if we can show (2).

Let $H$ denote the principal isotropy group of $V'$. Since $G$ is commutative, $H$ acts trivially on $V'$. Let $V''$ be a $G$-complementary subspace to $V'$.   Then $(V'',H)$ is $1$-modular since it is a slice representation  (up to trivial factors) of $(V,G)$ \cite[Remarks 9.6]{GWSlifting}. By induction on dimension we may assume that, UTCLS, $(V'',H)$ is stable, hence $V=V'\oplus V''$ is stable.
\end{proof}

\begin{remark}\label{rem:H-fixed-points}
Suppose that $V'\subset V$ are stable $G$-modules where $H$ is the principal isotropy group of $V'$. Then $V^H\supset V'$ is  stable with principal isotropy group $H$.
\end{remark}

\begin{remarks}
\begin{enumerate}
\item Let $V$ be $0$-modular and $G$ any reductive group.
Then
$N=N_V$ is a complete intersection \cite[Proposition 3.2]{HerbigSchwarzSeaton2} and
$V_{(0)}\subset V$ is the set of points with finite isotropy group.
If
$N$
is irreducible, then \cite[Theorem 2.2]{HerbigSchwarz} shows that
$N$
is the closure of $(V_{(0)}\times V^*)\cap
N$,
and  the latter set consists of points with finite isotropy.
Such points are smooth points of $N$ \cite[Proposition 3.2]{HerbigSchwarzSeaton2}. Thus the smooth points of $N$ are dense, so it is reduced.
\item
Assume that  $G\to\GL(V)$ is a faithful torus representation. Then $V$ is $0$-modular.
Bulois \cite[Theorem 2.2]{Bulois} shows that $N$ is normal iff it is irreducible.     If $N$ is irreducible, then it is reduced by   (1)  so that $V$ is $1$-modular  and has rational singularities by Remark \ref{rem:rational-sings}. Hence $N$ is normal. The converse is, of course, trivial.
\end{enumerate}
\end{remarks}

\begin{lemma}\label{lem:i.groups}
Let $(S,H)$ be a maximal proper symplectic slice representation of $N$. Then, UTCLS, $V$ is stable and $H$ is the principal isotropy group of a stable $G$-submodule of $V$.
\end{lemma}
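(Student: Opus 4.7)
The strategy is to build a stable $G$-submodule of $V$ inside $V^H$ and then force its principal isotropy to equal $H$ by invoking the maximality of $(S,H)$. We may assume $V^G=0$ by splitting off trivial summands. The key geometric input is that, since $G$ is commutative, the closed $G$-orbit $Gx\subset N$ that produces the slice representation $(S,H)$ is $H$-fixed and therefore satisfies $x\in V^H\oplus(V^H)^*$; moreover $x\neq 0$ because $(S,H)$ is proper, so $H\neq G$. Thus the shell of the $G$-submodule $V^H$, namely $N_{V^H}=N\cap\bigl(V^H\oplus(V^H)^*\bigr)$, contains the nonzero closed orbit $Gx$.

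I would then carry out two Lagrangian reductions. First, apply Lemma~\ref{lem:closed-orbit-V} to the $G$-module $V^H$: UTCLS (by a swap supported on the coordinates where $\chi_i|_H=1$), there is a nonzero stable $G$-submodule $V'\subset V^H$. Second, apply Proposition~\ref{prop:make.stable}(2) to the inclusion $V'\subset V$: UTCLS (by a further swap supported in a $G$-complement of $V'$, which leaves each coordinate of $V'$ pointwise fixed), $V$ becomes stable while $V'$ remains a submodule. Since the second swap does not alter any character on the coordinates of $V'$, the inclusion $V'\subset V^H$ persists in the final Lagrangian; in that Lagrangian, $V$ is stable and $V'\subset V^H$ is a nonzero stable $G$-submodule.

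Let $H'\supset H$ denote the principal isotropy of $V'$. By Remark~\ref{rem:H-fixed-points}, $V^{H'}\supset V'$ is a stable $G$-submodule of $V$ with principal isotropy exactly $H'$. To invoke maximality I need a symplectic slice representation at $H'$, which I manufacture as follows: pick a generic $v\in V^{H'}$ so that $Gv$ is closed in $V^{H'}$ (hence in $V$) and $G_v=H'$, which is possible because both conditions cut out open dense loci of $V^{H'}$. Then $(v,0)\in V^{H'}\oplus(V^{H'})^*\subset N$ lies in the $G$-orbit $Gv\times\{0\}$, which is closed already in $V\oplus V^*$ and has isotropy exactly $H'$; the symplectic slice theorem thus furnishes a symplectic slice representation at $H'$. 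Maximality of $(S,H)$ together with $H\subset H'$ forces $H'=H$ or $H'=G$, and the latter is impossible because $V'\subset V^{H'}$ is nonzero while $V^G=0$. Hence $H'=H$, and $V^H=V^{H'}$ is the desired stable $G$-submodule of $V$ whose principal isotropy is exactly $H$.

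The main conceptual step is the production of a slice representation at $H'$ via the point $(v,0)$, whose orbit is closed in $N$ purely because the $V^*$-component vanishes. I expect the only genuine technical nuisance to be verifying the compatibility of the two successive UTCLS modifications, but this reduces to the observation that their supports are disjoint coordinate sets.
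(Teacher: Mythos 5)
Your proof is correct and follows essentially the same route as the paper's: restrict attention to the $H$-fixed part of $U=V\oplus V^*$ (which contains the nonzero closed orbit), apply Lemma \ref{lem:closed-orbit-V} to extract a nonzero stable $G$-submodule, let $H'$ be its principal isotropy group, and use maximality together with Proposition \ref{prop:make.stable}(2) to conclude $H'=H$ with $V$ stable. The only difference is that you supply an explicit justification (via the generic point $(v,0)$ with closed orbit in $N$) for the step the paper asserts without comment, namely that $H'$ is indeed the isotropy group of a symplectic slice representation of $N$ so that maximality applies.
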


\begin{proof}
As an $H$-module,
$$
U=V\oplus V^*\simeq S_0\oplus U^H
$$
where  $G$ preserves $S_0$ and $U^H$.  We can decompose $S_0$ as $W_0\oplus W_0^*$ and $U^H$ as $U_0\oplus U_0^*$ where each factor is $G$-stable, hence a direct sum of weight spaces.
Since $H$ is an isotropy group of a nonzero closed orbit in $N$, the orbit lies in  $N_{U_0}\times\{0\}$ where $\{0\}$ is the origin of $S_0$. By Lemma  \ref{lem:closed-orbit-V},  UTCLS,   $U_0$ contains a nonzero stable $G$-submodule $U_0'$.    Let $H'$ be the principal  isotropy group of   $U_0'$. Then $H'\supset H$ is the isotropy group of a closed orbit in $N$ and by maximality, $H'=H$.
By Proposition \ref{prop:make.stable}(2) we may assume that $U_0'\subset V$ where $V$ is stable.
\end{proof}

 The following
implies
 Theorem \ref{mainthm:1}.

\begin{theorem}\label{thm:i.groups}
Let $G$ be commutative   reductive and $V$ a $1$-modular faithful  $G$-module with shell $N$. Let $H$ be an isotropy group of  a nonzero closed orbit in $N$. Then, UTCLS, $V$ is stable, $V^H$ is stable (as a $G$-module) and $H$ is the principal isotropy group of $V^H$.
\end{theorem}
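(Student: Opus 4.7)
The plan is to induct on $\dim V$, bootstrapping Lemma \ref{lem:i.groups} (which handles the case where the slice representation $(S_H,H)$ is already maximal) together with Remark \ref{rem:H-fixed-points} (which upgrades a stable submodule with principal isotropy $H$ to the analogous statement about $V^H$). As usual I would first reduce to $V^G=0$; since $U^G=V^G\oplus (V^*)^G=0$, this rules out nonzero orbits with full isotropy $G$, so $H\subsetneq G$. The base case $\dim V=0$ is vacuous.

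For the inductive step I split into two cases. If the symplectic slice representation at a point with isotropy $H$ is itself a maximal proper slice representation, then Lemma \ref{lem:i.groups} directly gives, UTCLS, that $V$ is stable and that $H$ is the principal isotropy group of some stable $G$-submodule $V_1\subseteq V$; Remark \ref{rem:H-fixed-points} then upgrades this to $V^H\supseteq V_1$ being stable with principal isotropy $H$, closing this case.

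Otherwise, I enlarge $H$ to the isotropy group $H'$ of a maximal proper symplectic slice representation $(S',H')$, which exists by the chain condition on reductive subgroups (commutativity of $G$ trivializing conjugacy). Applying Lemma \ref{lem:i.groups} to $H'$ yields, UTCLS, that $V$ is stable and that $V^{H'}$ is a stable $G$-module with principal isotropy $H'$. Decompose the slice as $S'=(S')^{H'}\oplus\bigl(W'_0\oplus (W'_0)^*\bigr)$ with $W'_0$ a faithful $1$-modular $H'$-module of dimension less than $\dim V$ (Lemma \ref{lem:smooth=principal}(3)). By the symplectic slice theorem, closed orbits in $N$ whose isotropy lies in $H'$ correspond to closed $H'$-orbits in $N_{S'}$, and since $(S')^{H'}$ contributes no nontrivial isotropy, $H$ appears as the isotropy of a nonzero closed $H'$-orbit in $N_{W'_0}$. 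The inductive hypothesis applied to $(W'_0,H')$ with isotropy $H$ then gives, UTCLS, that $(W'_0)^H$ is a stable $H'$-module with principal isotropy $H$.

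The step I expect to be the main obstacle is the final transfer of this slice-level result back to $V$. Since $G$ is commutative, $V$ and $W'_0$ are sums of weight spaces for $G$ and $H'$ respectively, and the slice construction identifies the $H'$-weights of $W'_0$ with certain $G$-weights of $V$ that are nontrivial on $H'$. Writing $V^H=V^{H'}\oplus V_0^H$, where $V_0^H$ carries the weights matching those of $(W'_0)^H$, I would combine the positive-integer weight relation certifying stability of $V^{H'}$ with the one certifying stability of $(W'_0)^H$ (both supplied by Remark \ref{rem:1-modular}) to obtain a positive-integer relation on the weights of $V^H$, yielding stability UTCLS. For the principal isotropy, the weights of $V^{H'}$ cut out $H'\subseteq G$ and the additional characters of $V_0^H$ lifted from $H'$-characters of $(W'_0)^H$ cut $H'$ further down to $H$. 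The delicate point is compatibilizing the UTCLS sign choices at the slice and ambient levels so that the inductive positive-integer relation on $(W'_0)^H$ lifts to a positive-integer relation on $V_0^H$ consistent with the preexisting positive-integer relation on $V^{H'}$; the commutativity of $G$ (so that the slice decomposition refines the ambient weight-space decomposition) is what makes this compatibility possible.
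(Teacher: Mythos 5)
Your skeleton matches the paper's: handle the top of a maximal chain of isotropy groups with Lemma \ref{lem:i.groups}, pass to the slice representation, induct, and finish with Remark \ref{rem:H-fixed-points} and Proposition \ref{prop:make.stable}. The one substantive difference is how the induction is organized, and it is exactly what creates the "main obstacle" you flag. The paper inducts on the length of the chain $H=H_r\subsetneq\cdots\subsetneq H_0\subsetneq G$ and observes that, once $H_0$ is realized as the principal isotropy group of a stable $G$-submodule $V_0\subset V$, the slice representation of $H_0$ is, up to trivial factors, its action on a $G$-complement $V'$ of $V_0$ \emph{inside} $V$. The inductive hypothesis then produces a stable $H_0$-submodule $V''\subset V'$ with principal isotropy $H$, and $V_0\oplus V''$ is already an honest stable $G$-submodule of $V$, so Remark \ref{rem:H-fixed-points} applies with no translation needed. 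By contrast, you work with the abstract slice module $W_0'$ and must then lift the UTCLS sign choices and the positive weight relations from the $H'$-module $(W_0')^H$ back to $G$-weights of $V$. That step can be completed (the $H'$-relation on the weights of $W^H$ gives a character of $G/H'$, which can be cancelled against a positive combination of the weights of $V^{H'}$ because $V^{H'}$ is a stable $G/H'$-module with finite kernel), but as written it is only a sketch and is the least solid part of your argument; identifying the slice with the action on a $G$-complement makes it unnecessary. One small additional point: in your Case 2 you assert that Lemma \ref{lem:i.groups} gives that $V^{H'}$ itself is stable with principal isotropy $H'$, whereas the lemma only produces \emph{some} stable submodule with that principal isotropy; you need Remark \ref{rem:H-fixed-points} there as well, as you do use it in Case 1.
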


\begin{proof}
Recall that any isotropy group of a closed orbit in  $N$ determines the corresponding symplectic slice representation. There is a maximal sequence of isotropy groups  of closed orbits in $N$:
$$
H=H_r\subsetneq H_{r-1}\subsetneq \cdots H_0\subsetneq G.
$$
By Lemma \ref{lem:i.groups}, we may assume that $H_0$ is the principal isotropy group of a stable $G$-submodule $V_0\subset V$. Let $V'$ be a $G$-complement to $V_0$ in $V$. Then the slice representation of $H_0$, up to trivial factors, is its action on $V'$.   The $H_i$ for $i>0$ are isotropy groups of closed orbits in $N_{V'}$ for the action of $H_0$. By induction on $r$, UTCLS,  $H_r=H$ is the principal isotropy group of a stable $H_0$-submodule $V''$ of $V'$, hence the principal isotropy group of the stable $G$-submodule $V_0\oplus V''\subset V$. By Remark \ref{rem:H-fixed-points}, $V^H$ is stable with principal isotropy group $H$.  By Proposition \ref{prop:make.stable} we can arrange that $V$ is stable.
\end{proof}

\section{Codimension of singularities and slice representations of type O}
 Let $G$ be a reductive complex group and $V$ a $1$-modular $G$-module with shell $N$
 and symplectic quotient $M=N\git G$.
 We give some results about this general situation before returning to the case that $G$ is quasi-toral.  Let $(S=W\oplus W^*,H)$ be a symplectic slice representation of $N$.
As in \S \ref{sec:background}, we have $W=W^H\oplus W_0$ as $H$-module and $N_S\simeq S^H\times N_0$ where $N_0$ is the shell of $W_0$.
Let $H_1$, $H_2$ be two subgroups of $G$.  We write $(H_1)=(H_2)$ (resp.\ $(H_1)\leq (H_2)$) if $H_1$ is conjugate to  (resp.\   a subgroup of) $H_2$.

 From the symplectic slice theorem one gets the following.

\begin{lemma}\label{lem:codim}
Let $G$, etc.\ be as above.
\begin{enumerate}
\item $(N\git G)_{(H)}$ has codimension $\dim N_0\git H$ in $N\git G$.
\item Let $x\in N$ such that $Gx$ is closed. Then there is an affine $G$-saturated neighborhood $U$ of $x$ such that $(G_y)\leq (G_x)$ for all $y\in U$.
\end{enumerate}
\end{lemma}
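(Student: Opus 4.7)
My plan is to deduce both parts directly from the symplectic slice theorem cited in \cite[Theorem 3.15]{HerbigSchwarzSeaton2}, which produces a $G$-equivariant étale morphism $\Phi\colon G\times_H N_S''\to U\subset N$ onto a $G$-saturated affine open neighborhood $U$ of $x$, where $N_S''$ is an $H$-saturated affine open neighborhood of the origin in $N_S$, and $\Phi$ induces an isomorphism $N_S''\git H\tosim U\git G$ on quotients. The two parts become purely local assertions on the slice model.

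For part (2), every $y\in U$ lifts to some $[g,s]\in G\times_H N_S''$. Étale-ness and $G$-equivariance of $\Phi$ force the isotropy $G_y$ to be conjugate to the $H$-stabilizer $H_s$ of $s$; since $H_s\subseteq H$, we conclude $(G_y)\leq (H)$. Shrinking $U$ if necessary keeps it affine and $G$-saturated.

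For part (1), the isomorphism $N_S''\git H\tosim U\git G$ reduces the computation to the codimension of $(N_S\git H)_{(H)}$ in $N_S\git H$ near $\pi_{N_S}(0)$. Since $H$ acts trivially on $S^H$ and $N_S=S^H\times N_0$, the quotient decomposes as $N_S\git H=S^H\times (N_0\git H)$. A closed $H$-orbit in $N_S$ has isotropy equal to $H$ exactly when the orbit is a single $H$-fixed point, and because $W_0$ is the $H$-complement to $W^H$ in $W$ we have $N_0^H=\{0\}$. Hence $N_S^H=S^H\times\{0\}$, so $(N_S\git H)_{(H)}=S^H\times\{\pi_{N_0}(0)\}$, which has codimension $\dim N_0\git H$ in $S^H\times (N_0\git H)$.

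No significant obstacle appears: the argument is a direct unpacking of the symplectic slice theorem. The only point requiring attention is the identification of the $(H)$-stratum inside $N_S$, which rests on the defining property that $W_0$ has no nonzero $H$-fixed vectors, ensuring the stratum is $S^H\times\{0\}$ rather than a larger subvariety.
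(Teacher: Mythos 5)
Your argument is correct and is exactly the intended unpacking of the symplectic slice theorem; the paper offers no proof beyond the citation, so your route coincides with the paper's. One small precision for part (2): \'etale-ness and $G$-equivariance alone only give $G_{[g,s]}\subseteq G_y$ with finite index, and what you actually need is that $\Phi$ is \emph{excellent} (strongly \'etale), i.e., that $G\times^H N_S''$ is the fiber product of $U$ and $N_S''\git H$ over $U\git G$, which forces $G_y$ to fix each point of the finite fiber $\Phi\inv(y)$ and hence gives $G_y=gH_sg\inv$ exactly --- this property is part of the cited theorem, so no gap results.
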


Let $N^{<H>}$ denote $N^{(H)}\cap N^H$. The proof of \cite[Lemma 5.5]{GWSliftingHomotopies} gives the following.

\begin{lemma}\label{lem:general.stuff}
Let $(S,H)$, etc.\ be as above.
\begin{enumerate}
\item $N^{<H>}$ is Zariski open in $N^H$ and all $G$-orbits intersecting $N^{<H>}$ are closed.
\item  $\overline{M_{(H)}}=\bigcup_{(H')\geq(H)} M_{(H')}=\pi(N^H)$.
\item If $g\in G$ and $gN^{<H>}\cap N^H\neq\emptyset$, then $g\in N_G(H)$.
\item $M_{(H)}$ is the quotient of $N^{<H>}$ by the free action of $N_G(H)/H$. Hence $N^{<H>}\to M_{(H)}$ is a principal $N_G(H)/H$-bundle \cite[Corollaire 1]{LunaSlice}.
\end{enumerate}
\end{lemma}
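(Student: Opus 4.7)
The plan is to follow the argument of \cite[Lemma 5.5]{GWSliftingHomotopies}, with Luna's slice theorem replaced by its symplectic analogue \cite[Theorem 3.15]{HerbigSchwarzSeaton2} so that everything takes place inside $N$. The single geometric input driving all four parts is that near a closed orbit $Gx\subset N$ with $G_x=H$ there is a $G$-saturated affine neighborhood $U$ of $Gx$ together with a $G$-equivariant isomorphism $U\cong G\times_H N_S$, where $N_S$ is the shell of the symplectic slice at $x$.

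For (1), I first observe that $x\in N^{<H>}$ is equivalent to $G_x=H$ (exactly) together with $Gx$ closed: the containment $H\subset G_x$ from $x\in N^H$, together with $(G_x)=(H)$ from $x\in N^{(H)}$, forces equality. Inside the local model $U\cong G\times_H N_S$, the subvariety $U\cap N^H$ corresponds to $(G\times_H N_S)^H$, and the locus where the stabilizer does not jump, i.e., where the slice coordinate lies in $N_S^H$, is open in this, $G$-saturated, and consists of closed orbits, since these orbits are exactly the ones meeting the $N_S^H$-factor of the zero section. This gives both the openness of $N^{<H>}$ in $N^H$ and the closedness of all $G$-orbits through $N^{<H>}$.

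For (2), I would prove the chain of equalities in two steps. First, $\pi(N^H)=\bigcup_{(H')\geq(H)}M_{(H')}$: for $y\in N^H$, the unique closed orbit $Gz\subset\overline{Gy}$ has $(G_z)\geq(G_y)\geq(H)$ by upper semicontinuity of stabilizers under specialization; conversely, any closed orbit with isotropy containing a conjugate of $H$ has a $G$-translate meeting $N^H$. Second, $\pi(N^H)=\overline{M_{(H)}}$: the saturation $GN^H=\{y\in N:(G_y)\geq(H)\}$ is closed in $N$, so $\pi(N^H)$ is closed in $M$; it contains $M_{(H)}$; and each stratum $M_{(H')}$ with $(H')>(H)$ lies in $\overline{M_{(H)}}$ because the symplectic slice model at a point of $N^{<H'>}$ exhibits nearby points of $M_{(H)}$, since a generic deformation in the slice shell has isotropy exactly $H$ (by the definition of the symplectic slice at the smaller closed orbit).

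Part (3) is then group-theoretic: if $y\in N^{<H>}$ and $gy\in N^H$, then $G_y=H$ by (1), so $H\subset G_{gy}=gHg^{-1}$; since $\dim H=\dim gHg^{-1}$ we get $H^0=gH^0g^{-1}$, and comparing finite component groups of equal cardinality inside $N_G(H^0)$ forces $gHg^{-1}=H$. Part (4) is then formal: by (3), $N_G(H)$ preserves $N^{<H>}$, the induced action of $N_G(H)/H$ is free (if $[g]$ fixes $y$ then $g\in G_y=H$), and its orbits are exactly the fibers of $N^{<H>}\to M_{(H)}$, since two points of $N^{<H>}$ lying in the same closed $G$-orbit differ by an element of $G$ which by (3) lies in $N_G(H)$. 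The principal-bundle structure is then supplied by \cite[Corollaire 1]{LunaSlice}. The main obstacle is (2), specifically producing enough deformations in the symplectic slice to show $\overline{M_{(H)}}$ is all of $\pi(N^H)$ and not a proper closed subset; once this is done, the remaining parts are bookkeeping in the local slice model.
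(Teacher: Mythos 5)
Your overall strategy is the right one: the paper offers no written proof of this lemma beyond the remark that ``the proof of \cite[Lemma 5.5]{GWSliftingHomotopies} gives the following,'' and your plan of transplanting that argument with Luna's slice theorem replaced by the symplectic slice theorem is exactly what is intended. Your treatments of (1), (3) and (4) are essentially correct (in (1), note that in the local model every point of $(G\times_H N_S)^H$ automatically has slice coordinate in $N_S^H$ and stabilizer exactly $H$, which is what makes $N^{<H>}$ open).

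Part (2), however, has two genuine problems. First, your claim that $GN^H=\{y\in N\mid (G_y)\geq(H)\}$ is closed is false for nonabelian reductive $G$ (and the lemma is applied with $G=\SL_2\times\C^\times$ in Example \ref{ex:non-connected stratum}): for the adjoint module of $\SL_2$ with $H$ a maximal torus, $G\cdot\lieg^H$ is the set of diagonalizable matrices, which is not closed. The closedness of $\pi(N^H)=\bigcup_{(H')\geq(H)}M_{(H')}$ should instead be deduced from the finiteness of the stratification together with the semicontinuity of Lemma \ref{lem:codim}(2), which gives $\overline{M_{(H')}}\subset\bigcup_{(H'')\geq(H')}M_{(H'')}$ for each stratum. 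Second, and more seriously, your justification of $\bigcup_{(H')\geq(H)}M_{(H')}\subset\overline{M_{(H)}}$ does not work: the \emph{generic} closed orbit in the slice shell at a point $z\in N^{<H'>}$ has the \emph{principal} isotropy class of that slice representation, not $(H)$; what you actually need is that $(H)$ occurs among the closed-orbit isotropy classes of the slice representation at $z$ (then the cone structure of the slice shell produces points of $M_{(H)}$ arbitrarily close to $\pi(z)$), and that is essentially the assertion $\pi(z)\in\overline{M_{(H)}}$ being proved, so the argument is circular as written. In the linear model of \cite{GWSliftingHomotopies} this step is free because $V^H$ is a vector space, hence irreducible, so $V^{<H>}$ is dense in $V^H$; here $N^H$ can be reducible (again Example \ref{ex:non-connected stratum}), so the real content you must supply is that $N^{<H>}$ is dense in $N^H$, i.e.\ meets every irreducible component --- for instance by identifying $N^H$ with the shell of the $N_G(H)/H$-module $V^H$ and arguing componentwise there. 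You flag this as ``the main obstacle'' yourself, but the generic-deformation claim you offer does not overcome it.
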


\begin{theorem}\label{thm:strata.connected}
Let $G$, $H$, etc.\ be as above. Suppose that $V^H$ is a $1$-modular $N_G(H)/H$-module. Then $M_{(H)}$ is irreducible.
\end{theorem}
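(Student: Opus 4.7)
The plan is to identify $N^H$ with the shell of $V^H$ for the induced action of $G' := N_G(H)/H$, and then to use the $1$-modular hypothesis together with the affine cone structure to deduce irreducibility. First I would identify $N^H = N \cap (V \oplus V^*)^H$ with the shell $N'$ of the $G'$-module $V^H$. For $(v,\alpha) \in V^H \oplus (V^H)^* = (V \oplus V^*)^H$, the value $\mu(v,\alpha) \in \lieg^*$ vanishes on $\lie h$ (since $\lie h$ acts trivially on $V^H$) and is $\Ad^*(H)$-invariant by the $H$-equivariance of $\mu$. Choosing an $H$-stable decomposition $\lieg = \lie h \oplus \lie m$ via reductivity of $H$ in $G$, these two conditions force $\mu(v,\alpha) \in (\lieg^*)^H \cap \mathrm{ann}(\lie h) = (\lie m^H)^*$, and $\lie m^H$ is canonically identified with $\lieg' = \lie n_{\lieg}(\lie h)/\lie h$. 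Under this identification, $\mu|_{V^H \oplus (V^H)^*}$ coincides with the $G'$-moment map $\mu'$, so $N^H = N'$.

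Second, since $V^H$ is $1$-modular as a $G'$-module by hypothesis, the criterion recalled in Section~\ref{sec:background} shows that $N^H = N'$ is a complete intersection in $V^H \oplus (V^H)^*$. Homogeneity of $\mu'$ makes $N^H$ an affine cone at the origin, hence connected, with every irreducible component containing $0$. By Lemma \ref{lem:general.stuff}, $N^{<H>}$ is a nonempty open subset of $N^H$ (any $x \in N$ with $G_x = H$ lies there), whose complement consists of points with strictly larger $G$-isotropy and forms proper lower-dimensional $G'$-strata of $N'$; hence $N^{<H>}$ is open and dense in the equidimensional complete intersection $N^H$. Moreover, Lemma \ref{lem:general.stuff}(4) presents $N^{<H>} \to M_{(H)}$ as a principal $G'$-bundle onto the smooth stratum $M_{(H)}$, so $N^{<H>}$ is itself smooth.

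Third, it suffices to show $N^H$ is irreducible: then $\overline{M_{(H)}} = \pi(N^H)$ is irreducible by Lemma \ref{lem:general.stuff}(2), and $M_{(H)}$, open and dense in its closure, is irreducible as required. Because $N^{<H>}$ is a smooth open dense subset of the equidimensional cone $N^H$, irreducibility of $N^H$ is equivalent to connectedness of $N^{<H>}$ (distinct irreducible components of $N^H$ would partition $N^{<H>}$ into disjoint nonempty smooth pieces). I would establish connectedness of $N^{<H>}$ using the free $\C^\times$-scaling action on $N^H \setminus \{0\}$, under which $N^{<H>}$ is invariant, together with the $1$-modularity of $V^H$. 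The main obstacle is this last connectedness step: complete intersection plus cone structure alone does not yield irreducibility, so one must genuinely use the $1$-modular hypothesis on $V^H$ as a $G'$-module --- either through the explicit defining equations $\mu' = 0$ and the density of the free $G'$-stratum, or via an analogue of Lemma \ref{lem:smooth=principal} applied to the slice module --- to rule out a nontrivial decomposition of $N^{<H>}$.
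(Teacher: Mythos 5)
Your overall strategy is the same as the paper's: identify $N^H$ with the shell $N_{V^H}$ of the $G'=N_G(H)/H$-module $V^H$, observe via Lemma \ref{lem:general.stuff} that $N^{<H>}$ is a nonempty open subset of it, and push irreducibility down to $M_{(H)}$. The identification of $N^H$ with $N_{V^H}$ and the reduction to irreducibility of $N^H$ are fine.

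But there is a genuine gap exactly where you flag "the main obstacle": you never actually prove that $N^H=N_{V^H}$ is irreducible. You correctly observe that being a complete intersection cone does not force irreducibility and that the $1$-modular hypothesis must be used, but you then only gesture at two possible routes (a $\C^\times$-scaling argument, or "an analogue of Lemma \ref{lem:smooth=principal}") without carrying either out; your proposed equivalence "irreducibility of $N^H$ $\Leftrightarrow$ connectedness of $N^{<H>}$" also quietly assumes that $N^{<H>}$ is dense in $N^H$, i.e.\ that it meets every irreducible component, which is itself not justified. The point of the $1$-modularity hypothesis is precisely that the shell of a $1$-modular module is an irreducible (reduced) complete intersection variety --- this is the content of \cite[Proposition 3.2]{HerbigSchwarzSeaton2}, recalled in \S\ref{sec:background}, and the paper's proof simply invokes it: $N_{V^H}$ is irreducible, hence so is its nonempty open subset $N^{<H>}$, hence so is its image $M_{(H)}$ (no density argument is needed, since a nonempty open subset of an irreducible variety is irreducible). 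So the missing step is closed by a citation rather than a new argument, but as written your proof is incomplete at its decisive point.
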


\begin{proof}
Let $N_{V^H}$ denote the shell of $V^H$ for the action of $N_G(H)/H$. By Lemma \ref{lem:general.stuff}, $N^{<H>}$ is just the open set of principal orbits in $N_{V^H}$. Since $N_{V^H}$ is irreducible, so is $N^{<H>}$. Thus $M_{(H)}$ is irreducible.
\end{proof}
Applying Theorem \ref{thm:i.groups} we obtain the following.
\begin{corollary}\label{cor:strata-connected}
If $G$ is a torus, then every stratum of $N$ is irreducible.
\end{corollary}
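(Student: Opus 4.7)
The plan is to apply Theorem~\ref{thm:i.groups} to reduce to a setting where $V^H$ is a stable $G$-module, identify $N^H$ with the shell of $V^H$, and deduce irreducibility of the stratum $N^{<H>}$ as an open subset of this shell.

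Given $(H)$ the orbit type of a closed orbit in $N$, I would first apply Theorem~\ref{thm:i.groups}: up to change of Lagrangian subspace, $V^H$ becomes a stable $G$-module with principal isotropy group $H$. Since $G$ is a torus, $H$ acts trivially on $V^H$, $N_G(H)=G$, and $V^H$ is a faithful stable module for the torus $G/H$. Lemma~\ref{lem:smooth=principal}(1) then makes $V^H$ $1$-large, in particular $1$-modular, for the $G/H$-action.

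Next I would identify $N^H$ with the shell $N_{V^H}$ of $V^H$ as a $G/H$-module: $V^H\oplus (V^H)^* = (V\oplus V^*)^H$ is a symplectic subspace on which the $G$-moment map reduces to the $G/H$-moment map (the $\lie h$-components vanish because $H$ acts trivially on $V^H$), so $N^H = N_{V^H}$. Lemma~\ref{lem:smooth=principal}(2) then gives that $N^H$ has rational singularities, hence is normal and irreducible. By Lemma~\ref{lem:general.stuff}(1), $N^{<H>}$ is Zariski open in $N^H$ and so is irreducible as well.

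The only subtlety is the UTCLS reduction in Theorem~\ref{thm:i.groups}: it alters the Lagrangian $V \subset U = V\oplus V^*$ but not the pair $(U,G)$ from which $N$ and its intrinsic stratification are defined, so the conclusion about $N^{<H>}$ is unaffected. Beyond this bookkeeping no serious obstacle arises, since the corollary is essentially a direct application of the results already assembled; one could equally well finish by invoking Theorem~\ref{thm:strata.connected} and Lemma~\ref{lem:general.stuff}(4) to transfer irreducibility from $M_{(H)}$ up through the principal $G/H$-bundle $N^{<H>}\to M_{(H)}$, using that $G/H$ is a connected algebraic torus.
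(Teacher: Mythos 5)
Your argument is correct and follows essentially the same route as the paper: Theorem \ref{thm:i.groups} makes $V^H$ a stable (hence $1$-modular) module, $N^{<H>}$ is identified as the open set of principal orbits in the irreducible shell $N_{V^H}$, and irreducibility descends to the stratum; the paper packages this as Theorem \ref{thm:strata.connected} plus Theorem \ref{thm:i.groups}, exactly as in your closing remark. The only cosmetic point is that normality alone does not give irreducibility of $N^H$ — one also uses that the shell is a cone, hence connected (or simply that a $1$-modular shell is irreducible, as the paper asserts).
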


The corollary fails if $G$ is not a torus.

\begin{example}\label{ex:non-connected stratum}
Let $W$ be the  irreducible two-dimensional representation of $\SL_2$ and  let $\nu_i$ be the irreducible  $\C^\times$-module of weight $i$, $i\in\Z$. Let $G=\SL_2\times\C^\times$ and let
$V=2W\oplus  (\nu_1+\nu_{-1})\otimes  W$.
It is easy to see that $V$ is a $1$-modular (and stable) $G$-module so that the shell $N$ is a complete intersection variety. Let $H=\C^\times$. Then $H$ is the principal isotropy group of $V^H\simeq 2W$ and the corresponding symplectic slice representation is $2\nu_0+4(\nu_1+\nu_{-1})$. Hence $X=(N\git G)_{(H)}$ has dimension $2$.

We claim that $X$ has two irreducible components. Now $N^H$ (with its $\SL_2$-action) is the shell of $2W$ and the image of $N^H$ in $N\git G$ is the closure of $X$. By \cite[\S 3]{Becker}  $N^H\git\SL_2$ has two irreducible components which are normal of dimension $2$. Thus $X$ has two irreducible components.
\end{example}

 \begin{remark}\label{rem:bigger,example}
 Let $V=2W\oplus n(\nu_1+\nu_{-1})\otimes W$ with $n\in\N$. We can make $V$ $k$-modular for $k$ arbitrarily large by choosing $n$ large, yet $(N\git G)_{(H)}$ always has two irreducible components.
 \end{remark}

 Now   we assume that $G$ is quasi-toral.

\begin{definition}
We say that a symplectic slice representation $(S=W\oplus W^*,H)$   of $N$
is \emph{of type O} if $\dim H>0$ and $\dim N_0\git H=2$.
\end{definition}

\begin{proposition}\label{prop:codim}
Let $G$ be quasi-toral of rank $\ell>0$. Let $V$ be a
$1$-modular faithful  $G$-module of dimension $n$ with shell $N$ where $V^G=(0)$.
\begin{enumerate}
\item The null cone $\NN(V\oplus V^*)$ is a subset of $N$, hence equals $\NN(N)$.
\item If $n-\ell\geq 2$, then $\codim_N(N_\sing\cap \NN(N)) \geq 4$.
\item If $n-\ell=1$, then $\codim_N(N_\sing\cap \NN(N))\geq  3$. In fact, there is equality.
\end{enumerate}
\end{proposition}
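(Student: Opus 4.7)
The plan is to reduce to the case $G = G^0$ a torus and then analyze $N_\sing \cap \NN(N)$ combinatorially via the weight matrix $A$. Since $G^0$ acts trivially on $\lieg^*$ (the coadjoint action of a torus is trivial) and $G/G^0$ is finite (so $0\in\overline{G\cdot(v,w)}$ iff $0\in\overline{G^0\cdot(v,w)}$), the loci $N$, $N_\sing$, and $\NN(N)$ coincide with those for the $G^0$-action. For part (1), the $G^0$-invariance of $\mu$ immediately gives: if $(v,w)\in\NN(V\oplus V^*)$, then $\mu(v,w)=\mu(0)=0$, placing $(v,w)$ in $N$; the inclusion $\NN(N)\subset\NN(V\oplus V^*)$ is automatic since $N$ is closed, so the two sets coincide.

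For parts (2) and (3) I would combine two combinatorial descriptions. The Jacobian of $\mu=\bigl(\sum_i a_{k,i}v_iw_i\bigr)_k$ at $(v,w)$ has columns that are scalar multiples of $A_i$ for indices with $(v_i,w_i)\neq 0$, so $(v,w)\in N_\sing$ iff $\{A_i:(v_i,w_i)\neq 0\}$ fails to span $\C^\ell$. The Hilbert--Mumford criterion says $(v,w)\in\NN$ iff there is $m\in\Z^\ell$ with $\langle m,\chi_i\rangle>0$ on $\sigma:=\{i:v_i\neq 0\}$ and $\langle m,\chi_i\rangle<0$ on $\tau:=\{i:w_i\neq 0\}$, which forces $\sigma\cap\tau=\emptyset$. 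Thus $N_\sing\cap\NN(N)$ is a union of locally closed coordinate pieces $C_{\sigma,\tau}$ of dimension $|\sigma|+|\tau|$, indexed by disjoint pairs satisfying both the Hilbert--Mumford condition and $\rank(A_{\sigma\cup\tau})<\ell$. The $1$-modularity hypothesis (Lemma \ref{lem:k-modular-torus}) says every $\ell\times(n-1)$ submatrix of $A$ has rank $\ell$, so rank drop forces $|\sigma\cup\tau|\leq n-2$. This yields $\dim(N_\sing\cap\NN(N))\leq n-2$, hence
\[
\codim_N(N_\sing\cap\NN(N))\geq(2n-\ell)-(n-2)=n-\ell+2,
\]
giving $\geq 4$ when $n-\ell\geq 2$ and $\geq 3$ when $n-\ell=1$, which proves (2) and the inequality in (3).

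For equality in (3), set $n=\ell+1$. First observe that no weight $\chi_i$ vanishes: if $A_i=0$, then deleting any other column from $A$ would leave an $\ell\times\ell$ submatrix of rank at most $\ell-1$, contradicting $1$-modularity. Then I exhibit the stratum with $\sigma=\{1,\dots,\ell-1\}$ and $\tau=\emptyset$: by $1$-modularity any $\ell$ columns of $A$ are independent, so $A_1,\dots,A_{\ell-1}$ are linearly independent (hence have rank $\ell-1<\ell$) and a linear functional positive on all of them exists, because linearly independent vectors lie in an open half-space. This stratum lies in $N_\sing\cap\NN(N)$ and has dimension $\ell-1=n-2$, matching the upper bound and forcing codimension exactly $3$. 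The argument is essentially a clean dimension count, and the main step that takes any thought is assembling the description of $N_\sing\cap\NN(N)$ as a union of coordinate strata $C_{\sigma,\tau}$ and recognizing that $1$-modularity is precisely the combinatorial hypothesis needed to control $|\sigma\cup\tau|$.
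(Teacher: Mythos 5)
Your argument is correct and follows essentially the same route as the paper: part (1) comes from the invariance of the quadratic components of $\mu$, and the codimension bounds come from the same dimension count --- a point of $\NN(N)$ has disjoint supports in $V$ and $V^*$, and lying in $N_\sing$ forces the combined support to have size at most $n-2$ by $1$-modularity (the paper phrases this via the linear pieces $Z_\lambda$ and invariant monomials obtained after reducing to stable $V$, you via the rank criterion of Lemma \ref{lem:k-modular-torus}; these are equivalent). Your explicit witness stratum $\sigma=\{1,\dots,\ell-1\}$, $\tau=\emptyset$ for equality in (3) is a clean, fully verified version of the paper's assertion that suitable one-parameter subgroups $\lambda$ exist.
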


\begin{proof}  By \cite[Proposition 3.2]{HerbigSchwarzSeaton2}, $N_\sing=\{x\in N\mid \dim G_x>0\}$. We may assume that
$G=G^0$ and that $V$ is stable.
Part (1) is clear since the entries of $\mu$ are homogeneous quadratic
invariants.
Let $v_1,\dots,v_n$ be a basis of $V$ corresponding to characters $\chi_1,\dots,\chi_n$ of $G$. The characters of $V^*$ are $\chi_1\inv,\dots,\chi_n\inv$. We know that
the null cone
$\NN(V\oplus V^*)$ is a finite union of linear subspaces $Z_\lambda:=V_\lambda\oplus V^*_\lambda$ where $\lambda\colon \C^\times\to G$ is a homomorphism. Here $V_\lambda$ denotes the direct sum of the strictly positive weight spaces of $\lambda$ acting on $V$ and similarly for $V^*_\lambda:=(V^*)_\lambda$.  Note that if $\lambda$ is not the constant homomorphism (which we may assume), then $V_\lambda\neq (0)$.   WLOG assume that $V_\lambda$ is the span of $v_1,\dots,v_k$.  Then $V^*_\lambda$ is at worst the span of the $n-k$ basis elements $\xi_{k+1},\dots,\xi_n$ corresponding to $\chi_{k+1}\inv,\dots,\chi_n\inv$.

 Let $(v,\xi)\in Z_\lambda\cap N_\sing$.  Since $V$ is stable, for any $1\leq i\leq n$ there is a monomial invariant $\prod x_j^{a_j}$ where all $a_j$ are nonnegative and $a_i>0$.   Thus  any cardinality $n-1$ subset of $\{\chi_1,\dots,\chi_n\}$ has finite joint kernel. and
 $$
 (v,\xi)\in Z_\lambda\cap N_\sing
 $$
   only if at least two  of $v_1,\dots,v_k,\xi_{k+1},\dots,\xi_n$ are zero. Hence
 $$
 \codim_N (N_\sing\cap \NN(N))\geq \dim N-(n-2)=2n-\ell-n+2=(n-\ell)+2.
 $$
This immediately gives the inequalities of (2) and (3). If $n-\ell=1$, then $\dim \C[V]^G=1$ and the generating invariant monomial involves all the coordinate functions $x_1,\dots,x_n$. Thus, additively, there is a relation
$$
\sum_{i=1}^n a_i\chi_i=0
$$
and there are no other linearly independent relations since $\dim G=n-1$. Thus no subset of cardinality $n-2$ of $\{\chi_1,\dots,\chi_n\}$ has finite joint kernel and there are $\lambda$ such that $  Z_\lambda\cap  N_\sing$ has dimension $n-2$.
It follows that $\codim_N(N_\sing\cap \NN(N))=3$.
\end{proof}

Using Lemmas \ref{lem:strata} and \ref{lem:codim} we obtain the following.
 \begin{corollary}\label{cor:codim}
Let $G$ be quasi-toral and $V$ a faithful $1$-modular $G$-module with shell $N$. The following are equivalent.
 \begin{enumerate}
\item $\codim_NN_\sing\geq 4$.
\item $N$ has no symplectic slice representation of type O.
\item If $(N\git G)_{(H)}$ is a stratum of codimension $2$, then $H$ is finite.
\end{enumerate}
 \end{corollary}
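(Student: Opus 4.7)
I would deduce the three equivalences from Lemma \ref{lem:codim}(1), Lemma \ref{lem:strata}(2), and Proposition \ref{prop:codim}. The equivalence $(2)\Leftrightarrow(3)$ is immediate: Lemma \ref{lem:codim}(1) identifies $\codim_{N\git G}(N\git G)_{(H)}$ with $\dim N_0 \git H$, and the requirement $\dim H > 0$ appears in both definitions, so the negations of (2) and (3) assert literally the same condition.

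For $(1)\Leftrightarrow(2)$, the plan is to stratify $N_\sing$ by isotropy class of the associated closed orbit and apply Proposition \ref{prop:codim} to each resulting slice. Since UTCLS we may assume $V$ is stable (Proposition \ref{prop:make.stable}(1)), and by splitting off the trivial summand we may also assume $V^G = 0$; neither reduction alters the three conditions. Stability of $V$ and \cite[Proposition 3.2]{HerbigSchwarzSeaton2} identify $N_\sing$ with the locus of positive-dimensional $G$-isotropy. For a class $(H)$ with $H$ finite, $\lie h = 0$ forces the shell $N_S$ of the symplectic slice to be a vector space, and the local model $G \times_H N_S$ provided by the symplectic slice theorem is smooth, so $N^{(H)}$ contributes nothing to $N_\sing$. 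Hence $N_\sing = \bigcup_{(H),\,\dim H > 0}(N_\sing\cap N^{(H)})$, where $(H)$ ranges over isotropy classes of closed orbits, and Lemma \ref{lem:strata}(2) rewrites the codimension of each piece as $\codim_{N_0}((N_0)_\sing\cap\NN(N_0))$.

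The key computation is then Proposition \ref{prop:codim} applied to $(W_0, H)$. Writing $n_0 = \dim W_0$ and $\ell_0 = \dim H$, parts (2) and (3) of that proposition give $\codim_{N_0}((N_0)_\sing\cap\NN(N_0))\geq 4$ when $n_0 - \ell_0 \geq 2$ and equality to $3$ when $n_0 - \ell_0 = 1$. Since $\dim N_0 \git H = 2(n_0 - \ell_0)$, type O corresponds exactly to $n_0 - \ell_0 = 1$. Consequently, if no type O slice exists every contributing stratum has codimension $\geq 4$ in $N$, yielding $\codim_N N_\sing \geq 4$; and the presence of a type O slice produces one stratum of codimension exactly $3$, yielding $\codim_N N_\sing \leq 3$.

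The main subtlety lies in verifying the hypotheses of Proposition \ref{prop:codim} for $(W_0, H)$: the conditions $W_0^H = 0$ and $1$-modularity are automatic (the latter by Lemma \ref{lem:smooth=principal}(3)), but $H$ may not act faithfully on $W_0$. In that case the kernel acts trivially on $N_0$ and one passes to $H/\ker$ without changing $N_0$ or its quotient; positivity of $\dim H/\ker$ is ensured because if $H^0$ were contained in the kernel then $H^0$ would act trivially on the entire symplectic slice and thus lie inside the generic $G$-isotropy near the closed orbit, contradicting the FPIG property of $V$ provided by Lemma \ref{lem:smooth=principal}(1).
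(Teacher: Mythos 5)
Your argument is correct and is exactly the route the paper intends: the paper derives this corollary in one line from Lemmas \ref{lem:strata} and \ref{lem:codim} together with the codimension estimates of Proposition \ref{prop:codim}, and your write-up is a faithful expansion of that, including the correct handling of the two genuine technicalities (discarding finite-isotropy strata from $N_\sing$, and replacing $H$ by its image in $\GL(W_0)$ before invoking Proposition \ref{prop:codim}). No gaps.
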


 \begin{corollary}\label{cor:type.O.G^0.vs.G}
 The $G$-variety $N$ has slice representations of type O if and only if the same is true for $N$ as $G^0$-variety.
 \end{corollary}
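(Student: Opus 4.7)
The plan is to reduce the equivalence to the variety-theoretic criterion of Corollary \ref{cor:codim} applied separately to the $G$-action and the $G^0$-action on $V$, exploiting the fact that the shell $N$ is literally the same subvariety of $V\oplus V^*$ in both cases.

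First I would observe that the moment map $\mu\colon V\oplus V^*\to\lieg^*$ is defined purely in terms of the infinitesimal action of $\lieg$ on $V\oplus V^*$, so the shell $N=\mu\inv(0)$ depends only on $\lieg$ together with this action. Since $G^0\subset G$ has finite index, $\lieg$ is simultaneously the Lie algebra of $G$ and of $G^0$, and the two infinitesimal actions on $V\oplus V^*$ coincide. Consequently $N$, and in particular its singular locus $N_\sing$, is the same subvariety of $V\oplus V^*$ whether we view things as a $G$-variety or a $G^0$-variety. Thus $\codim_N N_\sing$ is a single number that does not know which of the two groups we are using.

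Next I would verify that the hypotheses of Corollary \ref{cor:codim} are satisfied for the $G^0$-action on $V$: the connected component $G^0$ is itself a torus, hence quasi-toral; faithfulness of $V$ as a $G^0$-module follows immediately from faithfulness as a $G$-module; and $1$-modularity of $V$ as a $G^0$-module follows from Lemma \ref{lem:k-modular-torus}, since the criterion there is read off the weight matrix $A$ of the torus $G^0$ acting diagonally, which is unchanged when we restrict from $G$ to $G^0$. Corollary \ref{cor:codim} therefore applies to both actions, so in each case the existence of a symplectic slice representation of type O is equivalent to the single condition $\codim_N N_\sing\leq 3$. Since this condition does not depend on the acting group, the two existence statements coincide.

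The argument is essentially bookkeeping and I do not expect a substantive obstacle. The only point to guard against is accidentally invoking structure that genuinely depends on the group in question (for example stratum decompositions of $N\git G$ versus $N\git G^0$, or individual slice representations, which do differ since the isotropy of a point in $G$ need not equal its isotropy in $G^0$); routing the proof through the variety-level codimension criterion provided by Corollary \ref{cor:codim} bypasses all of these.
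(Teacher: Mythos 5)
Your proposal is correct and is essentially the paper's argument: the corollary is stated immediately after Corollary \ref{cor:codim} with no separate proof, precisely because the criterion $\codim_N N_\sing\geq 4$ is a property of the subvariety $N\subset V\oplus V^*$ alone, which is unchanged when passing from $G$ to $G^0$ (same Lie algebra, same moment map, same shell), while faithfulness and $1$-modularity pass to $G^0$ as you note. Your explicit checks of the hypotheses of Corollary \ref{cor:codim} for the $G^0$-action are exactly the bookkeeping the paper leaves implicit.
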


\begin{lemma}\label{lem:H-type-O} Let $G$ be a torus and $V$ a $1$-modular faithful $G$-module. Let $H$ be the isotropy group of a nonzero closed orbit in $N$.   Let $W$ be a $G$-complement to $V^H$ in $V$. Then $H$ is the isotropy group of  a slice representation of type O if and only if $\dim W=\dim H+1$.
 \end{lemma}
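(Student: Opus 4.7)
The plan is a direct dimension count for the symplectic slice representation $(S,H)$ at a closed orbit in $N$ with isotropy $H$, tracking how $W$ embeds into the Lagrangian slice.

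By Theorem~\ref{thm:i.groups} we may assume UTCLS that $V$ is stable, $V^H$ is a stable $G$-module, and $H$ is the principal isotropy group of $V^H$. Let $y$ lie in a closed $G$-orbit in $N$ with $G_y=H$; then $y\in U^H=V^H\oplus(V^H)^*$ and, since $G$ is commutative, $T_y(Gy)=\lieg\cdot y\subset U^H$. Decompose $U$ $H$-equivariantly as $U^H\oplus(W\oplus W^*)$, an orthogonal direct sum of symplectic subspaces. Since the moment map is bilinear in $(v,\xi)$ and $y$ has zero $W$-coordinates, $d\mu_y$ vanishes on $W\oplus W^*$, so this summand lies in $T_yN$ and is complementary to $U^H\supset T_y(Gy)$. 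The symplectic slice theorem then yields an $H$-equivariant symplectic splitting $S=S^H\oplus(W\oplus W^*)$; writing $S\simeq W_{\mathrm{slice}}\oplus W_{\mathrm{slice}}^*$ with $W_{\mathrm{slice}}=W_{\mathrm{slice}}^H\oplus W_0$, this identifies $W_0\simeq W$ as $H$-modules.

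With $W_0\simeq W$ in hand, the conclusion is routine dimension counting. By Lemma~\ref{lem:smooth=principal}(3), $W_0$ is $1$-modular, and $H$ acts faithfully on it (since $G$ acts faithfully on $V$ and trivially on $V^H$), so \cite[Proposition~3.2(i)]{HerbigSchwarzSeaton2} gives $\dim N_0=2\dim W-\dim H$. Applying Lemma~\ref{lem:smooth=principal}(1) to the torus $H^0$ acting on $W_0$ yields trivial generic $H^0$-stabilizer on $N_0$, hence finite generic $H$-stabilizer, so $\dim N_0\git H=\dim N_0-\dim H=2\dim W-2\dim H$. Thus $(S,H)$ is of type~O---that is, $\dim H>0$ and $\dim N_0\git H=2$---precisely when $\dim W=\dim H+1$. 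The main obstacle is the splitting $S=S^H\oplus(W\oplus W^*)$; it rests on $T_y(Gy)\subset U^H$ and $d\mu_y|_{W\oplus W^*}=0$, both of which use the commutativity of $G$ and the choice $y\in U^H$.
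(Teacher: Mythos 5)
Your proof is correct and follows essentially the same route as the paper: reduce via Theorem~\ref{thm:i.groups} to the case where $V$ is stable and $H$ is the principal isotropy group of $V^H$, identify the symplectic slice representation of $H$ (up to trivial factors) with its action on $W$, and conclude by a dimension count; your explicit verification via $d\mu_y$ is just a spelled-out version of what the paper cites as standard. The one point you use silently is that $W$ is a \emph{stable} $H$-module (needed to invoke Lemma~\ref{lem:smooth=principal}(1) and to get $\dim N_0\git H=\dim N_0-\dim H$), which the paper notes follows from stability of $V$.
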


 \begin{proof}
 By Theorem  \ref{thm:i.groups}, we may assume that $V$ is stable and that $V^H$ is   a stable $G$-submodule with principal isotropy group $H$. The slice representation of $H$, up to trivial factors, is its action on $W$, which is stable since $V$ is stable. It follows that the symplectic slice representation of $H$ has type O if and only if $\dim W=\dim H+1$.
 \end{proof}

 Let $H$ be a subtorus of $G$ such that $V=V^H\oplus W$ as a $G$-module where $W$ is a $1$-modular  $H$-module such that $\dim W=\dim H+1$.    We say that \emph{$(W,H)$ is maximal\/} if, UTCLS,
 there is no subtorus $H'$ of $G$ and decomposition $V=V^{H'}\oplus W'$ where $W'$ is a $1$-modular $H'$-module,  $\dim W'=\dim H'+1$,  $H\subset H'$, $W\subset W'$ and $\dim H'>\dim H$.

\begin{lemma}
In the situation of Lemma \ref{lem:H-type-O}, suppose that $\dim W=\dim H+1$. Then $(W,H)$ is maximal.
\end{lemma}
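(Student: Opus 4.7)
The plan is to argue by contradiction. Suppose $(W,H)$ is not maximal, so there is a subtorus $H'\supsetneq H$ of $G$ and a $G$-module decomposition $V=V^{H'}\oplus W'$ such that $W'$ is a $1$-modular $H'$-module with $\dim W'=\dim H'+1$ and $W\subset W'$. Let $d=\dim H'-\dim H\geq 1$ and choose a $G$-complement $W''$ with $W'=W\oplus W''$. Intersecting the decompositions $V=V^H\oplus W=V^{H'}\oplus W'$ with $V^H$ gives $V^H=V^{H'}\oplus(W')^H$; since $W^H=0$, the summand $(W')^H$ equals $(W'')^H$. A dimension count, $\dim V^H-\dim V^{H'}=\dim H'-\dim H=d=\dim W''$, then forces $(W'')^H=W''$. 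Hence $H$ acts trivially on $W''$, so $W''\subset V^H$ and $V^H=V^{H'}\oplus W''$ as $G$-modules.

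Since we are already in the setup of Lemma \ref{lem:H-type-O}, Theorem \ref{thm:i.groups} (applied UTCLS) yields that $V^H$ is a stable $G$-module with principal isotropy $H$. Equivalently, $V^H$ is a faithful stable $G/H$-module, so by Lemma \ref{lem:smooth=principal}(1) it is $1$-large and in particular $1$-modular over $G/H$. Thus any $\dim V^H-1$ characters of $V^H$ span $X^*(G/H)\otimes\Q$.

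To conclude, consider the quotient $q\colon X^*(G/H)\otimes\Q\to X^*(H'/H)\otimes\Q$, with kernel $X^*(G/H')\otimes\Q$ and target of dimension $d$. Every character of $V^{H'}$ lies in $\ker q$, whereas no character appearing in $W''$ does, since such a weight vector would lie in $V^{H'}\cap W''=0$. Deleting any one of the $d$ characters of $W''$ therefore leaves at most $d-1$ of the remaining $\dim V^H-1$ characters of $V^H$ with nonzero $q$-image, so their joint $q$-image has dimension at most $d-1$. But $1$-modularity forces these characters to span $X^*(G/H)\otimes\Q$, hence their $q$-image exhausts $X^*(H'/H)\otimes\Q$, of dimension $d$. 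This yields $d\leq d-1$, a contradiction, so $(W,H)$ is maximal.

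The main obstacle is spotting that non-maximality places the $G$-complement $W''$ inside $V^H$ (via the dimension count for $(W')^H$); once this identification is made, the quotient $q$ turns $1$-modularity of $V^H$ over $G/H$ into the required dimension bound with essentially no further work.
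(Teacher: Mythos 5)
Your proof is correct, and it takes a genuinely different route from the paper's. The paper argues geometrically: if $(W,H)$ were not maximal, then by Lemma \ref{lem:general.stuff} the stratum $M_{(H')}$ would lie in the boundary of $M_{(H)}$ while both strata have codimension two in $M$ (by Lemma \ref{lem:codim}, since both slice data have two-dimensional symplectic quotients), which is impossible. You instead work entirely with weights: the key observation that the complement $W''$ of $W$ in $W'$ must sit inside $V^H$ (via the dimension count $\dim V^H-\dim V^{H'}=d=\dim W''$) is correct, as is the subsequent rank argument using the restriction map $q$ to $X^*(H'/H)\otimes\Q$. One point worth emphasizing is that your argument genuinely needs the $1$-modularity of $V^H$ as a $G/H$-module, which does not follow formally from $1$-modularity of $V$ over $G$; you correctly route this through Theorem \ref{thm:i.groups} (so that $H$ is the kernel of $G\to\GL(V^H)$ and $V^H$ is faithful and stable over $G/H$) and Lemma \ref{lem:smooth=principal}(1), and the changes of Lagrangian involved only flip signs of characters, which is harmless for all the rank computations. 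The trade-off: your argument is more elementary and self-contained at the level of weight matrices, while the paper's is a one-line consequence of the stratification machinery it has already built (irreducibility and local closedness of strata, and the codimension formula).
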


\begin{proof}
We may assume that $W$ is a stable $H$-module. If $(W,H)$ were not maximal, then by Lemma \ref{lem:general.stuff}, some $M_{(H')}$ would lie in the boundary of  $M_{(H)}$ where both have codimension two in $M$. This  is not possible.
\end{proof}

\begin{corollary}\label{cor:maximal}
Let $H$ be a subtorus of $G$ such that $V=V^H\oplus W$ as a $G$-module where $W$ is a $1$-modular  $H$-module such that $\dim W=\dim H+1$. Then the following are equivalent.
\begin{enumerate}
\item Up to trivial factors,  $(W\oplus W^*,H')$ is a symplectic slice representation of $N$ of type O where $H'$ is a finite extension of $H$.
\item $(W,H)$ is maximal.
\end{enumerate}
\end{corollary}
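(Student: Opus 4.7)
The plan is to reduce to the case $G=G^0$ a torus by invoking Corollary \ref{cor:type.O.G^0.vs.G}, since type-O slice representations are insensitive to passing between $G$ and $G^0$, and the maximality of $(W,H)$ is a condition on subtori (which coincide for $G$ and $G^0$).

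The direction $(1)\Rightarrow(2)$ follows from the preceding results. If $(W\oplus W^*, H')$ is a symplectic slice representation of type O with $(H')^0 = H$, then after reducing to $G^0$, $H$ itself is the isotropy of a nonzero closed $G^0$-orbit in $N$. Lemma \ref{lem:H-type-O} applied to the torus $G^0$ identifies the type-O condition with the dimensional relation $\dim W = \dim H + 1$, which is part of the hypothesis, and the unnumbered lemma immediately preceding the corollary then yields the maximality of $(W, H)$.

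The direction $(2)\Rightarrow(1)$ is the main content. The key claim, from which the rest follows, is that maximality of $(W, H)$ implies that $V^H$ is a $1$-modular $G/H$-module. Granting this, Remark \ref{rem:1-modular} combined with Lemma \ref{lem:smooth=principal} makes $V^H$ a stable $G/H$-module with trivial principal isotropy. The intersection $N \cap (V^H \oplus (V^H)^*)$ is then the shell of $V^H$ for the $G/H$-action, and a generic point $(v,\xi)$ in it lies on a closed $G$-orbit with isotropy $H'$ satisfying $(H')^0 = H$. Since $W^H = 0$ by construction, the symplectic slice representation at this orbit is $(W \oplus W^*, H')$ with no trivial factor, and the dimension count $\dim N_W \git H' = 2\dim W - 2\dim H = 2$ confirms type O.

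To establish the key claim I argue by contrapositive. If $V^H$ fails to be $1$-modular as a $G/H$-module, Lemma \ref{lem:k-modular-torus} supplies a $(\dim V^H - 1)$-subset of its weights of rank strictly less than $\dim G - \dim H$. The orthogonal cocharacter of $G/H$ lifts to a primitive cocharacter $\lambda$ of $G$, whose image together with $H$ generates a subtorus $H'' := H \cdot \lambda(\C^\times)$ strictly containing $H$ of dimension $\dim H + 1$. Choosing the deficient subset so that exactly one coordinate $v_k \in V^H$ lies outside $V^{H''}$, one obtains a decomposition $V = V^{H''} \oplus W''$ with $W'' := W \oplus \C v_k$ and $\dim W'' = \dim H'' + 1$. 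A matrix bookkeeping argument, using the $1$-modularity and faithfulness of $V$ together with the relation $\langle \chi_k, \lambda \rangle \neq 0$ built into the choice of $\lambda$, then confirms that $W''$ is a $1$-modular $H''$-module; hence $(W'', H'')$ witnesses the failure of maximality of $(W, H)$. The principal obstacle is this final combinatorial verification, which amounts to checking that every $(\dim H'')$-subset of columns of the $H''$-weight matrix on $W''$ has maximal rank.
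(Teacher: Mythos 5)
Your overall architecture agrees with the paper's: the direction $(1)\Rightarrow(2)$ is immediate from Lemma \ref{lem:H-type-O} and the unnumbered lemma, and the substance of $(2)\Rightarrow(1)$ is to show that maximality forces $(V^H,G/H)$ to be $1$-modular, after which stability and trivial principal isotropy produce the type-O slice exactly as you describe. Where you genuinely diverge is in proving the key claim. The paper sets $K=\Ker(G\to\GL(W))$, chooses an auxiliary one-dimensional $R$ acting on $W$ by a nontrivial character, and extracts a witness to non-maximality from the stratification of $V^H$ by $K$-isotropy dimension (a subspace $L\subset\overline{V^H_{(r)}}$ fixed by $R$, with witness $(P\oplus W,\,K_1RH)$, which may jump several dimensions). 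Your route via Lemma \ref{lem:k-modular-torus} always produces a witness exactly one dimension up and is, in my view, more transparent --- but only if the deferred verification is actually carried out, and that verification is the real content, not bookkeeping.

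Two points there. First, the condition $\langle\chi_k,\lambda\rangle\neq 0$ is not something you get to ``choose'': a deficient set $S$ of $n_H-1$ weights has rank at most $\ell-\dim H-1$, while $1$-modularity of $V$ (delete the column $\chi_k$ from $A$) forces $S$ together with the $\dim H+1$ weights of $W$ to have rank $\ell$; hence $\operatorname{rank}S=\ell-\dim H-1$ exactly, $\lambda$ is unique up to scalar, the $W$-weights have rank $\dim H+1$ over $G$ with $\Span_\Q(S)\cap\Span_\Q(W\text{-weights})=0$, and $\chi_k\notin\Span_\Q(S)$ gives $\langle\chi_k,\lambda\rangle\neq0$ automatically. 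Second, and more seriously, $\langle\chi_k,\lambda\rangle\neq0$ only handles the $(\dim H'')$-subsets of columns of $W''$ that contain the $v_k$-column; the subset consisting of all $\dim H+1$ weights of $W$ requires you to show that $H''=H\cdot\lambda(\C^\times)$ acts on $W$ with finite kernel, which could a priori fail (e.g.\ if $\lambda$ killed the weight $\sum_i m_iw_i$ of the $H$-invariant monomial). To rule this out you must argue: any $H''$-relation among the $W$-weights restricts on $H$ to a multiple of $\vec m$, so it forces $\sum_i m_iw_i$ to be a character of $G/H$ annihilated by $\lambda$, hence to lie in $\Span_\Q(S)$; by the direct-sum statement above this gives $\sum_i m_iw_i=0$ over $G$, contradicting $\operatorname{rank}(W\text{-weights})=\dim H+1$. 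With that argument supplied your construction is correct and complete; as written, the proof stops just short of its hardest step.
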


\begin{proof} We only have to show that (2) implies (1).
Let $\rho\colon G\to  \GL(W)$ give the $G$-module structure on $W$ and let  $K=\Ker\rho$.  We may assume that $V$ is stable. Then $V^H$ is a stable $K$-module. If $\dim G/H=\dim K$, then $V^H$ is a stable $G$-module whose principal isotropy group has dimension $\dim H$, and (1) follows. Otherwise, $\dim G/H=\dim K+1$ and we may choose a $1$-dimensional subtorus $R\subset G$ such that the product mapping $K\times R\to G/H$ has finite kernel and such that $R$ acts on $W$ by a character $\chi\neq 1$.

Since $V^H$ is a stable $K$-module with finite principal isotropy group, the action is   $1$-modular.   Let $V^H_{(r)}$ denote the points of $V^H$ whose $K$-isotropy group has dimension $r$. Since $G$ is commutative, the closure $\overline{V^H_{(r)}}$ is a finite union of $G$-submodules of $V^H$. If $(V^H,K  R)$ is not $1$-modular, then  for some $r\geq 0$,  $R$ fixes a linear subspace $L\subset \overline{V^H_{(r)}}$ of codimension $r+1$ in $V^H$.   Let $P$ be a $G$-complement to $L$ in $V^H$. Let $K_1=\Ker(K\to\GL(L))$ . Then $\dim K_1=r$, $\dim P=r+1$ and  $K_1$
acts stably on $P$ with a quotient of dimension $1$. Thus, by perhaps changing $W$ to $W^*$, $K_1 R  H$ acts trivially on $L$ and stably on its complement $P\oplus W$ with a quotient of dimension $1$. This   contradicts maximality of $(W,H)$. Hence $(V^H,G/H)$ is $1$-modular and we may assume that it is stable. We can assume that  the principal isotropy group $H'$ of $V^H$ acts stably on $W$ in which case we have $\dim H=\dim H'$ and (1) holds.
\end{proof}

Let $G$ be a torus and  $V$ a $1$-modular faithful $G$-module. Suppose that  $H$ is the isotropy group of a slice representation of type O. Then $V^H$ is a $1$-modular $G/H$-module.
The following is Proposition \ref{mainprop:1}.

\begin{corollary}\label{cor:matrixA}
Let $G$ be a torus of rank $\ell>0$ and let $V$ be a faithful $1$-modular $G$-module of dimension $n$ with associated matrix $A$ and shell $N$.  Then $N$ has a symplectic slice representation of type O if and only if the following holds.

\smallskip
\noindent $ (*) $  \quad There is an $r\in\N$, $1\leq  r\leq \ell$, and $n-r-1$ columns of $A$ of rank $\ell-r$.
\end{corollary}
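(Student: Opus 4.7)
The plan is to prove both directions of the equivalence by translating between condition $(*)$ on the matrix and a pair $(W, H)$ to which Corollary \ref{cor:maximal} applies.

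For the forward direction, assume that $N$ carries a symplectic slice representation of type O with isotropy group $H$ of dimension $r \geq 1$. By Theorem \ref{thm:i.groups}, UTCLS (an operation that only negates columns of $A$ and thus preserves $(*)$), we may assume that $V$ is stable, $V^H$ is a stable $G$-submodule, and $H$ is the principal isotropy group of $V^H$. Lemma \ref{lem:H-type-O} then forces $\dim W = r + 1$ where $W$ is a $G$-complement of $V^H$. Since $G$ is a torus, $V^H = \Span\{v_i : i \in T_0\}$ for some subset $T_0$ of size $n - r - 1$. The principal isotropy of $V^H$ at a generic point is $\bigcap_{i \in T_0} \ker \chi_i$, so equating this with $H$ yields $\dim H = \ell - \rank\{A_i : i \in T_0\}$; hence $\rank\{A_i : i \in T_0\} = \ell - r$, and $(*)$ holds.

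For the converse, assume $(*)$ and choose $T_0$ witnessing it. We construct a pair $(W, H)$ satisfying the hypotheses of Corollary \ref{cor:maximal}. Set $H := \bigl(\bigcap_{i \in T_0} \ker \chi_i\bigr)^0$, a subtorus of dimension $r$, and $V' := \Span\{v_i : i \in T_0\}$; the crucial step is to show that $V^H = V'$. Clearly $V' \subseteq V^H$, and if $T := \{j : v_j \in V^H\}$ strictly contained $T_0$, then since every $A_j$ with $j \in T$ lies in the $\Q$-span of $\{A_i : i \in T_0\}$, the set $\{A_j : j \in T\}$ has rank $\ell - r$, and any $(n-1)$-element subset of columns containing it has rank at most $(\ell-r) + (n-1-|T|) < \ell$, contradicting $1$-modularity of $V$ via Lemma \ref{lem:k-modular-torus}. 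Hence $T = T_0$, and the $G$-complement $W := \Span\{v_j : j \notin T_0\}$ has $\dim W = r + 1 = \dim H + 1$.

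We then check that $W$ is a faithful $1$-modular $H$-module. Faithfulness is immediate from faithfulness of $V$ on $G \supseteq H$. For $1$-modularity via Lemma \ref{lem:k-modular-torus}, we need every $r$-element subset of the weights of $W$ restricted to $H$ to have rank $r$, which, for each $j_0 \notin T_0$, is equivalent to the columns $\{A_i : i \in T_0\} \cup \{A_j : j \notin T_0,\, j \neq j_0\}$ spanning $\Q^\ell$; this is exactly $1$-modularity of $V$ applied to the $(n-1)$-subset of columns omitting $A_{j_0}$. Thus $(W, H)$ meets the hypotheses of Corollary \ref{cor:maximal}. If $(W, H)$ is maximal, that corollary directly yields the desired type O slice representation; if not, the definition of maximality produces a strictly larger pair satisfying the same hypotheses, and since $\dim G$ is finite, iteration terminates at a maximal pair to which the corollary applies. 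The main obstacle is expected to be pinning down $V^H = V'$ together with the parallel argument that $W$ inherits $1$-modularity over $H$; both hinge on the numerical coincidence $|T_0| + r = n - 1$ combined with the $1$-modularity of $V$.
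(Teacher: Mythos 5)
Your proof is correct and follows essentially the same route as the paper: the forward direction via Theorem \ref{thm:i.groups} and Lemma \ref{lem:H-type-O}, and the converse by taking $H$ to be (the identity component of) the joint kernel of the $n-r-1$ characters, forming the complement $W$ of dimension $r+1$, passing to a maximal pair, and invoking Corollary \ref{cor:maximal}. You simply make explicit two verifications the paper leaves implicit (that $V^H$ is exactly the span of the chosen columns, and that $W$ is $1$-modular over $H$), and both of your arguments for these are sound consequences of $1$-modularity of $V$.
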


\begin{proof}
Since $V$ is $1$-modular, $n>\ell$. We may assume that $V$ is stable which only requires changing the sign of some of the columns of the $\ell\times n$ matrix $A$. Let $\chi_1,\dots,\chi_n$ be the characters of $G$ corresponding to the columns of $A$.  If $(*)$ holds, we may assume that the joint kernel $H$ of $\chi_1,\dots,\chi_{n-r-1}$ has rank  $r\geq 1$. Then $H$ acts stably  on the  subspace $W$ of $V$  spanned by the last $r+1$ basis elements. We may assume that $(W,H^0)$ is maximal, in which case  Corollary \ref{cor:maximal} shows that $N$ has a slice representation of type O. Conversely, if $N$ has a symplectic slice representation of type O, then  Lemma \ref{lem:H-type-O} shows that  $A$ satisfies   $(*)$.
\end{proof}

\section{Determining $\dim G$ topologically}

 Let $V$ be a $1$-modular  faithful $G$-module where $G$ is quasi-toral. Let $N$ be the shell and let $M$ denote $N\git G$.
\begin{lemma}\label{lem:codim.Npr}
Suppose that $N$ is minimal.
\begin{enumerate}
\item $\codim_N(N\setminus N_\pr)\geq 2$, i.e., $N$ is $2$-principal.
\item For $i=1$, $2$, we have an equality of homotopy groups $\pi_i(N_\pr)=\pi_i(N_\sm)$.
\end{enumerate}
\end{lemma}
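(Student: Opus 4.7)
The plan is to deduce (1) from the symplectic slice theorem together with the evenness of complex symplectic quotient dimensions, and then derive (2) by a standard Lefschetz-type connectivity statement for complex analytic complements.

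For (1), I would pass UTCLS to a stable $V$ (using Proposition~\ref{prop:make.stable}) and bound the codimension of each non-principal stratum $N^{(H)}$ directly. Let $(S = W\oplus W^*, H)$ be the associated symplectic slice representation, with $W = W^H\oplus W_0$ as in Section~\ref{sec:background}. Lemma~\ref{lem:strata}(1) gives $\codim_N N^{(H)} = \codim_{N_0}\NN(N_0)$. Since $W_0$ is a $1$-modular faithful $H$-module by Lemma~\ref{lem:smooth=principal}(3) with $W_0^H = 0$, the shell $N_0$ is a complete intersection of dimension $2\dim W_0 - \dim H$ and its null cone is the fiber of $\pi_{N_0}$ over the origin, so
\[
\codim_N N^{(H)} \;=\; \dim(N_0\git H) \;=\; 2(\dim W_0 - \dim H).
\]
Non-principality forces $W_0\neq 0$: if $\dim H > 0$, applying the $1$-modularity inequality at $r = \dim H$ to $W_0^H = 0$ yields $\dim W_0 \geq \dim H + 1$; if $H$ is finite, the trivial bound $\dim W_0 \geq 1$ already suffices. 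In either case $\codim_N N^{(H)} \geq 2$, and since $N\setminus N_\pr$ is a finite union of such strata, the first claim follows.

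For (2), I would use that $N_\sm$ is a smooth complex manifold (the smooth locus of the algebraic variety $N$) and that $N_\pr\subseteq N_\sm$, which follows because principal isotropy groups are finite (Lemma~\ref{lem:smooth=principal}(1)) while $N_\sing = \{x\in N\mid \dim G_x > 0\}$ by \cite[Proposition~3.2]{HerbigSchwarzSeaton2}. By (1) and the openness of $N_\sm$ in $N$, the closed complex analytic subset $N_\sm\setminus N_\pr = (N\setminus N_\pr)\cap N_\sm$ has complex codimension at least $2$ in $N_\sm$. The standard connectivity result---that removing a closed complex analytic subset of codimension at least $c$ from a complex manifold induces an isomorphism on $\pi_i$ for $i\leq 2c - 2$---then yields $\pi_i(N_\pr) = \pi_i(N_\sm)$ for $i = 1,2$.

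The main delicate point is (1): setting up the uniform codimension identity $\codim_N N^{(H)} = 2(\dim W_0 - \dim H)$ cleanly from the two instances of the slice theorem cited in Lemma~\ref{lem:strata}, and handling the edge case of finite $H$, where $1$-modularity is vacuous but non-principality still forces $W_0\neq 0$. Part (2) is then a routine topological consequence via general position in complex manifolds; the minimality hypothesis is not strictly needed for (1), but it is what makes $N_\sm$ a genuinely large open complex manifold---one containing all of $N$ outside a codimension-$4$ subvariety---which is the framework in which the lemma will be used downstream.
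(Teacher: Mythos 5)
Part (2) of your argument is fine and is essentially the paper's: $N_\sm$ is smooth, $N_\sm\setminus N_\pr$ has complex codimension at least $2$ by (1), and the standard connectivity statement (the paper cites \cite[Lemma 2.4]{HerbigSchwarzSeaton}) gives $\pi_i(N_\pr)=\pi_i(N_\sm)$ for $i=1,2$. Part (1), however, rests on a false identity. You assert that $\codim_N N^{(H)}=\codim_{N_0}\NN(N_0)$ equals $\dim(N_0\git H)=2(\dim W_0-\dim H)$. But $\NN(N_0)$ is the fiber of $\pi_{N_0}$ over $\pi_{N_0}(0)$, and special fibers of a quotient map are in general \emph{larger} than the generic fiber; the only relation that holds in general is $\codim_{N_0}\NN(N_0)\leq\dim(N_0\git H)$, which points the wrong way for the lower bound you need. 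Concretely, let $H=\C^\times$ act on $W_0=\C^2$ with weights $(a,-b)$, $a,b>0$: then $\dim N_0=3$ and $\dim(N_0\git H)=2=2(\dim W_0-\dim H)$, yet $\NN(N_0)$ is the union of the two planes $\{x_1=\xi_2=0\}$ and $\{x_2=\xi_1=0\}$, so $\codim_{N_0}\NN(N_0)=1$. This is precisely the type-O situation, and the corresponding stratum $N^{(H)}$ has codimension $1$ in $N$, so statement (1) genuinely fails without minimality --- contrary to your closing remark that minimality is not needed for (1). Since your argument never invokes minimality, it would "prove" a false statement.

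The correct route (the paper's) is to bound $\dim\NN(N_0)$ directly. Since $G^0$ is a torus, $\NN(W_0\oplus W_0^*)$ is a finite union of subspaces $Z_\lambda=(W_0)_\lambda\oplus(W_0^*)_\lambda$, each of dimension at most $n:=\dim W_0$ (the weights of $\lambda$ on $W_0^*$ are the negatives of those on $W_0$), so $\codim_{N_0}\NN(N_0)\geq(2n-\ell)-n=n-\ell$ with $\ell=\dim H$. Minimality then enters through Proposition \ref{prop:codim} and Corollary \ref{cor:codim}: the absence of slice representations of type O forces $n-\ell\neq 1$, hence $n-\ell\geq 2$. Your $1$-modularity bound $\dim W_0\geq\dim H+1$ only yields $n-\ell\geq 1$, i.e., codimension at least $1$. (Your treatment of the finite-$H$ case is correct: there $\NN(N_0)=\{0\}$ and the codimension is $2\dim W_0\geq 2$.)
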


\begin{proof}
Let $(S,H)$ be a symplectic slice representation of $N$ which is not principal. By Lemma \ref{lem:strata}   we have
$$
\codim_{N}(N^{(H)})=\codim_{N_0}\NN(N_0)
$$
where $\NN(N_0)\subset N_0$ since $G^0$ is a torus.
Now $N_0=W_0\oplus W_0^*$ where $W_0$ is a faithful stable $H$-module. If $H$ is finite, then $\NN(N_0)=\{0\}$ and its codimension in $N_0$ is $2\dim W_0\geq 2$.
If $\ell=\dim H>0$, let $\dim W_0=n$. Then by Proposition \ref{prop:codim} and Corollary \ref{cor:codim}, $n-\ell\geq 2$ and
$$
\codim_{N_0}\NN(N_0)\geq 2n-\ell-n=n-\ell\geq 2.
$$
This gives (1). Now (2) follows from \cite[Lemma 2.4]{HerbigSchwarzSeaton} since $N_\sm$ is smooth and
$$
\dim (N_\sm\setminus N_\pr)\leq\dim(N\setminus N_\pr)\leq \dim N-2=\dim N_\sm-2.
\qedhere
$$
\end{proof}

   For an abelian group $H$,   define   $\rank H$ to be $\dim_\Q (H\otimes_\Z\Q)$.
 Let $N^*:=N\setminus\{0\}$ and let $X:=N^*/\C^\times\subset \P:=\P(V\oplus V^*)$. Let $Z:=(N_\sing\setminus\{0\})/\C^\times\subset X$. Since $\codim_NN_\sing\geq 4$ and $N^*$ and $N_\sing$ are $\C^\times$-stable, $\codim_X Z\geq 4$.  Let $\ell=\dim G$.

 \begin{lemma}
The inclusion $X\smallsetminus Z\to X$ induces   isomorphisms $\pi_1(X\smallsetminus Z)\tosim\pi_1(X)$
and   $\pi_2(X\smallsetminus Z)\tosim\pi_2(X)$.
\end{lemma}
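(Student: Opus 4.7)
The plan is a general-position argument exploiting that $\codim_{\R} Z \ge 8$, twice the complex codimension. Because both maps $S^i \to X$ and homotopies $S^i \times I \to X$ for $i \in \{1,2\}$ have real image-dimension at most $3 < 8$, they should be perturbable off $Z$.

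Concretely, first I would put $(X,Z)$ in a form amenable to such arguments. Since $X$ is a complex projective variety (hence a real semi-algebraic set) and $Z$ is a closed subvariety, the pair $(X,Z)$ admits a semi-algebraic triangulation in which $Z$ is a subcomplex of real dimension at most $\dim_{\R} X - 8$ (by \L{}ojasiewicz's triangulation theorem). Equivalently, one can fix a Whitney stratification of $(X,Z)$.

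For surjectivity of $\pi_i(X\smallsetminus Z) \to \pi_i(X)$ with $i\in\{1,2\}$: given $f\colon S^i \to X$, simplicial approximation replaces $f$ by a simplicial map whose image has real dimension at most $i\leq 2$, and since $2+\dim_{\R}Z < \dim_{\R}X$, a standard codimension count homotopes $f$ off $Z$, producing a representative of the same class in $X\smallsetminus Z$. For injectivity: a homotopy $F\colon S^i\times I \to X$ between two maps $S^i\to X\smallsetminus Z$ has real image-dimension at most $i+1\leq 3$, and again $3+\dim_{\R}Z < \dim_{\R}X$, so $F$ may be homotoped rel.\ $S^i\times\partial I$ to miss $Z$.

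The main obstacle is the rigorous justification of ``general position'' on the singular space $X$. The cleanest formulation proceeds via stratified transversality with respect to a Whitney stratification of $(X,Z)$; alternatively one works directly with the simplicial structure, where avoidance of a real-codimension $\geq 8$ subcomplex by simplicial maps of real dimension $\leq 3$ is a textbook codimension count. Either route uses only the hypothesis $\codim_{X} Z \geq 4$ and makes no further demand on the singularities of $X$.
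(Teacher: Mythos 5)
Your dimension count is fine, but the step ``a standard codimension count homotopes $f$ off $Z$'' is a genuine gap, and it is exactly the hard point of the lemma. General position in this form is a theorem about (PL or smooth) \emph{manifolds}: to push a $k$-dimensional simplicial image off a subcomplex of codimension $>k$ one needs the links of the simplices of $Z$ inside $X$ to be highly connected, which is automatic in a manifold but not in a general complex or Whitney stratified set. Here $Z$ is essentially the singular locus of $X$ (it is the image of $N_\sing\smallsetminus\{0\}$), so you are precisely in the situation where this fails. A concrete counterexample to your final claim that the argument ``makes no further demand on the singularities of $X$'': let $X$ be the open cone on a non-simply-connected homology $3$-sphere $Y$ and $Z$ the cone point. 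Then $Z$ has real codimension $4$ and real dimension $0$, every loop and every disc has image of dimension at most $2<4$, yet $\pi_1(X\smallsetminus Z)\simeq\pi_1(Y)\neq 1=\pi_1(X)$: the only nullhomotopies pass through the cone point, and no triangulation or stratified-transversality device will move them off it. Stratified transversality does not rescue the argument either, since the source of your map is not a submanifold of an ambient manifold being made transverse to $Z$; it is a map into the singular space $X$ itself.

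What makes the lemma true is a hypothesis on the local structure of $X$ that your write-up never uses: $N$ is a complete intersection (this is where $1$-modularity enters), so $X$ is a local complete intersection, and hence has maximal rectified homotopical depth. The paper's proof exploits this by choosing a generic linear section $L\subset\P$ with $\dim(L\cap X)=3$ and $L\cap Z=\emptyset$ (possible since $\codim_XZ\geq 4$), and then applying the Goresky--MacPherson Lefschetz hyperplane theorem twice, to $L\cap X\hookrightarrow X$ and to $L\cap X=L\cap(X\smallsetminus Z)\hookrightarrow X\smallsetminus Z$, obtaining isomorphisms on $\pi_1$ and $\pi_2$ in both cases; the lemma follows by composing. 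Any correct proof must invoke some such input (Goresky--MacPherson, or the Hamm--L\^e connectivity theorems for local complete intersections); a pure codimension count cannot suffice.
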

\begin{proof}
We follow \cite{Starr}. Choose a generic linear section
$L$
of $\mathbb{P}$  which has codimension $\dim  X-3$
and does not intersect $Z$. Then \cite[Theorem 1.2]{GM} (with $\hat{n} =3$) implies that
the maps $\pi_i(L\cap X)\to\pi_i(X)$ and
\[
    \pi_i(L\cap (X\smallsetminus Z)) = \pi_i(L\cap X)\to \pi_i(X\smallsetminus Z).
\]
are isomorphisms for $i=1$, $2$.
\end{proof}

 \begin{theorem}\label{thm:two}
 If $N$ is minimal, then $\ell=\rank \pi_2(M_\sm)-\rank H$ where $H$ is the kernel of a
 surjective homomorphism   $\pi_1(M_\sm)\to\pi_0(G)$.
  \end{theorem}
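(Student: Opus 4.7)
My plan is to deduce the rank identity from the long exact homotopy sequence of $\pi\colon N_\pr\to M_\sm$, after bounding the ranks of $\pi_1(N_\pr)$ and $\pi_2(N_\pr)$ via the preceding lemma and Lefschetz for singular complete intersections. By Lemma \ref{lem:smooth=principal}(4), $M_\sm=M_\pr=\pi(N_\pr)$ and $\pi^{-1}(M_\sm)=N_\pr$, and the symplectic slice theorem makes $\pi$ a locally trivial fiber bundle with fiber $F=G/H_\pr$, where $H_\pr$ is a principal isotropy. Since $V$ is $1$-large (Lemma \ref{lem:smooth=principal}(1)), $H_\pr$ is finite, and because $V$ is faithful and $G^0$ is a torus we have $G^0\cap H_\pr=\{1\}$, so $F^0=G^0$ is a torus of dimension $\ell$; hence $\pi_1(F)=\Z^\ell$ and $\pi_2(F)=0$.

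Turning to $\pi_i(N_\pr)$, Lemma \ref{lem:codim.Npr}(2) reduces this to $\pi_i(N_\sm)$ for $i=1,2$. Since $\ell>0$ the origin lies in $N_\sing$, so $N_\sm\subset N^*$ and $N_\sm/\C^\times=X\setminus Z$, with $X=\P(N)\subset\P^{2n-1}$ a complete intersection of $\ell$ quadrics of dimension $\dim N-1\geq 3$ (using $n>\ell$ from $1$-modularity; the one borderline case $\ell=1$, $n=2$ is excluded because $\codim_N N_\sing=3$ there). The Goresky-MacPherson singular-space Lefschetz hyperplane theorem, as invoked in the preceding lemma, gives $\pi_1(X)=0$ and $\pi_2(X)\cong\Z$, and the preceding lemma transfers these to $X\setminus Z$. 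The long exact sequence of the principal $\C^\times$-bundle $N_\sm\to X\setminus Z$ is then
\[
0\to\pi_2(N_\sm)\to\Z\xrightarrow{d}\Z\to\pi_1(N_\sm)\to 0,
\]
where $d$ is the degree of the restricted hyperplane class, which is nonzero by Lefschetz. Hence $\pi_1(N_\sm)$ and $\pi_2(N_\sm)$ are finite, and $\rank\pi_i(N_\pr)=0$ for $i=1,2$.

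Finally, the long exact sequence of the fibration $\pi$, combined with connectedness of $N_\pr$ inherited from irreducibility of $N$, reads
\[
0\to\pi_2(N_\pr)\to\pi_2(M_\sm)\to\Z^\ell\to\pi_1(N_\pr)\to\pi_1(M_\sm)\to\pi_0(F)\to 0,
\]
and taking alternating $\mathbb{Q}$-dimensions gives $\rank\pi_2(M_\sm)-\ell-\rank\pi_1(M_\sm)=0$. A surjection $\pi_1(M_\sm)\twoheadrightarrow\pi_0(G)$ is produced from the connecting map into $\pi_0(F)=\pi_0(G)/H_\pr$, promoted to $\pi_0(G)$ either by showing $H_\pr=\{1\}$ under minimality or by passing to the Galois cover $N_\pr/G^0\to M_\sm$; since $\pi_0(G)$ is finite, its kernel $H$ satisfies $\rank H=\rank\pi_1(M_\sm)$, which combined with the above yields $\ell=\rank\pi_2(M_\sm)-\rank H$. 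I expect the main obstacle to be this last promotion of the connecting map onto $\pi_0(G)$ itself (as opposed to its quotient $\pi_0(G)/H_\pr$), together with the careful verification of the singular-space Lefschetz input for $X$.
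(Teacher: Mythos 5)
Your proposal is correct and follows essentially the same route as the paper: Lefschetz (Goresky--MacPherson/Fulton--Lazarsfeld) for $X=N^*/\C^\times$, transfer to $X\smallsetminus Z$ using $\codim_X Z\geq 4$, the $\C^\times$-bundle to control $\pi_1,\pi_2$ of $N_\sm=N_\pr$ (up to these homotopy groups), and then rank-counting in the long exact sequence of $N_\pr\to M_\pr=M_\sm$. The only divergence is that you assert the connecting map $d\colon\pi_2(X\smallsetminus Z)\to\pi_1(\C^\times)$ is nonzero (which requires the naturality of the Lefschetz isomorphism $\pi_2(X)\tosim\pi_2(\P)$), whereas the paper sidesteps this by using only that $\pi_1(N^*)$ and $\pi_2(N^*)$ have equal rank, which is all the final alternating-sum computation needs.
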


\begin{proof}
We can replace $M_\sm$ by $M_\pr$ since they are the same
by Lemma \ref{lem:smooth=principal}(4) and Proposition \ref{prop:make.stable}(1).
Using  \cite[Corollary 9.7]{FL} as in the proof of \cite[Theorem 3.22]{HerbigSchwarzSeaton2} we get that $\pi_2(X)=\Z$ and $\pi_1(X)=0$. From the fibration $\C^\times\to N^*\to X$ we get an exact sequence
\[
  \underbrace{\pi_2(\C^\times)}_{=0} \to \pi_2(N^*)\to \underbrace{\pi_2(X)}_{=\Z}\to\underbrace{\pi_1(\C^\times)}_{=\Z}
    \to\pi_1(N^*)\to \underbrace{\pi_1(X)}_{=0},
\]
It follows that $\pi_1(N^*)$ is abelian and that  $\pi_1(N^*)$ and $\pi_2(N^*)$ are either both finite or both of rank $1$.
From the diagram of fibrations
\[
\begin{CD}
\C^\times@>>>N_\sm@>>>X\smallsetminus Z \\
@VVV @VVV @VVV \\
\C^\times @>>>N^*@>>>X
\end{CD}
\]
we see that $\pi_1(N_\sm)\simeq\pi_1(N^*)$ and $\pi_2(N_\sm)\simeq\pi_2(N^*)$. Thus $\pi_i(N_\pr)\simeq\pi_i(N^*)$ for $i=1$, $2$ and $\pi_1(N_\pr)$ and $\pi_2(N_\pr)$ are either both finite or both of rank $1$.

From the fibration $G\to N_\pr\to M_\pr$ we get an exact sequence
\[
  \underbrace{\pi_2(G)}_{=0} \to \pi_2(N_\pr)\to \pi_2(M_\pr) \to\underbrace{\pi_1(G)}_{=\Z^\ell}
    \to\pi_1(N_\pr)\to \pi_1(M_\pr)\to\pi_0(G)
\]
Now we do all calculations
modulo finite groups to only consider the rank of the abelian groups.
Let $H$ denote the image of $\pi_1(N_\pr)\to \pi_1(M_\pr)$ which is the same as the inverse image of the identity of $\pi_0(G)$ under $\pi_1(M_\pr)\to\pi_0(G)$.
We have
$$
\rank \pi_2(N_\pr)-\rank\pi_2(M_\pr)+\ell-\rank \pi_1(N_\pr)+\rank H=0.
$$
Since $\pi_1(N_\pr)$ and $\pi_2(N_\pr)$ have the same rank,
$$
-\rank\pi_2(M_\pr)+\ell+\rank H=0,
$$
establishing the theorem.
\end{proof}

\begin{corollary}\label{cor:rank.G}
Let $G$ be quasi-toral and $V$ a
$1$-modular
 faithful
minimal $G$-module with shell $N$.
Let $G'$, $V'$ and $N'$ have the same properties.  If
$N\git G\simeq N'\git G'$,
then $\dim G=\dim G'$,
 $\dim V=\dim V'$ and $\dim N=\dim N'$.
\end{corollary}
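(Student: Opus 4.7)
The plan is to extract $\dim G$ as a purely topological invariant of $M_\sm := (N\git G)_\sm$ via Theorem \ref{thm:two}, and then to recover $\dim V$ and $\dim N$ from $\dim M$ and $\dim G$ by elementary dimension counting.

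First I would note that any algebraic isomorphism $N\git G \simeq N'\git G'$ restricts to a homeomorphism $M_\sm \simeq M'_\sm$ of smooth loci, so in particular $\pi_i(M_\sm) \simeq \pi_i(M'_\sm)$ for all $i$. Next I would apply Theorem \ref{thm:two}, which gives
$$
\dim G \;=\; \rank \pi_2(M_\sm) \,-\, \rank H,
$$
where $H \subset \pi_1(M_\sm)$ is the kernel of a surjection onto $\pi_0(G)$. The crucial observation is that since $G$ is a quasi-toral algebraic group, $\pi_0(G) = G/G^0$ is finite. Hence $H$ has finite index in $\pi_1(M_\sm)$, so $\rank H = \rank \pi_1(M_\sm)$, and we obtain the purely topological formula
$$
\dim G \;=\; \rank \pi_2(M_\sm) \,-\, \rank \pi_1(M_\sm).
$$
Applying the same formula to $M' := N'\git G'$ and using $M_\sm \simeq M'_\sm$ yields $\dim G = \dim G'$.

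For the remaining dimensions I would invoke Proposition \ref{prop:make.stable}(1) to assume, UTCLS, that $V$ is stable; this changes neither $N$, $G$, nor any of the dimensions in question. For a stable quasi-toral action, principal orbits have dimension $\dim G$ by Lemma \ref{lem:smooth=principal}(1), so $\dim M = \dim N - \dim G$. Combining this with the identity $\dim N = 2\dim V - \dim G$ noted in the introduction gives $\dim V = \tfrac{1}{2}\dim M + \dim G$ and $\dim N = \dim M + \dim G$. Since $\dim M = \dim M'$ and $\dim G = \dim G'$, the conclusions $\dim V = \dim V'$ and $\dim N = \dim N'$ follow immediately.

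The only non-trivial input is the topological formula for $\dim G$; everything else is dimension counting. I do not expect a real obstacle, because the quasi-toral hypothesis automatically makes $\pi_0(G)$ finite, which is exactly what is needed to convert Theorem \ref{thm:two} into a statement about the topology of $M_\sm$ alone.
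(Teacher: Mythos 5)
Your proof is correct and is essentially the argument the paper intends: the corollary is stated as an immediate consequence of Theorem \ref{thm:two}, with the first claim following from the topological invariance of $\rank \pi_2(M_\sm)-\rank H$ (finiteness of $\pi_0(G)$ makes $H$ a finite-index abelian subgroup of $\pi_1(M_\sm)$, so its rank is determined by $\pi_1(M_\sm)$ alone), and the other two claims from the dimension counts $\dim N=2\dim V-\dim G$ and $\dim M=\dim N-\dim G$, valid since $N$ is stable with finite principal isotropy. The only cosmetic caveat is that $\pi_1(M_\sm)$ need not itself be abelian, so ``$\rank \pi_1(M_\sm)$'' should be read as the rank of the finite-index abelian subgroup $H$, which is indeed an invariant of the homeomorphism type of $M_\sm$.
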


Corollaries \ref{cor:codim} and \ref{cor:type.O.G^0.vs.G} give the following.

\begin{corollary}\label{cor:rank G^0}
 Let $G$ be quasi-toral and $V$ a
 $1$-modular
  faithful minimal
 $G$-module with shell $N$.   Then  $N\git G$  is an orbifold if and only if
 $N\git G^0$ is an orbifold.
 \end{corollary}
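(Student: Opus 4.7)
The plan is to reduce the corollary to Corollary \ref{maincor:1}, applied separately to the quasi-toral group $G$ and to its identity component $G^0$ (which is itself quasi-toral, being a torus).

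First I will verify the hypotheses of Corollary \ref{maincor:1} for the $G^0$-action on $V$. Faithfulness of $V$ as a $G^0$-module is immediate from $G^0\subset G$. For $1$-modularity, I note that $\dim G_v=\dim G^0_v$ for every $v\in V$ (since $G^0_v=G^0\cap G_v$ has finite index in $G_v$), so the strata $V_{(r)}$ defined in \S\ref{sec:background} agree whether computed for $G$ or for $G^0$. Moreover, the shell $N$ is literally the same variety for both groups, since the moment mapping $\mu\colon V\oplus V^*\to\lieg^*$ depends only on the shared Lie algebra $\lieg$.

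Next I transfer minimality from the $G$-action to the $G^0$-action, which is where Corollaries \ref{cor:codim} and \ref{cor:type.O.G^0.vs.G} enter. By Corollary \ref{cor:codim} applied to $(V,G)$, the hypothesis that $N$ is minimal is equivalent to $N$ admitting no symplectic slice representation of type O as a $G$-variety. Corollary \ref{cor:type.O.G^0.vs.G} converts this into the same statement for $N$ viewed as a $G^0$-variety, and a second application of Corollary \ref{cor:codim}, now to $(V,G^0)$, then yields that $N$ is minimal as a $G^0$-variety as well.

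Finally, Corollary \ref{maincor:1} applied to $(V,G)$ gives that $N\git G$ is an orbifold if and only if $G$ is finite, while applied to $(V,G^0)$ it gives that $N\git G^0$ is an orbifold if and only if $G^0$ is finite. Since $G^0$ is a connected torus, finiteness of $G^0$ is equivalent to $G^0=\{1\}$, which is in turn equivalent to $G$ being finite; chaining these equivalences yields the corollary. The only delicate step is the two-stage transfer of minimality via the type O characterization, needed precisely because Corollary \ref{maincor:1} is invoked for both groups and so requires both module structures to be minimal; everything else is bookkeeping.
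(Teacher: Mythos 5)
Your proof is correct and takes essentially the same route as the paper: the paper's one-line justification cites Corollaries \ref{cor:codim} and \ref{cor:type.O.G^0.vs.G} precisely to transfer minimality between the $G$- and $G^0$-actions on $N$, after which Corollary \ref{maincor:1} (equivalently Corollary \ref{cor:rank.G}) is applied to both $(V,G)$ and $(V,G^0)$ exactly as you do. (The transfer could be shortcut by observing that $N$, $N_\sing$, and hence the condition $\codim_N N_\sing\geq 4$ do not depend on whether one views $N$ as a $G$- or $G^0$-variety, but your type-O detour is the one the paper itself indicates.)
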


 \section{Quotienting by slice representations of type O}

We establish  Theorem  \ref{mainthm:6} in the case that $G$ is a torus.

Let $T=(\C^\times)^k$ be the diagonal maximal torus of $\GL_k(\C)$ acting  on $W=\C^k$, $k\geq 2$. For $t\in T$ let $t_i(t)$ denote its $i$th component, $1\leq i\leq k$.  Let $x_1,\dots,x_k$ be the usual coordinate functions on $W$ and let $\xi_1,\dots,\xi_k$ be the dual coordinate functions on $W^*$.  Let $D\subset T$ denote the nonzero scalar multiples of the identity matrix. Let $\vec x$ and $\vec \xi$ denote $(x_1,\dots,x_k)$ and $(\xi_1,\dots,\xi_k)$, respectively.  Let $H$ be a torus of dimension $k-1$ and let $\rho\colon H\to T\subset \GL(W)$ be a stable faithful representation. Then $\chi_i:=t_i\circ\rho$ is a character of $H$, $1\leq i\leq k$. Now
$\C[W]^H\simeq\C[f]$  where $f(\vec x)=\prod x_i^{m_i}$ is an invariant monomial and $m_i\in\N$, $i=1,\dots,k$. Since $f$ is not a power of another element in $\C[W]^H$,  the GCD of the $m_i$ is $1$. Set   $m=\sum_{i=1}^k m_i$ and $\vec m=(m_1,\dots,m_k)$.
Let $F:=\rho\inv(D)\subset H$. Then $F\simeq\rho(F)\simeq\Z/m\Z$.

\begin{lemma} \label{lem:H-invariants}
The following hold.
\begin{enumerate}
\item $\rho(H)$ is the subtorus of $T$ defined by $\prod_it_i^{m_i}=1$.
\item The $H$-invariants of $W\oplus W^*$ are generated by $f(\vec x)$, $f(\vec \xi)$ and $x_i\xi_i$, $i=1,\dots,k$.
\end{enumerate}
\end{lemma}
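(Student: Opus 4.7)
The plan is to handle (1) first and then deduce (2) by a direct analysis of invariant monomials, using (1) to characterize when a monomial is $H$-invariant.

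For (1), I would note that $\rho(H) \subset T$ is a subtorus of dimension $k-1$, hence of codimension $1$ in $T$, and every codimension-one subtorus of $T$ is the kernel of a single primitive character of $T$, i.e., is defined by an equation $\prod_i t_i^{a_i} = 1$ for some primitive $(a_1,\dots,a_k)\in\Z^k$. Since $f(\vec x) = \prod_i x_i^{m_i}$ is $H$-invariant, the character $\prod_i t_i^{m_i}$ of $T$ pulls back to the trivial character of $H$ under $\rho$, so $\rho(H)$ is contained in the subtorus $T_{\vec m} := \{t : \prod t_i^{m_i}=1\}$. Because $\gcd(m_1,\dots,m_k)=1$, the vector $\vec m$ is primitive, so $T_{\vec m}$ has dimension $k-1$ and hence equals $\rho(H)$. (One should also note, by Remark \ref{rem:1-modular} together with the stability hypothesis on $\rho$, that the $m_i$ are strictly positive after a UTCLS reduction; this is used below.)

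For (2), I use that since $H$ is a torus, $\C[W\oplus W^*]^H$ is spanned by the invariant monomials $\prod_i x_i^{a_i}\xi_i^{b_i}$ with $a_i,b_i\in\N$. Such a monomial is invariant if and only if $\sum_i (a_i-b_i)\chi_i = 0$ in the character group of $H$. By (1), the kernel of the restriction map $\hat T \simeq \Z^k \to \hat H$ is exactly $\Z\cdot\vec m$, so the invariance condition becomes $(a_1-b_1,\dots,a_k-b_k) = q\vec m$ for some $q\in\Z$. I would then split into three cases according to the sign of $q$: if $q=0$, the monomial equals $\prod_i(x_i\xi_i)^{a_i}$; if $q>0$, then (since all $m_i>0$) $a_i \geq q m_i$ and setting $c_i:=b_i$ writes the monomial as $f(\vec x)^q\prod_i(x_i\xi_i)^{c_i}$; if $q<0$, symmetrically the monomial is $f(\vec \xi)^{-q}\prod_i(x_i\xi_i)^{c_i}$ for appropriate $c_i$. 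This exhibits every invariant monomial as a monomial in the proposed generators, proving (2).

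The only genuinely delicate point is the primitivity of $\vec m$: it is the primitivity, together with the coprimality to the dimension count, that forces equality $\rho(H)=T_{\vec m}$ in step (1). Everything after that is bookkeeping on nonnegative integer decompositions. I do not anticipate any real obstacle once the sign convention (all $m_i>0$ after UTCLS) is in place.
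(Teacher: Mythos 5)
Your proof is correct and follows essentially the same route as the paper's: both parts rest on the fact that the lattice of relations $\sum_i c_i\chi_i=0$ is exactly $\Z\cdot\vec m$ with $\vec m$ primitive and all $m_i>0$ (forced by stability). The paper phrases part (2) contrapositively (a ``mixed'' invariant monomial $\prod_{i\in I}x_i^{a_i}\prod_{j\in J}\xi_j^{b_j}$ with $I,J$ disjoint and nonempty would violate this lattice condition), whereas you give the explicit factorization into the stated generators, but the underlying argument is the same.
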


\begin{proof}
Since the GCD of the $m_i$ is $1$, $\vec m$ is a basis element of $\Z^k$, hence the equation in (1) does indeed define a subtorus of $T$.

Let $\vec c=(c_1,\dots,c_k)\in\Z^k$. Since $\rho(H)\subset T$ is a subtorus of rank $k-1$, the relations of the $t_i$ on $\rho(H)$ form a free $\Z$-module of rank $1$. Thus
\begin{equation}\tag{$*$}
\prod_i\chi_i^{c_i}=1   \text{ on } H \text{ if and only if } \vec c=a\cdot\vec m,  \, a\in\Z.
\end{equation}
If (2) fails, then there is an invariant of the form $\prod_{i\in I}x_i^{a_i}\prod_{j\in J}\xi_j^{b_j}$ where $I$, $J\subset\{1,\dots,k\}$ are disjoint nonempty subsets and the $a_i$ and $b_j$ are strictly positive. It follows that there is a $\vec c$ violating $(*)$.
\end{proof}

\begin{remark}
 Let $\lie t$ (resp.\ $\lie h$) denote the Lie algebra of $T$ (resp.\ $H$). If $A\in\lie t$, then $A(\vec x)=(a_1x_1,\dots,a_kx_k)$ for some $a_i\in\C$. We say that $(a_1,\dots,a_k)$ is \emph{associated to $A$\/}. Let $\rho_*\colon \lie h\to \lie t$ be the homomorphism induced by $\rho$. Then $A\in\rho_*(\lie h)$ if and only if $\sum m_ia_i=0$ where $\vec a$ is associated to $A$.
\end{remark}

Let $N_H$ denote the shell of $W$  and set $M_H:=N_H\git H$.

\begin{corollary}
\begin{enumerate}
\item On $N_H$ we have $x_i\xi_i/m_i=x_j\xi_j/m_j$ for  $1\leq i,j\leq k$.
\item The invariants of $N_H$ are generated by $ f(\vec x)$, $ f(\vec \xi)$ and
$$
h(\vec x,\vec \xi)= \sum_{i=1}^k x_i\xi_i.
$$
\end{enumerate}
\end{corollary}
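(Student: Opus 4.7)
The plan is to write out the moment map equations for the diagonal $H$-action on $W\oplus W^*$ in the given coordinates and combine the resulting linear constraints on the monomials $x_i\xi_i$ with Lemma \ref{lem:H-invariants}. This will yield both (1) and (2) quickly.

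First, I would compute the canonical homogeneous moment map $\mu\colon W\oplus W^*\to\lie h^*$ explicitly. Since $H$ acts on $x_i$ by the character $\chi_i$ and on $\xi_i$ by $\chi_i\inv$, standard computation gives
\[
\langle \mu(\vec x,\vec \xi),A\rangle \;=\; \sum_{i=1}^k d\chi_i(A)\, x_i\xi_i \qquad\text{for all }A\in\lie h.
\]
Hence $N_H$ is cut out by the linear (in $x_i\xi_i$) equations $\sum_{i=1}^k d\chi_i(A)\,x_i\xi_i=0$ as $A$ varies over $\lie h$. Equivalently, writing $\vec y:=(x_1\xi_1,\dots,x_k\xi_k)$, the shell consists of those $(\vec x,\vec\xi)$ for which $\vec y$ is orthogonal to the image of $d\chi=(d\chi_1,\dots,d\chi_k)\colon\lie h\to\C^k$.

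Second, by the Remark following Lemma \ref{lem:H-invariants}, this image coincides with the hyperplane $\{\vec a\in\C^k:\sum_i m_ia_i=0\}$, whose orthogonal complement is the line $\C\cdot\vec m$. Therefore on $N_H$ we must have $\vec y=\lambda\vec m$ for some $\lambda\in\C$; equivalently $x_i\xi_i=\lambda m_i$ for every $i$. This gives $x_i\xi_i/m_i = \lambda = x_j\xi_j/m_j$, proving (1).

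For (2), Lemma \ref{lem:H-invariants}(2) says $\C[W\oplus W^*]^H$ is generated by $f(\vec x)$, $f(\vec\xi)$ and the $x_i\xi_i$. Summing the relations from (1) over $i$ yields $h(\vec x,\vec\xi)=\sum_i x_i\xi_i=\lambda m$, so $\lambda=h/m$ on $N_H$, and consequently
\[
x_i\xi_i \;=\; \frac{m_i}{m}\,h(\vec x,\vec\xi) \quad\text{on } N_H.
\]
Since $H$ is reductive, $\C[N_H]^H$ is the image of $\C[W\oplus W^*]^H$ under restriction; the above reduction replaces every $x_i\xi_i$ by a scalar multiple of $h$, so the image is generated by $f(\vec x)$, $f(\vec\xi)$ and $h$. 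There is no real obstacle here — the argument is a direct computation once Lemma \ref{lem:H-invariants} is in hand; the only point requiring a little care is matching normalizations so that the moment map equations are indeed the $\C$-linear equations in $x_i\xi_i$ indexed by elements of $\lie h$.
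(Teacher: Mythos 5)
Your argument is correct and follows essentially the same route as the paper: the paper derives (1) by evaluating the moment map on the specific elements of $\rho_*(\lie h)$ associated to $\vec a=(0,\dots,-m_j,\dots,m_i,\dots,0)$ (using the Remark's characterization $\sum_i m_ia_i=0$), which is just a pointwise version of your orthogonality argument, and then states that (2) ``follows easily''---exactly the elimination of the $x_i\xi_i$ in favor of $h$ via Lemma \ref{lem:H-invariants}(2) that you carry out explicitly.
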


\begin{proof}
Let  $\vec a\in\Z^k$   where $a_i=-m_j$, $a_j=m_i$ and the remaining entries of $\vec a$ are zero. Then $\vec a$ corresponds to an element of $\rho(\lie h)$.
Hence, on $N_H$, $-m_jx_i\xi_i+m_ix_j\xi_j=0$ and we have (1). Part (2) follows easily.
\end{proof}

  Let $W_D=\C$ with $\rho(F)$ and $D$ acting  by scalar multiplication. Let $x_0$ denote the usual coordinate function on $W_D$ with dual coordinate function $\xi_0$ on $W_D^*$.   We have a  $D$-equivariant embedding $\rho\colon W_D\to W$ sending the generator $1\in W_D$ to $v_0:=(\sqrt{m_1},\dots,\sqrt{m_k})\in W$.
  Let $\rho^*\colon W_D^*\to W^*$ be the dual embedding with image $\C\cdot v_0^*$. Let $N_0=W_D\oplus W_D^*$ and let $\phi\colon N_0\to W\oplus W^*$ denote the $D$-equivariant embedding induced by $\rho$ and $\rho^*$.

\begin{corollary} \label{cor:reduce.N_0}
Let $c:=\prod_i m_i^{-m_i/2}$ and let $f_1=c\cdot f(\vec x)$, $f_2=c\cdot  f(\vec\xi)$ and let $h(\vec x,\vec\xi)$ be as above.
Then the embedding $\phi$ has image in $N_H$ and induces a $D$-equivariant isomorphism $N_0/F\simeq N_H\git H=M_H$. In fact,  $\phi^*f_1=x_0^m$, $\phi^*f_2=\xi_0^m$ and $\phi^*h = m\cdot x_0\xi_0$.
Moreover, $\phi$ is a symplectomorphism up to rescaling the standard symplectic form on $W_D\oplus W_D^*$.
\end{corollary}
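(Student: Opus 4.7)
The plan is to verify everything by direct computation using the explicit form of $\phi$ and then identify the coordinate rings on the two sides. The key observation is that $\phi^* x_i = \sqrt{m_i}\, x_0$ and $\phi^* \xi_i = \sqrt{m_i}\, \xi_0$, so each of the three identities $\phi^*f_1 = x_0^m$, $\phi^*f_2 = \xi_0^m$, $\phi^*h = m\, x_0 \xi_0$ reduces to a single line of arithmetic; the constant $c = \prod_i m_i^{-m_i/2}$ is engineered precisely to absorb the factor $\prod_i m_i^{m_i/2}$ produced by pulling back $f(\vec x)$. To see that $\phi$ lands in $N_H$, note that by the preceding remark, the entries of the $H$-moment map take the form $\sum_i a_i x_i \xi_i$ with $\sum_i m_i a_i = 0$; each such entry pulls back along $\phi$ to $\bigl(\sum_i m_i a_i\bigr) x_0 \xi_0 = 0$. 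The map $\phi$ is $D$-equivariant because the scalar actions on $W_D\to W$ and $W_D^*\to W^*$ intertwine, and, viewing $F$ simultaneously inside $D$ and (via $\rho$) inside $H$, it is also $F$-equivariant, so $\phi$ descends to an algebraic morphism $\bar\phi\colon N_0/F \to M_H$.

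The heart of the argument is to identify both $N_0/F$ and $M_H$ with the affine surface $XY = Z^m$ in $\mathbb A^3$. On the left, $F\simeq\mu_m$ acts on $x_0$ and $\xi_0$ by dual $m$-th roots of unity, so $\C[N_0]^F$ is generated by $X:=x_0^m$, $Y:=\xi_0^m$, $Z:=x_0\xi_0$ with the single relation $XY=Z^m$. On the right, the previous corollary supplies the generators $f_1, f_2, h$ of $\C[M_H]$; using the identity $x_i\xi_i/m_i = x_j\xi_j/m_j$ on $N_H$ to introduce $t := x_1\xi_1/m_1$, one finds $h = m t$ and $f_1 f_2 = c^2 \prod_i (m_i t)^{m_i} = t^m$, giving the presentation $\C[M_H] \simeq \C[f_1, f_2, h]/(m^m f_1 f_2 - h^m)$. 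The map sending $f_1 \mapsto X$, $f_2 \mapsto Y$, $h \mapsto mZ$ matches the two presentations, identifying $\bar\phi^*$ as an isomorphism; $D$-equivariance of $\bar\phi$ is automatic from the $D$-equivariance of $\phi$.

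The symplectic assertion is a one-line computation: $\phi^*\bigl(\sum_i dx_i\wedge d\xi_i\bigr) = \bigl(\sum_i m_i\bigr)\, dx_0 \wedge d\xi_0 = m\, dx_0 \wedge d\xi_0$, so $\phi$ pulls back the standard symplectic form on $W\oplus W^*$ to $m$ times the standard form on $W_D\oplus W_D^*$. I do not foresee a real obstacle; every step is direct, and the only point requiring care is the bookkeeping of the scalar $c$ together with the identity $m = \sum_i m_i$, both of which are arranged in the setup so that the computation comes out clean.
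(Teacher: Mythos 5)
Your proposal is correct and follows essentially the same route as the paper: the same pullback computations $\phi^*x_i=\sqrt{m_i}\,x_0$, $\phi^*\xi_i=\sqrt{m_i}\,\xi_0$ giving $\phi^*f_1=x_0^m$, $\phi^*f_2=\xi_0^m$, $\phi^*h=m\,x_0\xi_0$, the same moment-map argument ($\sum_i m_i a_i=0$) for $\phi(N_0)\subset N_H$, and the same one-line check for the symplectic form. The only difference is that you make explicit what the paper leaves implicit, namely that the isomorphism $N_0/F\simeq M_H$ follows because $\phi^*$ matches the generators and the single relation $m^m f_1f_2=h^m$ versus $XY=Z^m$ on the two sides; this is a correct and useful elaboration, not a different argument.
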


\begin{proof}
Let $A\in\lie h$ correspond to $\vec a\in\C^k$. Then for $a$, $b\in \C$ we have
$$
<a\cdot v_0^*,A(b\cdot v_0)>={ab} \sum_{i=1}^km_ia_i =0.
$$
Thus $\phi(N_0)\subset N$.  Now
$$
 \phi^*f_1=c\cdot \prod_{i=1}^k\left(\sqrt{m_i} x_0\right)^{m_i}=c\cdot \prod_{i=1}^km_i^{m_i/2}x_0^m= x_0^m
$$
and similarly, $\phi^*f_2=\xi_0^m$. Finally,
$$
\phi^*h(\vec x,\vec\xi)=(m_1+\dots+m_k) \cdot x_0\xi_0=m\cdot x_0\xi_0.
$$
A simple computation shows that $\phi$ is a symplectomorphism up to rescaling.
 \end{proof}

\begin{example}
\label{ex:-ab}
Suppose that $H=\C^\times$ acts on $W=\C^2$ with weight matrix $(-a,b)$ where $a,b > 0$ and $\gcd(a,b) = 1$. Then $W$ is a stable faithful $G$-module. We have $\rho(s) = (s^{-a}, s^b)$ for $s\in H$, $\vec{m} = (b,a)$ and $F$ consists of the $m= (a+b)$th roots of unity in $H$. Then $M_H$ is isomorphic to the orbifold $N_0/F$.
\end{example}

The following is a generalization of \cite[Theorem 1]{HerbigLawlerSeaton20} and gives Theorem \ref{mainthm:6} in the case that $G$ is a torus.

\begin{theorem}\label{thm:main2}
Let $G$ be a torus with $\dim G = \ell >0$, and let $V$ be a
 faithful $1$-modular $G$-module with shell $N$. There is a subgroup $G'\subset G$ and a
faithful  $1$-modular $G'$-submodule $V'\subset V$
with shell $N'$,  satisfying the following.
\begin{enumerate}
\item There is a $G'$-equivariant injection  $N'\to N$ inducing
a Poisson
isomorphism  $N'\git G'\tosim N\git G$.
\item $N'$ is  minimal.
\item $G'$ is the subgroup of $G$ which stabilizes $V'$.
\item $(V',G')$  is a direct sum of modules $(V'',G'')$ with shells $N''$ of two kinds.
\begin{enumerate}
\item $G''$ is a finite cyclic subgroup of $\C^\times$ acting by scalar multiplication on $V'':=\C$, or
\item $G''$ is a torus,   $V''$ is a
 faithful  $1$-modular $G''$-module and   $N''$ is minimal.
\end{enumerate}
\end{enumerate}
At most one summand of the second type occurs. Moreover, if $V$ is a stable $G$-module, then $V'$ is a stable $G'$-module.
\end{theorem}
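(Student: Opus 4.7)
The plan is to prove the theorem by induction on $\dim G$, iteratively reducing $(V, G)$ via Corollary~\ref{cor:reduce.N_0} until the shell becomes minimal. In the base case, if $N$ is already minimal, take $V' = V$ and $G' = G$, giving a single type (b) summand and vacuous conclusions otherwise.

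For the inductive step, suppose $N$ is not minimal. By Corollary~\ref{cor:codim}, $N$ admits a symplectic slice representation $(S, H)$ of type O. By Theorem~\ref{thm:i.groups} applied UTCLS, I may assume $V$ is stable and $H$ is the principal isotropy of the stable submodule $V^H$. Writing $V = V^H \oplus W$ with $W$ a $G$-complement, Lemma~\ref{lem:H-type-O} yields $\dim W = \dim H + 1$, so $(W, H)$ is a stable, faithful, $1$-modular $H$-module. Corollary~\ref{cor:reduce.N_0} then produces a line $W_D \subset W$, a finite cyclic subgroup $F \subset H$ of order $m$ acting on $W_D$ by scalars, and a $D$-equivariant embedding $\phi\colon W_D \oplus W_D^* \hookrightarrow W \oplus W^*$ landing in $N_W$ and inducing a Poisson isomorphism $(W_D \oplus W_D^*)/F \tosim N_W \git H$ with the explicit generator matching $\phi^* f_1 = x_0^m$, $\phi^* f_2 = \xi_0^m$, $\phi^* h = m \cdot x_0 \xi_0$. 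I then set $V_1 := V^H \oplus W_D \subset V$ and take $G_1 \subset G$ to be the stabilizer of $W_D$ (equivalently of $V_1$). The main checks are: (i) $V_1$ is faithful, stable, and $1$-modular as a $G_1$-module with shell $N_1$; (ii) the inclusion $V_1 \oplus V_1^* \hookrightarrow V \oplus V^*$ restricts to a $G_1$-equivariant embedding of shells $N_1 \hookrightarrow N$; (iii) the induced morphism $N_1 \git G_1 \to N \git G$ is a Poisson isomorphism.

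Items (i) and (ii) follow from the corresponding properties of $V^H$ as a $G/H$-module, of $W_D$ as an $F$-module, and from compatibility of the moment maps. Item (iii) is the main obstacle and is to be established at the level of invariant rings: Lemma~\ref{lem:H-invariants} describes the $H$-invariants of $W \oplus W^*$, and combined with the relations $x_i \xi_i = (m_i/m) h$ on $N_W$ together with the generator correspondence under $\phi^*$, every $G$-invariant on $N$ can be expressed as a $G_1$-invariant on $N_1$, and conversely; Poisson compatibility follows since $\phi$ is a symplectomorphism up to rescaling. Given (iii), I iterate the reduction on $(V_1, G_1)$; since the single stabilizer equation cuts dimension by $\dim H \geq 1$, we have $\dim G_1^0 = \dim G^0 - \dim H < \dim G^0$, so induction terminates. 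Every closed subgroup of a torus has the form $T' \times F'$ with $T'$ a subtorus and $F'$ a finite abelian group, so the final $G'$ splits naturally as such a product; decomposing $V'$ into the simultaneous eigenspaces of $F'$ (and noting that the lines extracted by successive reductions are manifestly $G'$-invariant one-dimensional pieces) yields the direct sum structure of (4), with $T'$ acting on the ``main'' piece (type (b), minimal by the stopping criterion, if nontrivial) and each cyclic factor of $F'$ acting by scalars on a corresponding line (type (a)). At most one type (b) summand arises because the reduction process only acts on the current main torus piece. Item (3) is immediate from the iterative construction of $G'$ as stabilizer, and stability is preserved throughout by Remark~\ref{rem:H-fixed-points} (for the $V^H$ part) together with the trivial stability of $(\C, F)$.
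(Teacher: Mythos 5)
Your overall strategy coincides with the paper's: find a type O slice $(S,H)$, split $V=V^H\oplus W$, replace $W$ by the line $W_D$ of Corollary \ref{cor:reduce.N_0}, take $G_1$ to be the stabilizer of $W_D$ in $G$, and iterate. Indeed one can check that your $G_1=\mathrm{Stab}_G(W_D)=\rho\inv(D)$ and $V_1=W_D\oplus V^H$ agree with the paper's choices in both of its cases. However, there is a genuine gap at exactly the point you flag as ``the main obstacle.'' The paper's proof hinges on the dichotomy $\rho(G)=\rho(H^0)$ versus $\rho(G)=T$, where $\rho\co G\to T\subset\GL(W)$ and $K=\Ker\rho$. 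In the first case $G\simeq H\times K$, the shell and quotient split as products, and Corollary \ref{cor:reduce.N_0} applies factor-by-factor; a finite cyclic summand $(W_D,F)$ genuinely splits off. In the second case $G$ is one dimension larger than $HK$: there is a subtorus $R$ with $\rho(R)=D$ acting by a nontrivial character on $W_D$ \emph{and} nontrivially on $V^H$, the shell is $(N_H\times N_K)\cap Z$ with $Z$ the extra quadric coming from $\lie r$, and the line $W_D$ stays coupled to $V^H$ inside a single connected torus module. Your uniform invariant-theoretic sketch (``every $G$-invariant on $N$ can be expressed as a $G_1$-invariant on $N_1$, and conversely'') does not engage with this coupling; making it precise essentially forces the case analysis, since the $D$-equivariance of the embedding $\phi$ in Corollary \ref{cor:reduce.N_0} is what lets one handle the residual $R$-invariance and the extra equation $f_A=0$. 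Relatedly, faithfulness of $G_1$ on $V_1$ is not automatic and requires the argument the paper gives (that $R\to\GL(W_D)$ has kernel exactly $F_K=R\cap K$), and you do not address the possibility that $H$ is disconnected, in which case Corollary \ref{cor:reduce.N_0} applies only to $H^0$ and the cyclic group has order $m[H:H^0]$ rather than $m$.

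The second gap is in conclusion (4). A type (a) summand requires the finite cyclic group to act by scalars on its line \emph{and trivially on every other summand}, i.e., a genuine product decomposition of $(V',G')$. Your a posteriori decomposition $G'=T'\times F'$ followed by an $F'$-eigenspace decomposition of $V'$ does not yield this: an element of $F'$ could a priori act nontrivially on several eigenlines, and the splitting of $F'$ into cyclic factors each supported on a single line is exactly what the Case 1 product structure $G\simeq H\times K$ provides by construction. Likewise, ``at most one summand of type (b)'' follows from the observation that a Case 2 reduction absorbs $W_D$ into the continuing connected torus module rather than creating a new summand — again invisible without the dichotomy. Finally, your induction is on $\dim G$ with the theorem stated for tori, but after a Case 1 step $G_1=F\times K$ is disconnected; you need to explicitly peel off $F$ and continue with the torus $K$ acting on $V^H$ (which is what the paper does), rather than reapplying the inductive hypothesis to $(V_1,G_1)$ directly.
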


\begin{proof}
We give a recipe for constructing the $(V'',G'')$ as in
(4)
satisfying
(a) and (b).  We know that, UTCLS, $V$ is stable, so we may assume that in the following.

Let  $(S,H)$ be a symplectic slice representation of $N$ of type O.  We may decompose $V$ as    $V^H\oplus W$  where $V^H$ and $W$ are  $G$-submodules of $V$. Clearly $W$ is a stable faithful $H$-module and $\dim W\git H=1$.
 We use some of the notation  and  results above. Let $\rho\colon G\to  T\subset\GL(W)$ give the $G$-module structure on $W$ and let $K=\Ker\rho$. Then $V^H$ is a faithful stable $K$-module.   Let $N_K$ denote the shell corresponding to the $K$-action on $V^H$.
Let $\chi_i=t_i\circ \rho$, $i=1,\dots,k$, considered as characters of $H$.
There is an  $\vec m\in\N^k$, where $m_1,\dots,m_k$ have GCD $1$, such that
$$
\rho(H^0)= \{(t_1,\dots,t_k)\in T \mid \prod_it_i^{m_i}=1\}.
$$
Let $m=\sum_i m_i$ and let
$N_0=W_D\oplus W_D^*$ where $W_D$ is as described before Corollary   \ref{cor:reduce.N_0}.
Since $G$ is connected, $\rho(G)=\rho(H^0)$ or $\rho(G)=T$.
\smallskip

Case 1. Suppose that $\rho(G)=\rho(H^0)$. Then $H=H^0$, $K\simeq G/H$ and $G\simeq H\times K$.   Thus $N= N_H\times N_K$ and $N\git G\simeq N_H\git H\times N_K\git K$.
Recall that $D$ denotes the scalar multiples of the identity in $T$ and
let $F=\rho\inv(D \cap \rho(H))$. Then $F\simeq\rho(F)\simeq  \Z/m\Z$. The inclusion $N_0 \to N_H $   induces
a Poisson isomorphism.
$$
(N_0\times N_K)\git (F\times K)\tosim N_H\git H\times N_K\git K\simeq N\git G.
$$
We are in case (4a) above with $G''=F$ and $V''=W_D$. Note that the subgroup of $H$ stabilizing $W_D\subset W$ is $G''$. We continue  by examining the   action of $K$ on   $N_K=N_{V^H}$ where $\dim K<\dim G$ and $V^H$ is a stable faithful $K$-module.

\smallskip

Case 2. Suppose that $\rho(G)=T$.   Let $R$ denote a copy of $\C^\times$ in $G$ such that $\rho(R)= D$. We have finite groups $F_H= R\cap H$ and  $F_K=R\cap K$.
Since $H\cap K=\{e\}$,
$$
F_H\simeq \rho(F_H)\simeq \rho(H)\cap D\simeq\Z/m'\Z\text { where }m'=m[H:H^0].
$$
Let  $n=\dim V$ and
let $x_{k+1},\dots,x_n$ be coordinate functions on $V^H$ which transform by characters of $G$. Let $\xi_{k+1},\dots,\xi_n$ be the dual variables.    Let $\vec a\in\Z^n$ denote the weights of the $R$-action on $W\oplus V^H$. Then $\vec a$ is associated to a generator $A$ of $\lie r$ with associated quadratic function $f_A$ and zero set $Z$. We have
$$
 f_A(\vec x,\vec \xi)=\sum_{i=1}^na_i x_i\xi_i\text{ and }N=(N_H\times N_K)\cap Z.
$$
Now $G$ contains the subgroup $H\times K$ and $R\cap(H\times K)=F_H\times F_K$.
Since $RH^0K^0$ is a subtorus of $G$ of dimension $\ell$,  $RH^0K^0=RHK=G$ so that $RHK$ is connected, $H=F_HH^0$ and $K=F_KK^0$.
Thus  $RH$ and $RK$ are connected. Let $G''=RK\subset G$ and let $V''=W_D\oplus V^H$. Then $G''$ is a subtorus of $G$ and stabilizes $V''$.
Since $R$ acts as scalars on $W$ and $W_D$, the fact that $G''$ acts faithfully on $W\oplus V^H$ implies that $G''$ acts faithfully on $V''$,
and the fact that $W\oplus V^H$ is a stable $G''$-module implies that $V''$ is a stable $G''$-module as well.
 Let $g\in G$ and suppose that $g$ stabilizes $V''$. Then $g$ has to act on $w_0\in W_D\subset W$ via scalars. Thus $g\in RK=G''$. If $g$ acts as the identity, then since $K$ acts faithfully on $V^H$ and $R\to\GL(W_D)$ has kernel $F_K$, $g$ is the identity of $G''\simeq (R\times K)/F_K$. Thus $G''$ acts faithfully on $V''$.

Let  $N''$ denote the shell of $V''$ for the $G''$-action.
Then $N''=(N_0\times N_K)\cap Z$. The inclusion $N_0 \to N_H$  induces a $G''$-equivariant inclusion $N''\to N$ inducing
a Poisson  isomorphism
$$
N''\git G''\simeq N\git G.
$$
(We no longer need to divide by
the subgroup $F$ from Case 1 since it is a subgroup of $R$.)\
We have $\dim G''<\dim G$ and (4b)
holds except that $N''$ may not be minimal. If not, then there is a symplectic slice of type O and we can continue.
\end{proof}

We illustrate the application of Theorem \ref{thm:main2} and the recipe indicated in its proof with the following examples. Given a weight matrix
$A\in\Z^{\ell\times n}$, one may first use Corollaries \ref{cor:maximal} and \ref{cor:matrixA} to identify slices of type O. The first simple example
illustrates the process in Case 1 of the proof of Theorem \ref{thm:main2}. Throughout, we denote by $e_i$ the elements of the standard basis of $\C^n$.

\begin{example}
Suppose that $G=(\C^\times)^2$ acts on $V=\C^4$ with weight matrix $\begin{pmatrix} -1&1&0&0\\0&0&-1&1\end{pmatrix}$. Then $V$ is a stable faithful
$G$-module and $N\git G$ is obviously a product. We have two symplectic slice representations of type O where $W$ is the span of $e_1, e_2$ and $e_3, e_4$, respectively, and in both cases $\rho(G) = \rho(H^0)$.
Applying Example \ref{ex:-ab} with $a=b=1$ to each yields an isomorphism
\[
    (\C^2 \times \C^2)\git (\{\pm 1\}\times \{\pm 1\})\tosim N\git G.
\]
\end{example}

The next example illustrates Case 2 of the proof of Theorem \ref{thm:main2}.

\begin{example}
Suppose that $G=(\C^\times)^2$ acts on   $V=\C^4$ with weight matrix $\begin{pmatrix} -1&0&2&2\\0&-2&5&5\end{pmatrix}.$  Then $V$ is a faithful stable $G$-module.
We have a symplectic slice representation of type O where $W$ is the span of $e_1,e_2$,
$$
H=\{(s_1,s_2)\in G\mid s_1^2s_2^5=1\}
$$
and $V^H$ is the span of $e_3,e_4$.
The map $\rho$ is given by $\rho(s_1,s_2) = (s_1^{-1}, s_2^{-2})$ so that
$\rho(H)=\{(t_1,t_2)\in T\mid t_1^4t_2^5=1\}$
and $K = \{(1,\pm 1)\}$. Thus $\vec m=(4,5)$, $m = 9$ and $\rho(G) = T$ so that we are in Case 2 of the proof of Theorem \ref{thm:main2}.
We may take $R = \{(t^2,t)\mid t\in\C^\times\}$ and then $F_H = \{(\zeta^2,\zeta)\mid\zeta^9 = 1\}\simeq\Z/9\Z$ and
$F_K = K\simeq\Z/2\Z$.
Now
$R$ acts on $V^H$ with weight $9$ and on $W$ by weight $-2$. Replacing $W$ with $W_D\simeq\C$, we still have an action of $R$ with weight $-2$.
The $F_H$-action on $V'=W_D\oplus V^H$ is as a subgroup of $R$.  Thus the theorem allows us to replace $(V,G)$ by
$(V',G')$  where $V' \simeq \C^3$ and
$G'=R\simeq \C^\times$ acts with weight matrix $(-2, 9, 9)$.
Note that
$V'$ is a minimal $G'$-module.
\end{example}

The next example is similar to the one above, except that   $H$ is not connected
and   the results of \cite{HerbigLawlerSeaton20} do not apply.

\begin{example}
Let $G=(\C^\times)^2$ act on $V=\C^4$ with weight matrix $\begin{pmatrix} 3&0&-4&6\\1&-3&0&0\end{pmatrix}.$ We  have a symplectic slice representation of type O with $H = \{ (\pm 1,s)\mid s\in\C^\times\}$ where $W$ is the span of $e_1$, $e_2$  and $ V^H$ is the span of $e_3$, $e_4$. Then
$$
\rho(H^0)=\{(t_1,t_2)\in T\mid t_1^3t_2=1\}
$$
 so that $\vec m=(3,1)$ and $m=4$. We may take $G'=R = \{(t^4, t^{-3}) \mid t\in\C^\times\}$ which acts on $W$ and $W_D$ with weight $9$ and on $V^H$ with weights $-16$ and $24$. Let  $V'=W_D\oplus V^H$.  Then $V'\simeq\C^3$ is a minimal $G'$-module   and $N'\git G'\simeq N\git G$.
\end{example}

\section{Lifting Isomorphisms}\label{sec:lifting}

Let $X$ be an affine $G$-variety where $G$ is reductive. Let $\sigma$ be an (algebraic) automorphism of $G$ and let    $\phi\colon X\git G\to X\git G$ and  $\Phi\colon X\to X$ be   morphisms. We say that $\Phi$ is a \emph{lift of $\phi$\/} if $\pi\circ\Phi=\phi\circ\pi$ and we say that $\Phi$ is \emph{$\sigma$-equivariant\/} if $\Phi\circ g=\sigma(g)\circ \Phi$ for all  $g\in G$.    In this section we take as the  strata of $X$  the \emph{irreducible components\/} of the   $(X\git G)_{(H)}$ for $H$ reductive. We say that $\phi$ is \emph{strata preserving\/} if it permutes the strata of $X\git G$.

Let $\A(X)$ (resp.\ $\A(X\git G)$) denote the algebraic derivations of $\C[X]$ (resp.\ $\C[X]^G$).
  Since any element of $\A(X)^G$ acts as a derivation of $\C[X]^G$, we have a natural morphism $\pi_*\colon \A(X)^G\to \A(X\git G)$. If   $B\in\A(X)^G$ and $\pi_*B=A$, we say that \emph{$B$ is a lift of $A$}. For $x\in X$ let $T_xX$ denote the Zariski tangent space at $x$. If $A\in\A(X)$, then we  naturally have a vector $A(x)\in T_xX$ so one can think of elements of $\A(X)$ as vector fields on $X$.

  The following  will be useful \cite[Corollary 6.6]{GWSlifting}.

\begin{lemma}\label{lem:same.dim.lifting}
Let $G$ be a reductive complex group and $X$ a normal affine $G$-variety. Suppose that $X$ is $2$-principal and that all $G$-orbits have the same dimension (in which case all $G$-orbits are closed).
Then $\pi_*\A(X)^G\to\A(X\git G)$ is surjective.
\end{lemma}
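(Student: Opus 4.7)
The plan is to reduce the surjectivity to the principal locus, where Luna's slice theorem provides a tractable local model for $\pi$, and then to invoke that both source and target are finitely generated $\C[Y]$-modules so surjectivity may be checked \'etale-locally.

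First, since $X$ is normal and the $2$-principal hypothesis gives $\codim_X(X\setminus X_\pr)\geq 2$, every derivation of $\C[X_\pr]$ extends uniquely to a derivation of $\C[X]$, and the extension is $G$-invariant when the original is. Because all $G$-orbits have the same dimension, $\pi$ is equidimensional and $X_\pr=\pi\inv(Y_\pr)$, so $Y\setminus Y_\pr$ has codimension at least two in the normal variety $Y=X\git G$ and derivations on $Y_\pr$ extend uniquely to $Y$. It therefore suffices to prove surjectivity of the restriction $\A(X_\pr)^G\to\A(Y_\pr)$.

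Second, since all orbits are closed, Luna's slice theorem furnishes, around each closed orbit $Gx\subset X_\pr$ with $H=G_x$ the principal isotropy, a $G$-saturated \'etale neighborhood under which $X_\pr$ is modeled by $G\times^H S^H=G/H\times S^H$ and $Y_\pr$ by $S^H$, where $S$ is the slice representation and $H$ acts trivially on $S^H$. On this product model the map $\A(G/H\times S^H)^G\to\A(S^H)$ is split surjective: any derivation on $S^H$ extends trivially along the $G/H$-factor to a $G$-invariant derivation. Because $\A(X)^G$ is finitely generated over $\C[Y]$ (a standard consequence of the Hilbert--Nagata theorem for $G$-equivariant coherent sheaves on an affine $G$-variety), and because forming derivations and taking reductive-group invariants both commute with \'etale base change on $Y$, surjectivity on each local model yields surjectivity of $\A(X_\pr)^G\to\A(Y_\pr)$ globally.

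The main obstacle I expect is the local-to-global step: verifying carefully that $\A(-)^G$ really does commute with \'etale base change on the quotient (which requires tracking the compatibility of Kähler differentials with \'etale morphisms and of reductive-group invariants with flat base change), and that finite generation of $\A(X_\pr)^G$ over $\C[Y_\pr]$ suffices to reduce surjectivity to the local picture. Once this local-to-global principle is in place, the construction of the local lift via the product structure $G/H\times S^H$ is immediate, and the extension from $X_\pr$ back to $X$ is automatic from normality and the $2$-principal hypothesis.
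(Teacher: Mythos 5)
The paper does not actually prove this lemma; it quotes it as \cite[Corollary 6.6]{GWSlifting}, so your argument has to be judged against what a correct proof must contain. Your first two steps are fine: the reduction to $X_\pr\to Y_\pr$ via normality and codimension $\geq 2$, and the identification of the \'etale-local model $G/H\times S$ over the principal stratum (though you should justify that $H$ acts trivially on the Luna slice there). The gap is the local-to-global step. What your \'etale-local computation gives is surjectivity of the map of coherent sheaves $(\pi_*\mathcal{T}_X)^G\to\mathcal{T}_Y$ \emph{over $Y_\pr$}; since $Y_\pr$ is only quasi-affine, this does not imply surjectivity on global sections. Concretely, local lifts over a cover $\{U_\alpha\}$ of $Y_\pr$ differ by sections of the kernel sheaf $\mathcal{K}$ of invariant vector fields tangent to the orbits, and the obstruction to gluing them is a \v{C}ech class in $H^1(Y_\pr,\mathcal{K})$. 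With $\codim_Y(Y\setminus Y_\pr)$ only $\geq 2$ this group has no reason to vanish (already $H^1(\C^2\setminus\{0\},\O)\neq 0$), and in the quasi-toral applications $\mathcal{K}$ is free of positive rank over $Y_\pr$, so local lifts are genuinely non-unique and the class is genuinely present. This is exactly the difficulty the paper confronts in its own Lemma \ref{lem:pi_*.surjective}, where killing the analogous class requires Cohen--Macaulayness together with complement of codimension $\geq 4$; you have no such hypothesis here, so "finite generation plus \'etale-local surjectivity over $Y_\pr$" cannot close the argument.

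The missing idea is to prove surjectivity of the coherent sheaf map over \emph{all} of the affine variety $Y$, non-principal points included, and then conclude from $H^1(Y,\ker)=0$. This is where the hypothesis that all orbits have the same dimension does its real work (you use it only to make $\pi$ equidimensional): at an arbitrary closed orbit $Gx$ with $H=G_x$, equality of orbit dimensions forces the identity component $H^0$ to act trivially on the slice $S$, so the local model of $\pi$ is $(G/H^0)\times^{\Gamma}S\to S/\Gamma$ with $\Gamma=H/H^0$ finite; over the principal locus of the $\Gamma$-action (whose complement has codimension $\geq 2$ by $2$-principality) the relevant map is \'etale, so vector fields lift \emph{uniquely} there and then extend over all of the normal variety $S$. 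That yields stalkwise surjectivity everywhere on $Y$, and affineness of $Y$ finishes the proof. Without an argument at the non-principal points, or some other device to handle $H^1$ over $Y_\pr$, your proof does not go through.
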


 Let $G$ be quasi-toral and $V$ a faithful $1$-modular
 minimal
 $G$-module with shell $N$.
 We want to apply some results of \cite{SchVectorFields} and \cite[\S 5]{GWSQuotients}  to the $G$-action on $N$. The results there are stated for  $2$-principal $G$-modules or quotients of such $G$-modules by semisimple groups so the arguments have to be modified.

Let $\pi\colon N\to M$ be the quotient mapping.
Let $M_0$ denote the complement of the strata of $M$ whose corresponding isotropy groups are
 infinite
 and let $N_0=\pi\inv(M_0)$.

\begin{proposition}\label{prop:good}
Let $G$, etc.\ be as above. Then the following hold.
\begin{enumerate}
\item $N$ is stable and  $2$-principal with  rational singularities.
\item $N_0$ is smooth.
\item $\codim_M(M\setminus M_0)\geq 4$.
\item The strata of $M$ are smooth.
\item $N$ is factorial, i.e., $\C[N]$ is a UFD.
\item $\C[N]^*=\C^\times$.
\end{enumerate}
\end{proposition}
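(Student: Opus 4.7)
The plan is to assemble items (1), (4), (6) directly from prior results, to prove (2) and (3) via the $N_\sing$ identification, and to treat (5) through a $\C^\times$-quotient argument.

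For (1), I would invoke Proposition \ref{prop:make.stable} to assume UTCLS that $V$ is stable; then Lemma \ref{lem:smooth=principal} gives that $V$ is $1$-large and $N$ has rational singularities (hence is normal), and Lemma \ref{lem:codim.Npr}(1) (which uses minimality) gives that $N$ is $2$-principal. Item (4) is precisely the symplectic slice theorem quoted in \S\ref{sec:background}. For (6), the moment map $\mu$ is homogeneous quadratic, so $N$ is a $\C^\times$-invariant cone. Since $N$ is normal and the scaling $\C^\times$-action contracts $N$ to $0$, $N$ is connected and hence irreducible, so $\C[N]$ is an $\N$-graded integral domain with $\C[N]_0=\C$; the units of such a ring are exactly $\C^\times$.

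For (2) and (3) I use the fact, recorded in the proof of Proposition \ref{prop:codim}, that $N_\sing=\{z\in N:\dim G_z>0\}$. For (2), if $y\in N_0$ then the unique closed orbit $Gx\subset\overline{Gy}$ has finite isotropy by definition of $N_0$, and since $\dim G_y\leq\dim G_x=0$ we conclude $y\in N_\sm$. For (3), pick a stratum $M_{(H)}$ with $\dim H>0$ and let $(S,H)$ be the associated symplectic slice with Lagrangian $W=W^H\oplus W_0$ and shell $N_0$ of $W_0$. Combining Lemma \ref{lem:codim}(1) with Lemma \ref{lem:strata}(1) and the identity $\codim_{N_0}\NN(N_0)=\dim N_0\git H$ for the torus action of $H$ on $N_0$ yields
$$
\codim_M M_{(H)}=\dim N_0\git H=\codim_N N^{(H)}.
$$
Since $\dim H>0$ forces $N^{(H)}\subset N_\sing$, minimality gives $\codim_M M_{(H)}\geq 4$; as there are only finitely many strata with infinite isotropy, this proves (3).

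The delicate item is (5): factoriality of $N$, equivalently $\Cl(N)=0$. If $G$ is finite then $N=V\oplus V^*$ is affine space, so I may assume $\dim G\geq 1$, whence $0\in N_\sing$ and $\dim N\geq 4$. Normality together with $\codim_N N_\sing\geq 4$ gives $\Cl(N)=\Cl(N^*)$, where $N^*=N\setminus\{0\}$. The scaling $\C^\times$-action on $N^*$ is free, so $p\colon N^*\to X:=N^*/\C^\times$ is a principal $\C^\times$-bundle, and $X\subset\P(V\oplus V^*)$ is a complete intersection of quadrics of dimension $\geq 3$. Pulling back divisors yields an exact sequence
$$
\Z\xrightarrow{\,\cdot[L]\,}\Cl(X)\xrightarrow{\,p^*\,}\Cl(N^*)\to 0,
$$
with $[L]$ the restriction of the hyperplane class, so (5) reduces to showing $\Cl(X)=\Z\cdot[L]$. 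The main obstacle is that $X$ is typically singular, so the classical Lefschetz hyperplane theorem does not apply directly; I would invoke Grothendieck--Lefschetz parafactoriality results (SGA 2) for the local complete intersection $X$, in combination with the homotopy computations $\pi_1(X)=0$ and $\pi_2(X)=\Z$ obtained in the proof of Theorem \ref{thm:two} and the vanishing of $H^1(X,\calO_X)$ coming from the rational singularities $X$ inherits from $N^*$.
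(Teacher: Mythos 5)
Items (1), (2), (4) and (6) are fine and essentially follow the paper's route (the paper handles (5) and (6) by citing \cite[Corollary 4.5]{HerbigSchwarzSeaton3}, but your graded-domain argument for (6) is the standard one). The genuine problem is your proof of (3). In the chain $\codim_M M_{(H)}=\dim N_0\git H=\codim_{N_0}\NN(N_0)=\codim_N N^{(H)}$ only the first equality (Lemma \ref{lem:codim}(1)) and the last (Lemma \ref{lem:strata}(1)) are correct; the middle ``identity'' is false, and so is your claim that $\dim H>0$ forces $N^{(H)}\subset N_\sing$. First, $\codim_{N_0}\NN(N_0)$ and $\dim N_0\git H$ differ in general because $N_0\to N_0\git H$ is not equidimensional: take $H=\C^\times$ acting on $W_0=\C^3$ with weights $(1,1,-2)$ (a faithful, stable, $1$-modular module which is itself minimal, so it can serve as $V$). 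Then $\dim N_0=5$ and $\dim N_0\git H=4$, but $\NN(N_0)$ contains the $3$-dimensional span of the two positive-weight vectors of $W_0$ and the positive-weight vector of $W_0^*$, so $\codim_{N_0}\NN(N_0)=2$. Second, $N^{(H)}=\pi\inv(M_{(H)})$ contains entire fibers of $\pi$, and generic points of those fibers (generic points of the null-cone components sitting over a closed orbit) have \emph{finite} isotropy, hence are smooth points of $N$; in the example just given, $N^{(G)}=\NN(N)$ has codimension $2$ in the minimal shell $N$. So minimality cannot be fed into (3) through $\codim_N N^{(H)}$. The repair is the paper's argument, using only the first equality: $\codim_M M_{(H)}=\dim N_0\git H$ is even, is at least $2$ since $W_0$ is a faithful $1$-modular module for the infinite group $H$, and cannot equal $2$ because $N$ has no symplectic slice representation of type O (Corollary \ref{cor:codim}); hence it is at least $4$.

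Concerning (5), your argument is only a sketch and is more roundabout than needed. The paper invokes the proof of \cite[Corollary 4.5]{HerbigSchwarzSeaton3}, which applies Grothendieck's SGA~2 factoriality criterion directly to $N$: a complete intersection local ring that is regular (hence factorial) in codimension at most $3$ is factorial, and $\codim_N N_\sing\geq 4$ supplies exactly this hypothesis at every point, in particular at the cone vertex. Your detour through $X=N^*/\C^\times$ leaves an unaddressed gap between $\operatorname{Pic}(X)$ --- which is what Lefschetz-type statements together with $\pi_1(X)$, $\pi_2(X)$ and $H^1(X,\calO_X)$ control --- and $\Cl(X)$, which is what the exact sequence for the principal $\C^\times$-bundle requires; bridging that gap again needs the local parafactoriality result, at which point one may as well apply it to $N$ directly.
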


\begin{proof}
By Remark \ref{rem:rational-sings} $N$ is stable with rational singularities
 and it is  $2$-principal by Lemma \ref{lem:codim.Npr},  giving (1).
 The open set $N_0$ consists of closed orbits with finite isotropy groups, hence lies in $N_\sm$ giving (2). The strata of $M$ are even dimensional  and   $N$ has no symplectic slice representations of type O, hence (3) holds.  The symplectic slice theorem gives (4).  The proof of \cite[Cor.\ 4.5]{HerbigSchwarzSeaton3} gives (5) and  \cite[Corollary 4.5(2)]{HerbigSchwarzSeaton3} gives (6).
\end{proof}

\begin{lemma}\label{lem:pi_*.surjective}
Let $G$, etc.\ be as above. Then $\pi_*\colon \A(N)^G\to\A(M)$ is surjective.
\end{lemma}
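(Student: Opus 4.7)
My plan is to reduce the problem to the open locus $N_0=\pi\inv(M_0)\subset N$, apply Lemma~\ref{lem:same.dim.lifting} to produce local lifts there, and extend them back to $N$ by normality. By Proposition~\ref{prop:good}(2), $N_0$ is the smooth locus of $N$, so $\codim_N(N\setminus N_0)\geq 4$; by Proposition~\ref{prop:good}(3), $\codim_M(M\setminus M_0)\geq 4$ as well. Since the tangent sheaves $\mathcal{T}_N=\mathcal{H}om(\Omega_N,\O_N)$ and $\mathcal{T}_M$ are reflexive on the normal varieties $N$ and $M$, their sections extend uniquely across closed subsets of codimension $\geq 2$. Thus $\A(N)^G=H^0(N_0,\mathcal{T}_{N_0})^G$ and $\A(M)=H^0(M_0,\mathcal{T}_{M_0})$, so it suffices to show that the induced map $\pi_*\colon H^0(N_0,\mathcal{T}_{N_0})^G\to H^0(M_0,\mathcal{T}_{M_0})$ is surjective.

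For the local lifts, I would cover $M_0$ by basic affine opens $M_f=\{f\neq 0\}$ with $f\in\C[M]$ and $M_f\subset M_0$; such a cover exists because $M$ is affine and $M\setminus M_0$ is closed. For each such $f$, the preimage $N_f=\pi\inv(M_f)$ is a basic affine open of $N$ contained in $N_0$, hence normal, $2$-principal (inherited from $N$ via Lemma~\ref{lem:codim.Npr}), with all $G$-orbits of dimension $\dim G$ (because $G$ acts with finite stabilizers on $N_0$). The hypotheses of Lemma~\ref{lem:same.dim.lifting} are satisfied, producing, for each $A\in\A(M)$, a local lift $B_f\in\A(N_f)^G$ of $A|_{M_f}$.

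The main obstacle is to assemble the $B_f$ into a global section on $N_0$. On each overlap $N_{fg}$ the difference $B_f-B_g$ lies in the kernel of $\pi_*$, namely the sheaf of $G$-invariant derivations of $\O_{N_0}$ tangent to $G$-orbits; on $N_0$ this descends to a coherent sheaf $\mathcal{K}$ on $M_0$, essentially the $G$-invariant sections of $\O_{N_0}\otimes\lie g$. The gluing obstruction is the \v{C}ech class of $\{B_f-B_g\}$ in $\check H^1(\{M_f\},\mathcal{K})$, and since each intersection $M_{fg}=M_f\cap M_g$ remains a basic affine of the affine variety $M$, Serre vanishing kills this obstruction, following the template of \cite[\S 5]{GWSQuotients} adapted to the present quasi-toral setting as remarked before Proposition~\ref{prop:good}. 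Having obtained a global lift $B_0\in H^0(N_0,\mathcal{T}_{N_0})^G$, reflexivity of $\mathcal{T}_N$ extends $B_0$ uniquely to $B\in\A(N)^G$; the equality $\pi_* B=A$ then holds because it holds on the dense open $M_0\subset M$ and both sides lie in the reflexive $\mathcal{T}_M$.
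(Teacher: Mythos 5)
Your overall architecture (local lifts on an affine cover of $M_0$ via Lemma~\ref{lem:same.dim.lifting}, a degree-one \v{C}ech obstruction for gluing, extension from $N_0$ to $N$ by normality) is the same as the paper's, but the step where you kill the obstruction is wrong. You claim that ``Serre vanishing kills this obstruction'' because the opens $M_f$ and $M_{fg}$ are basic affines of the affine variety $M$. Serre vanishing gives $H^i(X,\mathcal F)=0$ for $i>0$ only when $X$ itself is affine; here the \v{C}ech complex of the cover $\{M_f\}$ computes $H^1(M_0,\mathcal K)$, and $M_0$ is a \emph{non-affine} open subset of $M$ (the complement of a closed set of codimension $\geq 4$). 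Affineness of the pieces and their overlaps is exactly what makes the \v{C}ech complex compute sheaf cohomology of $M_0$; it does not make that cohomology vanish. If it did, the whole covering-and-gluing apparatus would be unnecessary. The actual vanishing argument is a depth argument: on $N_0$ the isotropy groups are finite, so (for $G$ a torus) the kernel sheaf is the \emph{free} module $\O_{M_0}^{\ell}$ generated by $\lieg$; $M$ has rational singularities by Boutot and is therefore Cohen--Macaulay; and $\codim_M(M\setminus M_0)\geq 4$ --- which is where minimality (no slice representations of type O) enters essentially --- forces $H^1(M_0,\O_M^{\ell})=0$ by \cite[Proposition 10.4]{GWSliftingHomotopies}. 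Your proposal cites Proposition~\ref{prop:good}(3) but never uses it for the vanishing, and it never uses Cohen--Macaulayness at all, so the heart of the lemma is missing.

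Two smaller points. First, your claim that $N_0$ is the smooth locus of $N$ and hence $\codim_N(N\setminus N_0)\geq 4$ is not justified: Proposition~\ref{prop:good}(2) says $N_0$ is smooth, not that it equals $N_\sm$, and the guaranteed bound is only $\codim_N(N\setminus N_0)\geq 2$ (from $N_\pr\subset N_0$ and $2$-principality); fortunately $\geq 2$ is all you need to extend sections of the reflexive tangent sheaf. Second, for $G$ quasi-toral but not connected the kernel sheaf is $(\O_{N_0}\otimes\lieg)^G$, which need not descend to a free sheaf on $M_0$; the paper handles this by first proving the torus case and then passing through $M^0=N\git G^0$ with an averaging over the finite group $G/G^0$, a reduction your argument omits.
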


\begin{proof}
First consider the case that $G$ is a torus.
Let $A\in\A(M)$.   Let $\{U_\alpha\}$ be an open cover of $M_0$ by affine open sets and let $N_\alpha=\pi\inv(U_\alpha)$ for all $\alpha$. Then the $U_\alpha$ are smooth  and all the $G$-orbits on $N_\alpha$ have dimension $\ell=\dim G$.  Since
 $N$
 is $2$-principal,   Lemma \ref{lem:same.dim.lifting} shows that there are $B_\alpha\in\A(N_\alpha)^G$ such that $\pi_*B_\alpha=A|_{U_\alpha}$. The differences $B_\alpha-B_\beta$ are $G$-invariant vector fields  which are tangent to the $G$-orbits.   Such vector fields on $N_0$ are the free $\C[M_0]$-module generated by $\lieg$, thought of as vector fields on $N_0$. Thus the $\{B_\alpha-B_\beta\}$ give us an element of $H^1(M_0,\O_M^\ell)$ where $\O_M$ denotes the sheaf of regular functions on $M$. By \cite{Boutot}, $M$ has rational singularities, hence is Cohen-Macaulay. Then Proposition  \ref{prop:good}(3) implies that the class $\{B_\alpha-B_\beta\}$ is trivial \cite[Proposition 10.4]{GWSliftingHomotopies} and hence there are elements $C_\alpha\in
 \C[N_\alpha]^G\otimes \lieg$
 such that $C_\alpha-C_\beta=B_\alpha-B_\beta$ for all $\alpha$, $\beta$. Let $B_\alpha'=B_\alpha-C_\alpha$
 for all $\alpha$.
 Then  $B_\alpha'$ and $B_\beta'$ agree on
 $N_\alpha\cap N_\beta$
 and we obtain an element
 $B_0\in \A(N_0)^G$
 such that $\pi_*B=A|_{M_0}$. Since $N$ is normal and $N_\pr\subset N_0$, $\C[N]=\C[N_0]$. Since $B_0$ is a $G$-invariant derivation of $\C[N_0]=\C[N]$, we may consider $B_0$ as an element $B\in \A(N)^G$. Since $A$ and $B$ have the same action on $\C[N]^G$,   $\pi_*B=A$.

Now we drop the assumption that $G$ is a torus. Let $M^0=N\git G^0$ so that $M=M^0/F$ where $F=G/G^0$.
Since $N$ is normal  and $2$-principal,  $M^0$ is normal and $2$-principal for the action of $F$.
By Lemma \ref{lem:same.dim.lifting}, any $A\in\A(M)$ lifts to $B\in \A(M^0)^F$ and by the paragraph above, $B$ lifts to $C\in\A(N)^{G^0}$ and averaging over $F$ we
can
arrange that $C\in\A(N)^G$.
\end{proof}

\begin{lemma}\label{lem:tan.spaces} Let $G$, etc.\ be as above.
Let $Q=M_{(H)}$ and let $q\in Q$.   Then
\begin{equation}\tag{$*$}
\{A(q)\mid A\in\pi_*\A(N)^G\}=T_qQ.
\end{equation}
\end{lemma}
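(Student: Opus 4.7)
The plan is to establish the two inclusions of $(*)$ separately. By Lemma~\ref{lem:pi_*.surjective}, $\pi_*\A(N)^G=\A(M)$, so $(*)$ reduces to showing that the evaluation $\A(M)\to T_qM$ at $q\in Q$ has image exactly $T_qQ$.

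For the inclusion $\subset$, I would show that each stratum closure is preserved by every derivation in $\pi_*\A(N)^G$. By Lemma~\ref{lem:general.stuff}(2), $\bar Q=\pi(N^H)$, so $I_{\bar Q}=I_{N^H}\cap\C[N]^G$, where $I_{N^H}\subset\C[N]$ is the defining ideal of the scheme-theoretic fixed locus $N^H$. Given $B\in\A(N)^G$, $G$-equivariance forces $B$ to commute with the pullback $h^*$ for each $h\in H$; applied to the generators $(h^*-1)f$ of $I_{N^H}$ (for $h\in H$, $f\in\C[N]$) together with the Leibniz rule, this yields $B(I_{N^H})\subset I_{N^H}$. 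Since $B$ also preserves $\C[N]^G$, $A:=\pi_*B$ preserves $I_{\bar Q}$, descends to a derivation of $\C[\bar Q]$, and hence $A(q)\in T_q\bar Q=T_qQ$; the last equality uses that $Q$ is smooth and open in $\bar Q$ by the symplectic slice theorem.

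For the inclusion $\supset$, I would use the local model given by the symplectic slice theorem: an \'etale $G$-invariant neighborhood of the closed orbit over $q$ identifies a neighborhood of $q$ in $M$ with $N_S\git H\simeq S^H\times(N_0\git H)$, and identifies $Q$ locally with $S^H\times\{\pi_{N_0}(0)\}$. Thus $T_qQ\cong T_0S^H$, and any $v\in T_0S^H$ is realized by a constant-coefficient vector field $D$ on $S^H$. Pulled back by the first projection, $D$ gives an $H$-invariant vector field on $N_S=S^H\times N_0$ whose image in $\A(N_S\git H)$ takes the value $v$ at $q$. The main challenge is globalization: one must lift this \'etale-local derivation to a global element $A\in\A(M)$ with $A(q)=v$. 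I would handle this either by a localization argument exploiting the finite generation of $\A(M)$ as a $\C[M]$-module, or by a \v Cech cohomology vanishing argument paralleling the proof of Lemma~\ref{lem:pi_*.surjective}, using the Cohen--Macaulay property of $M$ (by \cite{Boutot}) together with the codimension bound $\codim_M(M\setminus M_0)\geq 4$ from Proposition~\ref{prop:good}(3).
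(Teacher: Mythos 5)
Your reduction via Lemma \ref{lem:pi_*.surjective} and your argument for the inclusion $\subset$ are sound, and the latter is genuinely different from the paper: the paper gets both inclusions at once from an explicit computation in the local model, whereas you show directly that every $B\in\A(N)^G$ preserves the ideal of the fixed-point scheme $N^H$ (hence, after contracting to $\C[N]^G$ and using $\overline{Q}=\pi(N^H)$ from Lemma \ref{lem:general.stuff}, that $\pi_*B$ is tangent to $\overline{Q}$). That is a clean, self-contained route to $\subset$; the only detail to add is that the scheme-theoretic ideal generated by the $(h^*-1)f$ need not be radical, so you should invoke the characteristic-zero fact that a derivation preserving an ideal preserves its radical before passing to $I_{\overline Q}$.

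The inclusion $\supset$, however, has a genuine gap exactly at the point you flag. The symplectic slice theorem gives you the model $X=G\times^H U$ together with an \emph{excellent} morphism $X\to Y\subset N$, which is only \'etale on quotients: $\bar\rho\colon X\git G\to Y\git G$. Your constant vector field lives in $\A(X\git G)$, and neither of your two proposed fixes transports it to $M$. Localization using finite generation of $\A(M)$ only moves derivations between $M$ and Zariski-open affine subsets (that step is needed, but only at the very end, to pass from $Y\git G$ to $M$); and the \v{C}ech/Cohen--Macaulay argument of Lemma \ref{lem:pi_*.surjective} patches lifts over a Zariski open cover, which is a different problem from descending a single derivation along an \'etale map. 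The missing ingredient is the base-change isomorphism for modules of invariant derivations along excellent morphisms, $\A(X\git G)\simeq \C[X]^G\otimes_{\C[Y]^G}\A(Y\git G)$ (compatibly with $\pi_*$), which the paper quotes from the proof of \cite[Corollary 4.4]{GWSlifting}. With it, the image of the evaluation map at $q$ computed on $X\git G$ coincides, via the isomorphism $d\bar\rho_q\colon T_{q}(X\git G)_{(H)}\to T_q(Y\git G)_{(H)}$, with the image computed on $Y\git G$, and then localization finishes. Without some such descent statement your construction only produces the required tangent vectors for the model, not for $M$.
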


\begin{proof}  Let $Gx$ be a closed orbit in $N$ with $G_x=H$ and   $\pi(x)=q$.  Let $(S,H)$ be the symplectic slice representation at $x$ and decompose $N_S=S^H\times N_0$ as usual.  By the symplectic slice theorem, there is an  $H$-saturated affine neighborhood  $U$ of $0\in N_S$ and   an excellent morphism (see \cite[3.10--3.15]{HerbigSchwarzSeaton2})
$$
\rho\colon X=(G\times^HU)\to Y\subset N,\ \rho([e,0])= x,
$$
where  $Y$ is a $G$-saturated  affine  neighborhood of $x$ in $N$. By  \cite[Proof of Corollary 4.4]{GWSlifting}   we   obtain a commutative diagram
$$
\begin{CD}
\C[X]^G \otimes_{\C[Y]^G} \A(Y)^G @>\sim>> \A(X)^G   \\
 @VV \id \otimes (\pi_Y)_*V  @VV(\pi_X)_*V   \\
\C[X]^G \otimes_{\C[Y]^G} \A(Y\git G) @>\sim>>\A(X\git G).
\end{CD}
$$
Now $X$ is an affine open subset of $\widetilde X=G\times^HN_S\simeq (G\times^HN_0)\times S^H$ and we have a direct sum decomposition $\A(\widetilde X)^G=\Sigma_1\oplus \Sigma_2$ where
$$
\Sigma_1=\A(G\times^HN_0)^G\otimes\C[S^H]\text { and }\Sigma_2=\C[G\times^HN_0]^G\otimes\A(S^H).
$$
Note that $(\widetilde X\git G)_{(H)}\simeq S^H$.
Now $T_{[e,0]}(G\times^HN_0)=T_x(Gx)\oplus (S_0=T_0(N_0))$ and the value of $A\in \A(G\times^HN_0)^G$ at $[e,0]$ is in $T_x(Gx)^H\oplus S_0^H$.  Since the quotient mapping is constant on $Gx$, $T_x(Gx)^H$ is sent to $0$ and $S_0^H=(0)$. If $A\in\A(S^H)$, then $A(0)$  is an arbitrary element of $S^H$. This shows that $(*)$ holds for $M$ replaced by   $\widetilde X\git G$ or $X\git G$. Let $M'=X\git G$ and $M''=Y\git G$. Since $\rho$ is excellent, it induces an \'etale  morphism $\bar\rho\colon M'\to M''$ which restricts to an \'etale morphism   $M'_{(H)}\to M''_{(H)}$ and an isomorphism $T_q M'_{(H)}\to T_qM''_{(H)}$.  Since $(*)$ holds for $M'$, it holds for $M''$. Since $M''$ is an affine neighborhood of $q\in M$, $(*)$ holds for $M$.
\end{proof}

By the argument of \cite[Cor.\ 2.3]{SchVectorFields}, Lemmas \ref{lem:pi_*.surjective} and \ref{lem:tan.spaces} establish the following.

\begin{corollary}\label{cor:permute.comps}
Let $\phi$ be an algebraic automorphism of $M$ and let $Q$ be a stratum of $M$ of dimension $k$. Then $\phi(Q)$ is a stratum of $M$ of dimension $k$.
\end{corollary}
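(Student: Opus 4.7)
The plan is to follow the strategy of \cite[Cor.~2.3]{SchVectorFields}: give an intrinsic description of the stratification of $M$ in terms of the $\C[M]$-module $\A(M)$, and observe that such a description is automatically preserved by any algebraic automorphism. For each $q\in M$ set
$$
E_q := \bigl\{A(q)\mid A\in\A(M)\bigr\}\subseteq T_qM,\qquad d(q):=\dim_\C E_q.
$$
By Lemma~\ref{lem:pi_*.surjective} one has $\A(M)=\pi_*\A(N)^G$, and then Lemma~\ref{lem:tan.spaces} identifies $E_q$ with the tangent space $T_qQ$, where $Q$ denotes the unique stratum containing $q$. Since strata are smooth by Proposition~\ref{prop:good}(4), $d(q)=\dim Q$ is constant on each stratum.

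Next, I would show that $\phi$ preserves $d$. The automorphism $\phi$ induces a $\C$-linear bijection $\phi_*\colon\A(M)\tosim\A(M)$ characterized by $(\phi_*A)(\phi(q))=d\phi_q(A(q))$, so the differential $d\phi_q\colon T_qM\to T_{\phi(q)}M$ carries $E_q$ isomorphically onto $E_{\phi(q)}$. In particular $d(\phi(q))=d(q)$, and each sublevel set $M_{\leq k}:=\{q\in M\mid d(q)\leq k\}$ is $\phi$-invariant. Because $\A(M)$ is finitely generated as a $\C[M]$-module (standard for affine varieties), $d$ is lower semi-continuous as the rank of a matrix of generators evaluated at $q$, and hence each $M_{\leq k}$ is Zariski closed.

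Finally, I would recover the individual strata from the filtration $M_{\leq 0}\subseteq M_{\leq 1}\subseteq\dotsb$. For a stratum $Q$ of dimension $k$, the closure $\overline{Q}$ is an irreducible subvariety of $M_{\leq k}$ of dimension $k$; using Lemma~\ref{lem:general.stuff}(2) and the finiteness of the set of isotropy types, $M_{\leq k}$ is the finite union $\bigcup_{\dim Q'\leq k}\overline{Q'}$, so the $k$-dimensional irreducible components of $M_{\leq k}$ are exactly the stratum closures $\overline{Q'}$ with $\dim Q'=k$. Since $\phi$ is a variety automorphism preserving $M_{\leq k}$ setwise, it permutes these components; thus $\phi(\overline{Q})=\overline{Q'}$ for some stratum $Q'$ of dimension $k$, and subtracting the $\phi$-invariant closed subset $\overline{Q}\cap M_{\leq k-1}$ yields $\phi(Q)=Q'$. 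The main subtlety I anticipate is the identification of the $k$-dimensional irreducible components of $M_{\leq k}$ with stratum closures: this reduces to the observation that distinct strata of the same dimension have distinct closures (each stratum is open in its closure) together with the inclusion $\overline{M_{(H)}}=\bigcup_{(H')\geq(H)}M_{(H')}$ from Lemma~\ref{lem:general.stuff}(2).
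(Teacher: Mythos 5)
Your proposal is correct and follows essentially the same route as the paper, which simply invokes the argument of \cite[Cor.~2.3]{SchVectorFields} together with Lemmas \ref{lem:pi_*.surjective} and \ref{lem:tan.spaces}: the function $q\mapsto\dim\{A(q)\mid A\in\A(M)\}$ is intrinsic, equals the local dimension of the stratum through $q$, and its closed sublevel sets recover the stratification. The only point stated a bit loosely is the identification $Q=\overline{Q}\setminus M_{\leq k-1}$, which requires knowing that every stratum meeting $\overline{Q}\setminus Q$ has dimension strictly less than $k$; this does follow from Lemma \ref{lem:general.stuff}(2) combined with the pure-dimensionality of each $M_{(H')}$ from Lemma \ref{lem:codim}(1), so there is no actual gap.
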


\begin{corollary}\label{cor:lifting}
Let $G$, etc.\ be as above and let $\phi$ be an algebraic  automorphism of $M=N\git G$. Then there is an algebraic automorphism $\Phi$ of  $N$ which lifts $\phi$.
\end{corollary}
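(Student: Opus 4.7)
The plan is to construct $\Phi$ by first lifting $\phi$ on the open locus $N_0 = \pi^{-1}(M_0)$, where isotropy groups are finite, and then extending across the complement using normality and a codimension argument. This follows the strategy of \cite[Theorem 1.12]{GWSQuotients}, with Lemmas \ref{lem:pi_*.surjective} and \ref{lem:tan.spaces} playing the roles their analogues play there.

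First I would use Corollary \ref{cor:permute.comps} to observe that $\phi$ permutes the strata of $M$ preserving dimension, so in particular $\phi(M_0)=M_0$ and $\phi(M_\pr)=M_\pr$. Over $M_0$ the map $\pi$ has only finite stabilizers, $N_0$ is smooth by Proposition \ref{prop:good}(2), and the restriction to $M_\pr$ is a principal $G^0$-bundle up to a finite étale quotient since the stable $G^0$-module $V$ has TPIG (Lemma \ref{lem:smooth=principal}(1)).

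The core step is to build an algebraic lift $\Phi_0\colon N_0\to N_0$ of $\phi|_{M_0}$. The surjectivity of $\pi_*\colon \A(N)^G\to\A(M)$ (Lemma \ref{lem:pi_*.surjective}) together with the tangent-space identity of Lemma \ref{lem:tan.spaces} supplies the data needed to match the differential of $\phi$ with that of a candidate lift. Concretely, for $q\in M_0$ choose an excellent Luna slice neighborhood around $q$ and another around $\phi(q)$; since these points lie in the same stratum they admit isomorphic slice models, and $\phi$ pulls back to a local $G$-equivariant morphism of the two models. The resulting local lifts then need to be patched: the obstruction cocycle takes values in the sheaf of $G$-invariant vector fields tangent to $G$-orbits (the kernel of $\pi_*$), and its class in $H^1(M_0,\O_M^\ell)$ vanishes by the same Cohen–Macaulay argument (via Boutot, rational singularities of $M$) used in Lemma \ref{lem:pi_*.surjective}, combined with the codimension bound $\codim_M(M\setminus M_0)\geq 4$ of Proposition \ref{prop:good}(3).

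Finally, to pass from $\Phi_0$ to $\Phi$, I would use that $N$ is normal (Proposition \ref{prop:good}(1)) and $\codim_N(N\setminus N_0)\geq 2$ (in fact, at least $4$), so Hartogs' principle yields $\C[N]=\C[N_0]$; the ring automorphism $\Phi_0^*$ thereby extends uniquely to an automorphism of $\C[N]$, giving an algebraic morphism $\Phi\colon N\to N$. Applying the same construction to $\phi^{-1}$ produces its inverse, so $\Phi\in\Aut(N)$. I expect the main obstacle to be the patching step: uniqueness of local lifts only holds up to the free $G^0$-action on fibers, so one must verify algebraically that the comparison cocycle is a coboundary, which is exactly where the codimension-$4$ minimality hypothesis and the rational-singularities input are essential.
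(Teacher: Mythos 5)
Your overall strategy (lift locally over the good locus, patch, extend by normality) is in the spirit of the paper, which simply quotes \cite[Prop.\ 5.1, Prop.\ 5.2]{GWSQuotients} after reducing to the connected case via $M=M^0/F$, $M^0=N\git G^0$. But there is a genuine gap in your patching step. When you compare two local lifts of the \emph{automorphism} $\phi$ over an overlap $U_\alpha\cap U_\beta$, they differ by a morphism $U_\alpha\cap U_\beta\to G$, i.e.\ the obstruction to gluing is a \emph{multiplicative} cocycle valued in $G$ --- for $G^0$ a torus, essentially a class in $H^1(M_0,\O_M^*)^\ell$, i.e.\ a $G^0$-torsor (a tuple of line bundles) over $M_0$ --- not an additive cocycle in $H^1(M_0,\O_M^\ell)$. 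The Cohen--Macaulay/rational-singularities vanishing of $H^1(M_0,\O_M^\ell)$ that you invoke is exactly the right tool for lifting \emph{vector fields} (it is what Lemma \ref{lem:pi_*.surjective} uses), but it says nothing about the multiplicative obstruction. The triviality of that torsor is precisely where the factoriality of $N$ (Proposition \ref{prop:good}(5)) enters: in the paper's proof this is the hypothesis, together with $2$-principality, under which \cite[Prop.\ 5.2]{GWSQuotients} applies. Your argument never uses factoriality, so the patching step as written does not close.

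A secondary point: you work directly with the possibly disconnected $G$, whereas the paper first lifts $\phi$ from $M=M^0/F$ to $M^0$ (using that the $F$-action on $M^0$ is $2$-principal and \cite[Prop.\ 5.1]{GWSQuotients}) and only then lifts from $M^0$ to $N$. If you keep your direct approach you must justify the existence of local $G$-equivariant lifts near points of $M_0\setminus M_\pr$, where the stabilizers are finite but nontrivial; the two-step reduction avoids this. The final Hartogs extension step and the use of $\phi^{-1}$ to get invertibility are fine.
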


\begin{proof}
Let $M^0=N\git G^0$ so that $M=M^0/F$ where $F=G/G^0$. Then the action of $F$ on $M^0$ is $2$-principal and by \cite[Prop.\ 5.1]{GWSQuotients}, $\phi$ lifts to an automorphism of $M^0$, so we may reduce to proving that automorphisms of $M^0$ lift to $N$. The proof of \cite[Prop.\ 5.2]{GWSQuotients}   establishes lifting of automorphisms of $M^0$ to automorphisms of $N$ provided that $N$ is factorial and $2$-principal which is indeed the case by Proposition \ref{prop:good}.
\end{proof}

\begin{proposition}\label{prop:lifting}
Let $G$, etc.\ be as above.
Let $\phi$ be an algebraic  automorphism of $M$ and let $\Phi$ be an automorphism  lifting   $\phi$. Then $\Phi$ is $\sigma$-equivariant for an automorphism $\sigma$ of $G$.
\end{proposition}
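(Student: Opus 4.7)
The plan is to show that conjugation by $\Phi$ carries $G\subset\Aut(N)$ into itself; the resulting map $\sigma\colon g\mapsto \Phi\circ g\circ\Phi^{-1}$ is then automatically an algebraic automorphism of $G$ (with inverse $h\mapsto\Phi^{-1}\circ h\circ\Phi$, obtained by applying the same construction to $\Phi^{-1}$, which lifts $\phi^{-1}$), and the equivariance $\Phi\circ g=\sigma(g)\circ\Phi$ is tautological. That $\Phi\circ g\circ\Phi^{-1}$ should lie in $G$ is suggested by the fact that it lifts the identity on $M$:
\[
\pi\circ\Phi\circ g\circ\Phi^{-1}=\phi\circ\pi\circ g\circ\Phi^{-1}=\phi\circ\pi\circ\Phi^{-1}=\phi\circ\phi^{-1}\circ\pi=\pi.
\]
So the entire proof reduces to the following claim: \emph{every algebraic automorphism $\Psi$ of $N$ with $\pi\circ\Psi=\pi$ is translation by a unique element of $G$.}

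To establish this claim I would pass to the principal stratum. By Proposition \ref{prop:make.stable}(1) we may assume UTCLS that $V$ is stable. The principal isotropy of $V$ is then trivial: its intersection with $G^0$ is trivial by Lemma \ref{lem:smooth=principal}(1) applied to the torus $G^0$, and any finite subgroup of $G$ fixing a Zariski dense subset of $V$ must be trivial by faithfulness. By Lemma \ref{lem:general.stuff}(4), $\pi\colon N_\pr\to M_\pr$ is therefore a principal $G$-bundle. Any $\Psi$ lifting the identity on $M$ restricts to a bundle automorphism over the identity of $M_\pr$; because $G$ is abelian, such automorphisms correspond bijectively to regular morphisms $f\colon M_\pr\to G$ via $\Psi(x)=f(\pi(x))\cdot x$.

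The crux is to show that $f$ is constant. Proposition \ref{prop:good}(6) gives $\C[N]^\times=\C^\times$, and the inclusion $\C[M]=\C[N]^G\hookrightarrow \C[N]$ forces $\C[M]^\times=\C^\times$. Minimality of $N$ combined with Corollary \ref{cor:codim} excludes type O slices and thus forces every non-principal stratum of $M$ to have codimension at least $2$; as $M$ is normal, $\C[M_\pr]=\C[M]$ and hence $\C[M_\pr]^\times=\C^\times$. Because $M_\pr$ is irreducible, $f$ lands in a single coset of $G^0$; after translating by an element of that coset, $f$ becomes a regular map to $G^0=(\C^\times)^\ell$, which is then constant coordinatewise. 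Hence $\Psi$ restricts to translation by a single $g\in G$ on $N_\pr$, and by density of $N_\pr$ in $N$ one gets $\Psi(x)=g\cdot x$ on all of $N$.

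Applying the claim to $\Psi=\Phi\circ g\circ\Phi^{-1}$ produces $\sigma(g)\in G$; functoriality of conjugation makes $\sigma$ a group homomorphism, and algebraicity is built into the construction. The main obstacle is the constancy of $f\colon M_\pr\to G$, where the minimality hypothesis really enters through Proposition \ref{prop:good}(6) and the absence of codimension-one non-principal strata in $M$; everything else is bookkeeping or a direct consequence of earlier results.
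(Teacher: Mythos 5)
Your overall architecture is genuinely different from the paper's. The paper does not define $\sigma$ as conjugation by $\Phi$ inside $\Aut(N)$; instead it uses the cone structure to degenerate $\Phi$ to its linear part $\Phi'(0)\in\GL(V\oplus V^*)$, shows that $\Phi'(0)$ normalizes $G$ there, takes $\sigma$ to be conjugation by $\Phi'(0)$, and then proves equivariance of $\Phi$ itself by showing that a certain cocycle $\gamma(t,x,g)\in G$ defined on $\C\times N_\pr\times G$ extends to $N$ and is forced to equal $g$ because $\C[N]^\times=\C^\times$. Your reduction --- $\Phi\circ g\circ\Phi^{-1}$ lifts the identity, and lifts of the identity are translations by elements of $G$ --- is cleaner and avoids the degeneration entirely; the trade-off is that the paper's $\Phi'(0)$ is reused later (in Theorem \ref{thm:isomorphism}) to produce the linear isomorphism $\Gamma$, which your construction does not supply. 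The constancy mechanism at the core is the same in both proofs: invertible regular functions on a normal, $2$-principal $N$ are constants (Proposition \ref{prop:good}(5),(6) and Lemma \ref{lem:codim.Npr}).

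There are, however, two concrete gaps. First, you write ``because $G$ is abelian,'' but $G$ is only quasi-toral, and quasi-toral groups need not be abelian; more importantly, the identification of your $\Psi$ with a morphism $f\colon M_\pr\to G$ via $\Psi(x)=f(\pi(x))\cdot x$ presupposes that $\Psi$ is $G$-equivariant (that is what ``bundle automorphism'' means), which is essentially the statement you are trying to prove --- as it stands this step is circular. What you actually get from fiber-preservation and freeness is a morphism $f\colon N_\pr\to G$ with $\Psi(x)=f(x)\cdot x$; the repair is to run your constancy argument on $N_\pr$ rather than $M_\pr$, using that $\C[N_\pr]^\times=\C[N]^\times=\C^\times$ (normality plus $2$-principality), after which equivariance and the descent are consequences rather than inputs. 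Second, your justification of TPIG on $N_\pr$ is wrong as stated: the principal isotropy group of a generic point does not fix a Zariski dense subset of $V$ (only $V^H$, whose $G$-saturation is dense), and for non-abelian quasi-toral $G$ a faithful module can have nontrivial finite principal isotropy (e.g.\ $\Z/2$ acting by a swap together with a torus). Freeness of the $G$-action on $N_\pr$ is needed for $f$ to be single-valued, so this point requires a genuine argument (or a reduction to $G^0$, where triviality of the principal isotropy does follow from faithfulness); the paper's own $\gamma(t,x,g)$ quietly relies on the same fact, so this is a shared weak point, but your stated reason for it does not hold.
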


\begin{proof}
 We follow the proof of  \cite[Theorem 1.12, p.\ 180]{GWSQuotients}. Since $\phi$ induces an isomorphism of $N^G=(V\oplus V^*)^G$ with itself, we may assume that $\phi(0)=0$.   Since $N$ is a cone, we have an action of $\C^\times$ by scalar multiplication and this descends to an action on
 $M$.
 Let
 $\phi_t(y)=t\inv\cdot\phi(t\cdot y)$, $y\in M$, $t\in\C^\times$.
 As in   \cite[Proposition 2.9]{GWSQuotients}, $\Phi(0)=0$  and $\phi_0=\lim_{t\to 0}\phi_t$ exists and is an automorphism of $M$ which commutes with the action of $\C^\times$.  Now $T_0N=V\oplus V^*$   and    $\phi_0$ lifts to $\Phi'(0)\in \GL(V\oplus V^*)$ where $\Phi'(0)$ normalizes $G$.  Our automorphism  $\sigma$ of $G$ is conjugation by $\Phi'(0)$.

Set $\Phi_t=t\inv\circ\Phi\circ t$, $t\in \C^\times$, and set $\Phi_0=\Phi'(0)$. Set $\Psi_t:=\Phi_t\circ\Phi_0\inv$ and
 let $x\in N_\pr$. Then $\Psi_t(gx)=\gamma(t,x,g)\cdot \Psi_t(x)$ where $\gamma\colon\C\times N_\pr\times G\to G$ is a morphism. Since $N$ is $2$-principal and normal, $\gamma$ extends to a morphism $\C\times N\times G\to G$. Now $\gamma(0,x,g)=g$ for all  $x$ and $g$ so that  $g\inv\gamma(t,x,g)$ always lies in $G^0$. For $g$ fixed, $(t,x)\mapsto g\inv\gamma(t,x,g)$ is  a morphism $\C\times N\to G^0$. Since $\C[\C\times N]^*=\C[N]^*=\C^\times$, the morphism $g\inv\gamma(t,x,g)$ is independent of $t$ and $x$, hence equals $g\inv\gamma(0,x,g)=e$.  It follows that $\gamma(t,x,g)=g$ for all $t$, $x$ and $g$. Hence all the $\Psi_t$ are $G$-equivariant and our original $\Phi$ is $\sigma$-equivariant.
\end{proof}

The following is  Theorem \ref{mainthm:3}.

\begin{theorem}\label{thm:isomorphism}
For $i=1$, $2$, let $G_i$ be quasi-toral and $V_i$ a faithful $1$-modular
minimal
$G_i$-module with shell $N_i$.
 If
$$
N_1\git G_1\simeq N_2\git G_2
$$
 as affine varieties, then there is a linear isomorphism
$$
\Gamma\colon V_1\oplus V_1^*\tosim V_2\oplus V_2^*
$$
inducing  isomorphisms of $N_1$ and $N_2$ and of $G_1$ and $G_2$.
\end{theorem}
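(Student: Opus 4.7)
The plan is to mimic the proof of Proposition \ref{prop:lifting}, which linearizes an automorphism of a single symplectic quotient, now in the setting of an isomorphism between two distinct quotients. By Corollary \ref{cor:rank.G}, the hypothesis $N_1\git G_1\simeq N_2\git G_2$ forces $\dim G_1=\dim G_2$, $\dim V_1=\dim V_2$ and $\dim N_1=\dim N_2$. After splitting off trivial summands, I would assume $V_i^{G_i}=(0)$, so that each $M_i$ has a distinguished vertex $0=\pi_i(0)$ corresponding to the nilcone, and any isomorphism $\phi\colon M_1\tosim M_2$ can be arranged to send $0$ to $0$.

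Next I would lift $\phi$ to an isomorphism $\Phi\colon N_1\tosim N_2$. The proof of Corollary \ref{cor:lifting} rests on \cite[Prop.~5.1, Prop.~5.2]{GWSQuotients}; their hypotheses---normality, factoriality and $2$-principality of each $N_i$, together with the same properties of $M_i^0=N_i\git G_i^0$ as a quotient by the finite group $G_i/G_i^0$---are all supplied by Proposition \ref{prop:good} independently for $i=1$ and $i=2$. Since the lifting arguments of loc.\ cit.\ impose these conditions only on the source and target individually rather than exploiting coincidence of the two, they extend from self-maps to the present two-variety situation and yield a lift $\Phi$ of $\phi$.

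To linearize I would exploit the fact that each $N_i\subset V_i\oplus V_i^*$ is a cone: since $\mu_i$ is homogeneous of degree two, the scalar $\C^\times$-action preserves $N_i$ and descends to $M_i$. Setting $\phi_t(y):=t\inv\cdot\phi(t\cdot y)$, the limit $\phi_0:=\lim_{t\to 0}\phi_t$ exists and is a $\C^\times$-equivariant isomorphism $M_1\tosim M_2$ by the argument of \cite[Prop.~2.9]{GWSQuotients}. Lifting $\phi_0$ produces a $\C^\times$-equivariant isomorphism $\Phi_0\colon N_1\tosim N_2$ fixing the origin. Because the ideal of $N_i$ in $V_i\oplus V_i^*$ is generated by the homogeneous quadratic components of $\mu_i$, the Zariski tangent space $T_0 N_i$ coincides with $V_i\oplus V_i^*$, so $\Phi_0$ is the restriction of a unique linear isomorphism $\Gamma\colon V_1\oplus V_1^*\tosim V_2\oplus V_2^*$ that carries $N_1$ onto $N_2$.

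For the isomorphism of groups I would imitate the last paragraph of the proof of Proposition \ref{prop:lifting}. Setting $\Phi_t:=t\inv\circ\Phi\circ t$ and $\Psi_t:=\Phi_t\circ\Gamma\inv$, for $x\in(N_1)_\pr$ and $g\in G_1$ one writes $\Psi_t(gx)=\gamma(t,x,g)\cdot\Psi_t(x)$ with $\gamma\colon\C\times N_1\times G_1\to G_2$. The $2$-principality and normality of $N_1$ extend $\gamma$ to all of $\C\times N_1\times G_1$, and $\C[N_1]^*=\C^\times$ then forces $\gamma(t,x,g)$ to be independent of $(t,x)$. Its value at $t=0$ identifies with conjugation by $\Gamma$, giving an isomorphism $\sigma\colon G_1\tosim G_2$ with $\Gamma\circ g=\sigma(g)\circ\Gamma$. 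The hard part will be the first step: checking carefully that the lifting arguments of \cite{GWSQuotients}, originally framed for automorphisms of a single quotient, do indeed go through verbatim when source and target are distinct. Once that is settled, the remainder is a direct adaptation of the machinery already developed in Section \ref{sec:lifting}.
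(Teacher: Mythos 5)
Your overall architecture (lift, linearize via the cone structure, then extract $\sigma$ from $\gamma(t,x,g)$) is the right one, but it rests entirely on the claim that the lifting results \cite[Prop.~5.1, Prop.~5.2]{GWSQuotients} ``extend from self-maps to the present two-variety situation,'' and you explicitly leave that unverified. This is not a cosmetic issue. Those arguments lift an automorphism of $X\git G$ for a \emph{fixed} group $G$ acting on both source and target; over the principal stratum one is comparing two principal $G$-bundles for the \emph{same} $G$. In your situation the structure groups $G_1$ and $G_2$ of $(N_1)_\pr\to (M_1)_\pr$ and $(N_2)_\pr\to (M_2)_\pr$ are not yet known to be isomorphic --- Corollary \ref{cor:rank.G} only gives $\dim G_1=\dim G_2$, and the isomorphism $G_1\simeq G_2$ is part of the conclusion. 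So the lifting step is genuinely different from a ``verbatim'' transcription, and as written your proof is circular at exactly the point you flag as the hard part.

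The paper sidesteps this entirely with the product trick of \cite[Corollary 1.14]{GWSQuotients}: set $G=G_1\times G_2$, $V=V_1\oplus V_2$, $N=N_1\times N_2$, $M=M_1\times M_2$, and from the given isomorphism $\psi\colon M_1\tosim M_2$ (normalized so $\psi(0)=0$) form the order-two automorphism $\phi(m_1,m_2)=(\psi\inv(m_2),\psi(m_1))$ of the \emph{single} quotient $M=N\git G$. Since $V_1\oplus V_2$ is again a faithful $1$-modular minimal $G_1\times G_2$-module (the singular locus of $N_1\times N_2$ still has codimension $\ge 4$), Corollary \ref{cor:lifting} and Proposition \ref{prop:lifting} apply as stated and give a $\sigma$-equivariant lift $\Phi$ of $\phi$; by \cite[Lemma 19]{Kuttler} $\Phi$ preserves closed orbits, hence fixes $0$ and interchanges $N_1\times\{0\}$ and $\{0\}\times N_2$, and $\Phi'(0)$ restricts to the desired $\Gamma$ while $\sigma$ interchanges $G_1$ and $G_2$. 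If you want to salvage your route, either adopt this reduction, or supply the missing argument that an isomorphism $M_1\to M_2$ lifts --- which in effect requires first producing the identification of $G_1$ with $G_2$ that the product trick delivers for free.
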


\begin{proof}
We follow the proof of \cite[Corollary 1.14]{GWSQuotients}. Let $M_i=N_i\git G_i$, $i=1$, $2$.
Let $N=N_1\times N_2$ , $M=M_1\times M_2$ and $G=G_1\times G_2$. Let $\psi\colon M_1\to M_2$ be the isomorphism. Clearly, $\psi$ sends
$$
N_1^{G_1}\simeq V_1^{G_1}\oplus (V_1^{G_1})^* \text{ isomorphically onto }  N_2^{G_2} \simeq  V_2^{G_2}\oplus (V_2^{G_2})^*.
$$
We may thus assume that $\psi(0)=0$.
Let $(m_1,m_2)\in M$ and define   $\phi(m_1,m_2)=(\psi\inv(m_2),\psi(m_1))$. Then $\phi$ is an automorphism of $M$ of order $2$ and fixes the image of $0$. By Corollary \ref{cor:lifting} and Proposition \ref{prop:lifting} $\phi$ has a $\sigma$-equivariant lift $\Phi\colon N\to N$. By \cite[Lemma 19]{Kuttler}, $\Phi$ sends closed orbits to closed orbits, hence $\Phi(0)=0$.  Since $\Phi$ lies over $\phi$ and sends closed orbits to closed orbits, it sends $(N_1)_\pr\times \{0\}$ isomorphically onto $\{0\}\times (N_2)_\pr$, hence it interchanges $N_1\times\{0\}$ and $\{0\}\times N_2$.
Hence
$\Phi'(0)$ restricts to a linear isomorphism
$$
\Gamma\colon V_1\oplus V_1^*\to V_2\oplus V_2^*.
$$
Since $\Phi$ and $\Phi'(0)$ are $\sigma$-equivariant, $\sigma$ interchanges $G_1$ and $G_2$. Hence $\Gamma$ induces an isomorphism of $G_1$ and $G_2$ and sends $N_1$ onto $N_2$.
\end{proof}

The following lemma is needed to deduce Theorem \ref{mainthm:6} from Theorem \ref{thm:main2}.

\begin{lemma}\label{lem:lifting}
Let $G$, etc.\ be as usual with $G$ connected, i.e., a torus. Suppose that a finite group $\Gamma$ acts effectively and algebraically on $M$. Then there is a reductive group $\tilde G$  which is an extension of $G$ by $\Gamma$ which acts on $N$ such that the action of
  $\tilde G/G$ on $M$ is the given action of $\Gamma$.
\end{lemma}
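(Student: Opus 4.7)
The plan is to realise $\tilde G$ as a subgroup of $\Aut(N)$. For each $\gamma\in\Gamma$, Corollary \ref{cor:lifting} supplies an algebraic automorphism $\Phi_\gamma$ of $N$ lifting $\gamma$'s action on $M$, and by Proposition \ref{prop:lifting} any such lift is $\sigma_\gamma$-equivariant for some $\sigma_\gamma\in\Aut(G)$. Define
\[
\tilde G := \{\Phi\in\Aut(N)\mid \Phi \text{ descends to an element of } \Gamma\subset\Aut(M)\},
\]
with the natural projection $p\colon\tilde G\to\Gamma$. By Corollary \ref{cor:lifting} the map $p$ is surjective, and the torus $G$, acting on $N$ via its embedding in $\Aut(N)$ (which is faithful since $V$ is), sits naturally inside $\ker p$.

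The key technical step, and the main obstacle, is to show $\ker p=G$. Let $\Phi\in\ker p$ and choose $\sigma\in\Aut(G)$ with $\Phi\circ h=\sigma(h)\circ\Phi$ for all $h\in G$. Since $\Phi$ preserves each fibre of $\pi$ and, by Lemma \ref{lem:smooth=principal}(1) (torus case), $V$ has TPIG so that $G$ acts freely on $N_\pr$, there is a unique $g(x)\in G$ with $\Phi(x)=g(x)\cdot x$ for each $x\in N_\pr$, defining a morphism $g\colon N_\pr\to G$. A short computation with $\sigma$-equivariance and the freeness of the action yields $g(h\cdot x)=\chi(h)g(x)$, where $\chi(h):=\sigma(h)h^{-1}$ is a homomorphism $G\to G$ (using commutativity of $G$). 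By Lemma \ref{lem:codim.Npr}(1) and the normality of $N$ from Proposition \ref{prop:good}(1), $g$ extends to a morphism $N\to G\simeq(\C^\times)^\ell$; Proposition \ref{prop:good}(6) ($\C[N]^*=\C^\times$) then forces $g$ to be constant. Hence $\chi\equiv 1$, so $\sigma=\id$, and $\Phi$ is simply the action of some $g_0\in G$, establishing $\ker p=G$ and the exact sequence $1\to G\to\tilde G\to\Gamma\to 1$.

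It remains to endow $\tilde G$ with the structure of a reductive algebraic group. Choose a set-theoretic section $\gamma\mapsto\Phi_\gamma$ with $\Phi_e=\id$, so $\tilde G=\bigsqcup_{\gamma\in\Gamma}G\cdot\Phi_\gamma$ is a disjoint union of $|\Gamma|$ copies of $G$, and the multiplication is given by
\[
(g_1\Phi_\gamma)(g_2\Phi_\delta)=g_1\sigma_\gamma(g_2)c(\gamma,\delta)\Phi_{\gamma\delta},
\]
where $c(\gamma,\delta)\in G$ is determined by $\Phi_\gamma\Phi_\delta=c(\gamma,\delta)\Phi_{\gamma\delta}$. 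Since each $\sigma_\gamma$ is an algebraic automorphism of $G$ and $c$ is a fixed $2$-cocycle valued in $G$, this rule is algebraic, and $\tilde G$ becomes an algebraic group with identity component $G$ (a torus) and finite component group $\Gamma$, hence reductive. By construction $\tilde G$ acts on $N$ and $\tilde G/G\simeq\Gamma$ acts on $M$ as prescribed. The critical input is the minimality of $N$, entering both through the $2$-principality of Lemma \ref{lem:codim.Npr}(1) and through the absence of non-constant units on $N$ in Proposition \ref{prop:good}(6).
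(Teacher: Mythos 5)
Your proposal is correct and follows essentially the same route as the paper: lift each $\gamma\in\Gamma$ via Corollary \ref{cor:lifting} and Proposition \ref{prop:lifting}, show that any lift of the identity lies in $G$ by the ``no nonconstant units on $N$'' argument (which is exactly what the paper means by ``as in the proof of Proposition \ref{prop:lifting}''), and assemble $\tilde G=\bigcup_\gamma G\tilde\gamma$ into a quasi-toral reductive group. Your write-up merely makes explicit two points the paper leaves terse — the computation showing $\ker p=G$ (using TPIG, $2$-principality, normality and $\C[N]^*=\C^\times$) and the cocycle description of the group law — so no further changes are needed.
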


\begin{proof}
By Proposition \ref{prop:lifting} every $\gamma\in\Gamma$ has a (non unique) lift $\tilde\gamma\in\Aut(N)$ which normalizes $G$. If $\tilde\gamma'$ is also a lift of $\gamma$, then $\tilde\gamma'\circ\tilde\gamma\inv$ is a lift of the identity. As in the proof of Proposition \ref{prop:lifting}, one shows that $\tilde\gamma'\circ\tilde\gamma\inv\in G$. If $\gamma_1$, $\gamma_2\in\Gamma$, then $\tilde\gamma_1\tilde\gamma_2$ is a lift of $\gamma_1\gamma_2$ and one easily sees that
$$
\widetilde G:=\bigcup_{\gamma\in\Gamma}\tilde\gamma G
$$
is a quasi-toral reductive group acting on $N$ such that $\widetilde G/G$ acts on $M$ in the same way as $\Gamma$.
\end{proof}

The following is  Theorem \ref{mainthm:6}.
\begin{theorem}\label{thm:main3}
Let $G$ be quasi-toral and $V$ a $1$-modular faithful $G$-module with shell $N$. There is a linear subspace $V'\subset V$ with the following properties.
\begin{enumerate}
\item Let  $G'\subset G$ be the stabilizer of $V'$. Then $V'$ is a $1$-modular
 faithful $G'$-module.
\item There is a $G'$-equivariant  inclusion $N':=N_{V'}\to N$ inducing
a Poisson
isomorphism $N'\git G'\simeq N\git G$.
\item
$N'$ is minimal.
\end{enumerate}
If $V$ is a stable $G$-module, then $V'$ is a stable $G'$-module.
\end{theorem}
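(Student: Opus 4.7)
The plan is to derive Theorem \ref{thm:main3} from its torus-case version, Theorem \ref{thm:main2}, by treating the identity component $G^0$ (which is a torus since $G$ is quasi-toral) first and then handling the finite quotient $F := G/G^0$ via Lemma \ref{lem:lifting}.

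First I would apply Theorem \ref{thm:main2} to the $G^0$-module $V$. This produces a subspace $V_1 \subset V$ and a subtorus $G_1^0 \subset G^0$ (the $G^0$-stabilizer of $V_1$) such that $V_1$ is a faithful $1$-modular $G_1^0$-module with minimal shell $N_1 := N_{V_1}$, the natural $G_1^0$-equivariant inclusion $N_1 \hookrightarrow N$ induces a Poisson isomorphism $N_1 \git G_1^0 \tosim N \git G^0$, and $V_1$ is stable whenever $V$ is. The $G$-action on $N$ induces an $F$-action on $N \git G^0$; transporting through the above isomorphism gives an action on $N_1 \git G_1^0$, and I let $\bar F$ denote its effective quotient. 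Lemma \ref{lem:lifting} then produces a quasi-toral extension $\tilde G_1$ of $G_1^0$ by $\bar F$ acting algebraically on $N_1$ and realizing this action, so that $N_1 \git \tilde G_1 \simeq N \git G$ as Poisson varieties.

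The main obstacle is to realize $\tilde G_1$ as $G' := \mathrm{Stab}_G(V')$ inside the ambient group $G$ for a suitable subspace $V' \subset V$. The $V_1$ produced in the first step need not be $F$-stable, so its setwise stabilizer in $G$ need not contain lifts of $F$. I would resolve this by refining the iterative construction in the proof of Theorem \ref{thm:main2} to be $F$-equivariant: whenever $F$ permutes several $G^0$-conjugacy classes of symplectic slice representations of type O, one reduces all slices in a common $F$-orbit simultaneously, rather than picking a single representative. Corollary \ref{cor:type.O.G^0.vs.G} ensures that the type-O condition coincides for $G$ and $G^0$, so this $F$-equivariant iteration terminates precisely when the resulting shell is minimal for both groups. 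The subspace $V'$ so produced is $F$-stable by construction, so $G' := \mathrm{Stab}_G(V')$ satisfies $G' \cap G^0 = G_1^0$ and $G'/G_1^0 \simeq \bar F$, identifying $G'$ with $\tilde G_1$. Faithfulness and $1$-modularity of $V'$ as a $G'$-module follow from the corresponding properties of $V_1$ over $G_1^0$ together with Lemma \ref{lem:k-modular-torus}; the Poisson isomorphism $N' \git G' \simeq N \git G$ follows by quotienting the step-one isomorphism by $\bar F$; and stability of $V'$ over $G'$ is preserved when $V$ is stable by the same mechanism.
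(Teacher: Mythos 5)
Your overall architecture (apply Theorem \ref{thm:main2} to the torus $G^0$, then handle $\Gamma=G/G^0$ via Lemma \ref{lem:lifting}) matches the paper's, but the step you yourself flag as ``the main obstacle'' is exactly where your argument has a genuine gap, and the paper resolves it by a different mechanism. You propose to rerun the construction of Theorem \ref{thm:main2} $F$-equivariantly, reducing whole $F$-orbits of type-O slices simultaneously so that the resulting $V'$ is ``$F$-stable by construction.'' That claim is asserted, not proved, and it is far from automatic: the reduction of a single type-O slice replaces $W$ by the specific line $W_D=\C\cdot v_0$ with $v_0=(\sqrt{m_1},\dots,\sqrt{m_k})$, and an element of $G$ carrying one such slice to another permutes the weight spaces with scalars, sending $v_0$ to $\sum_i c_i\sqrt{m_i}\,e_{\sigma(i)}$, which lies on the corresponding line $W_D'$ only if the $c_i$ all coincide. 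In addition, the UTCLS normalizations (sign changes of characters) used repeatedly in the proof of Theorem \ref{thm:main2} need not be compatible with the action of $G/G^0$, and distinct type-O slices in one orbit may share weight spaces, so ``simultaneous reduction'' is not even well defined as stated. None of this is addressed.

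The paper avoids all of these difficulties: it takes the $V'$ and $G''\subset G^0$ produced by Theorem \ref{thm:main2} with no equivariance whatsoever, and proves \emph{a posteriori} that the lifted automorphisms preserve $V'$. Concretely, for $\tilde g\in\widetilde G$ and $n\in N'_\pr$ one picks $g\in G$ inducing the same class on $N\git G^0$, so that $g\inv\tilde g n=\rho(n)n$ for a morphism $\rho\colon N'_\pr\to G^0$; since $N'$ is normal and $2$-principal and $\C[N']^*=\C^\times$, $\rho$ extends and is constant, so $\tilde g$ is the restriction to $N'$ of an element $gg_0\in G$. That element maps $N'$ onto $N'$ and hence preserves $V'=N'\cap(V\times\{0\})$; one then checks that the resulting finite extension $G'\supset G''$ is precisely the stabilizer of $V'$ in $G$. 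This rigidity argument is the idea missing from your proposal; with it, no $F$-equivariant refinement of Theorem \ref{thm:main2} is needed. As written, your proof is incomplete at its central step.
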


\begin{proof}
By Theorem \ref{thm:main2} we have the following. There is a subgroup $G''\subset G^0$ which is the stabilizer of a linear subspace $V'\subset V$.
 Moreover, $V'$ is a $1$-modular faithful $G''$-module. If $V$ is a stable $G$-module, then $V'$ is a stable $G''$-module.
 Let $N'=N_{V'}$. There is a $G''$-equivariant  injection $N'\to N$ inducing
 a Poisson isomorphism $N'\git G''\simeq N\git G^0$ where
$N'$ is minimal.
Now $\Gamma=G/G^0$ acts algebraically and effectively on $N\git G^0\simeq N'\git G''$, and by Lemma \ref{lem:lifting} the action of $\Gamma$ on $N'\git G''$ comes from the action of a reductive group $\widetilde G\supset G''$ where $\widetilde G/G''\simeq\Gamma$.
Let $n\in N'_\pr$ (relative to the action of $G''$) and let $\tilde g\in  \widetilde G$. There is a $g\in G$ such that $g\inv \tilde gn\in G^0n$. Thus we have a morphism $\rho\colon N'_\pr\to G^0$ such that $g\inv \tilde gn=\rho(n)n$, $n\in N'_\pr$. As before, $\rho$ extends to a morphism $N'\to G^0$ which is constant since $\C[N']^*=\C^\times$. Thus there is a $g_0\in G^0$ such that $g_0\inv g\inv \tilde g$ is the identity on $N'$. Hence $\tilde g$ is the restriction to $N'$ of $gg_0\in G$.  If $\tilde g$ is the restriction of $g_1$ and $g_2\in G$ to $N'$, then $h=g_1\inv g_2\in G^0$ and $h$ preserves $N'$, hence preserves $N'\cap (V\times\{0\})=V'$. Thus $h\in G''$.  It follows that  there is a
finite extension $G'$ of $G''$ with $G''\subset G'\subset G$ such that $G'$ stabilizes $V'$ and induces $\Gamma$ acting on $N'\git G''$.
If $g\in G$ and $gV'=V'$, then there is a $g'\in G'$ which has the same image as $g$ in $\Aut(N'\git G'')$. Then $g\inv g'\in G^0$ preserves $N'$, hence $V'$, and by Theorem
\ref{thm:main2},
$g\inv g'\in G''$. Hence $g\in G'$.
Note that $V'$ is a $1$-modular and faithful $G'$-module.
We have established (1)--(3).
If $V$ is stable, then
$(G'', V')$ is stable so that as $G'$ is a finite extension of $G''$, $(G', V')$ is stable as well.
\end{proof}

 \begin{corollary}\label{cor:V'/G'.to.V/G}
 The inclusion $V'\to V$ induces an isomorphism $V'\git G'\tosim V\git G$.
 \end{corollary}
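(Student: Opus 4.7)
The plan is to introduce an auxiliary $\C^\times$-action on $V\oplus V^*$ that identifies $\C[V]^G$ with the weight-zero subalgebra of $\C[N]^G$ (and likewise for $V'$, $N'$), and then extract the corollary by $\C^\times$-equivariance from Theorem \ref{thm:main3}(2).

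First I would consider the action of $\C^\times$ on $V\oplus V^*$ by $t\cdot(v,\xi)=(v,t\xi)$. Since the moment map $\mu$ is linear in $\xi$ (its components have the form $\sum_j a_{Ij}x_j\xi_j$), we have $\mu(v,t\xi)=t\mu(v,\xi)$, so $N$ is $\C^\times$-stable. The action commutes with $G$, hence descends to $N\git G$ and equips $\C[N]^G$ with a $\Z$-grading by $\xi$-degree. Next I would verify that the weight-zero component equals $\C[V]^G$: the defining ideal $I=(\mu_1,\dots,\mu_\ell)$ of $N$ in $\C[V\oplus V^*]$ is generated in $\xi$-weight one, so $I\cap\C[V]=(0)$ and the composition $\C[V]\hookrightarrow\C[V\oplus V^*]\twoheadrightarrow\C[N]$ identifies $\C[V]$ with the weight-zero subalgebra of $\C[N]$; taking $G$-invariants realizes $\C[V]^G$ as pullback along the embedding $V\hookrightarrow N$, $v\mapsto(v,0)$. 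The same analysis applies verbatim to $V'\subset V'\oplus(V')^*$ and its shell $N'$.

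Finally, the inclusion $N'\hookrightarrow N$ of Theorem \ref{thm:main3}(2) is (by the constructions in Theorem \ref{thm:main2} and Corollary \ref{cor:reduce.N_0}) induced from a $G'$-equivariant linear embedding $V'\oplus(V')^*\hookrightarrow V\oplus V^*$, obtained by choosing a $G'$-stable complement of $V'$ in $V$ and dualizing. This embedding is manifestly $\C^\times$-equivariant, so the restriction-of-functions isomorphism $\C[N]^G\tosim\C[N']^{G'}$ is $\C^\times$-equivariant and restricts to an isomorphism of weight-zero components $\C[V]^G\tosim\C[V']^{G'}$. Chasing through the identifications, an element $f\in\C[V]^G$ is viewed as a $\xi$-independent function on $N$, and its restriction to $N'$ is $\xi$-independent there, hence corresponds to $f|_{V'}\in\C[V']^{G'}$. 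Thus the weight-zero isomorphism is exactly the restriction induced by $V'\hookrightarrow V$, giving $V'\git G'\tosim V\git G$.

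I expect the only step requiring real care is the identification of $\C[V]^G$ with the weight-zero component of $\C[N]^G$, which hinges on $\mu$ having pure $\xi$-degree one so that the defining ideal is homogeneous in the $\C^\times$-grading. Once that is in place, the corollary is a formal consequence of the $\C^\times$-equivariance of the isomorphism in Theorem \ref{thm:main3}(2).
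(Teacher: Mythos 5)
Your argument is correct, but it is not the route the paper takes. The paper's proof is orbit-theoretic and very short: since the isomorphism $N'\git G'\simeq N\git G$ is induced by the inclusion $N'\to N$, every closed $G$-orbit in $V\times\{0\}\subset N$ meets $N'$ in a unique closed $G'$-orbit, which necessarily lies in $V'\times\{0\}$, and conversely; this bijection of closed orbits is exactly an isomorphism $V'\git G'\tosim V\git G$. Your proof instead exploits the bigrading: the $\C^\times$-action $t\cdot(v,\xi)=(v,t\xi)$ preserves $N$ because $\mu$ is bilinear, the defining ideal $(\mu_1,\dots,\mu_\ell)$ is generated in $\xi$-degree one (and is the full vanishing ideal since $N$ is a reduced complete intersection by $1$-modularity), so the weight-zero part of $\C[N]^G$ is $\C[V]^G$; and the embedding $N'\to N$ constructed in Theorem \ref{thm:main2} is assembled from $\rho\oplus\rho^*$ on the $W$-factor and the identity elsewhere, hence preserves the $V$/$V^*$ splitting and is $\C^\times$-equivariant, so the graded isomorphism $\C[N]^G\tosim\C[N']^{G'}$ restricts in degree zero to $f\mapsto f|_{V'}$. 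Both arguments are sound; the paper's is shorter and needs no inspection of how $N'\to N$ was built (only that it induces an isomorphism of quotients and sends $V'\times\{0\}$ into $V\times\{0\}$), while yours gives slightly more, namely an explicit identification of $\C[V]^G$ as a graded direct summand of $\C[N]^G$ compatible with the reduction, which meshes well with the graded/Poisson statements elsewhere in the paper.
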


 \begin{proof}
Every closed $G$-orbit in $V\simeq V\times\{0\}\subset N$ intersects $N'$ in a unique closed $G'$-orbit which must lie in $V'\times\{0\}$.  Similarly, every closed $G'$-orbit in $V'\times\{0\}$ is contained in a (unique) closed $G$-orbit in $V\times\{0\}$. Hence $V'\to V$ induces an isomorphism $V'\git G'\simeq V\git G$.
 \end{proof}

\section{Consequences for Real Symplectic Quotients}\label{sec:real}

Let $K$ be a compact Lie group, let $(V,K)$ be a unitary $K$-module and let $G=K_\C$. The $K$-action on $V$ extends to an action of $G$.
Let $\rho\colon V\to\lie k^*$ denote the real homogeneous moment map in terms of the underlying structure of $V$ as a real symplectic vector space.
Let $Z = Z_V = \rho^{-1}(0)$ denote the \emph{real shell},
an algebraic
subset of $V$. Algebraically, $Z$ corresponds to the ideal $I_\rho$ of $\R[V]$ generated by the component functions of $\rho$.
 The \emph{real symplectic quotient} is  $Z/K$.
We say that $K$ is \emph{real quasi-toral}  if $K^0$ is a torus; equivalently, if $G$ is quasi-toral.
In the rest of this section we assume that $K$ is quasi-toral and  we
establish
analogs of Theorems \ref{mainthm:6} and \ref{mainthm:3} for the real shell and symplectic quotient of  faithful
unitary representations of real quasi-toral groups. To do this we will need to assume that $V$ is faithful and $1$-large (equivalently, faithful and stable) as $G$-module.

 \begin{lemma}\label{lem:1-large}
Let $V$   be a faithful $K$-module with real shell $Z$. Let $G=K_\C$.
\begin{enumerate}
\item Every closed $G$-orbit in $V$ intersects $Z$ in a unique $K$-orbit and every point of $Z$ lies on a closed $G$-orbit. The inclusion $Z\to V$ induces a homeomorphism $Z/K\to V\git G$.
\end{enumerate}
Now suppose that $V$ is   $1$-large as  $G$-module.
\begin{enumerate}
\addtocounter{enumi}{1}
\item The ideal $I_\rho$ consists of all the real polynomial functions vanishing on $Z$.  Hence the
\emph{Poisson algebra $\R[Z/K]$ of real regular functions on $Z/K$} is
$\R[V]^K/I_\rho^K$.
\item  The real dimension of $Z$ is the same as the complex dimension of the complex shell $N$.
\end{enumerate}
\end{lemma}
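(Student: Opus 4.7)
The plan is to handle (1) via the Kempf--Ness theorem and to deduce (2) and (3) by identifying $Z$ as a Zariski-dense real form of the complex shell $N\subset U:=V\oplus V^*$, using the $1$-large hypothesis to force $N$ to be reduced.

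For (1), I would invoke Kempf--Ness in its standard form: for a unitary $K$-module $V$ with complexification $G=K_\C$, a $G$-orbit $Gv\subset V$ meets $Z=\rho\inv(0)$ if and only if it is closed, and in that case the intersection is a single $K$-orbit. Since $V\git G$ parametrizes closed $G$-orbits, the inclusion $Z\hookrightarrow V$ descends to a continuous bijection $Z/K\to V\git G$. Both sides carry the quotient topology from compatible topologies on $V$, and the map is proper (this is the topological content of the Kempf--Ness quotient), hence a homeomorphism.

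For (2) and (3), I would use that the $K$-invariant Hermitian form on $V$ produces an antilinear isomorphism $V\tosim V^*$, and hence an antiholomorphic involution $\tau$ on $U$ whose fixed locus is a real subspace isomorphic to $V$ as a real $K$-module. Under this identification, the components of $\rho$ coincide, up to a factor of $i$, with the restrictions of the components of $\mu$, so $Z=N^\tau$, and the natural isomorphism $\R[V]\otimes_\R\C\simeq\C[U]$ carries $I_\rho\otimes_\R\C$ onto the ideal $I_\mu\subset\C[U]$ generated by the components of $\mu$. The $1$-large hypothesis, together with Lemma \ref{lem:smooth=principal}(2), gives that $N$ has rational singularities; in particular $N$ is reduced, so $I_\mu$ is the full vanishing ideal of $N$ in $\C[U]$. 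To conclude (2), I need $Z$ to be Zariski-dense in $N$, which follows because Kempf--Ness applied to $U$ as a $G$-module gives $G\cdot Z=N$, so $Z$ cannot be contained in any proper complex subvariety of the irreducible variety $N$ (irreducibility comes from $N$ being a normal cone). Part (3) then follows from the standard fact that a Zariski-dense real form of an irreducible complex variety has real dimension equal to the complex dimension; equivalently, one can compute directly using $\dim_\C N=2\dim_\C V-\dim_\C G=2\dim_\C V-\dim_\R K=\dim_\R Z$.

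The main obstacle is establishing Zariski-density of $Z$ in $N$ together with the equality $Z=N^\tau$. Both rely essentially on $1$-largeness: without it, $N$ need not be reduced, closed orbits need not be dense, and $I_\rho$ may fail to be a real radical ideal, in which case the quotient $\R[V]^K/I_\rho^K$ would not agree with the ring of real regular functions on $Z/K$. The conventions relating $\mu$ and $\rho$, involving the factor of $i$ and the identification of $\lie k^*$ with a real form of $\lie g^*$, are a routine but unavoidable bookkeeping task.
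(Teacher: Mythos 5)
Your treatment of part (1) coincides with the paper's, which simply identifies $Z$ with the Kempf--Ness set of $V$ and quotes \cite[Theorem 4.2]{GWSkempfNess}. For (2) and (3) the paper again only cites \cite[\S 4]{HerbigSchwarz}, so you are supplying an argument the paper leaves to the reference; its overall shape is the right one (identify $Z$ with $N^\tau$, show $Z$ is Zariski dense in the reduced irreducible variety $N$, then match $I_\rho$ with $I_\mu\cap\R[V_\R]$), and the steps ``$N$ reduced and irreducible from Lemma \ref{lem:smooth=principal}(2)'' and ``$(I_\rho\otimes_\R\C)\cap\R[V_\R]=I_\rho$'' are fine.

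However, your justification of Zariski density is wrong. The claim that Kempf--Ness gives $G\cdot Z=N$ is false. Every point of $G\cdot Z$ lies on a closed $G$-orbit, and already the closed-orbit points of $N$ are not exhausted by $G\cdot Z$: for $0\neq v\in Z$ the point $(v,0)\in N$ lies on the closed orbit $Gv\times\{0\}$, which meets $V_\R=\{(w,\bar w)\mid w\in V\}$ only at the origin. Worse, a dimension count shows $G\cdot Z$ need not even be dense in $N$ in the classical topology: $\dim_\R(G\cdot Z)\le\dim_\R V+\dim_\R G=2n+2\ell$ while $\dim_\R N=2(2n-\ell)=4n-2\ell$, so for $n>2\ell$ the saturation $G\cdot Z$ is a proper lower-dimensional subset. (And even if one had density of $G\cdot Z$, that would not by itself force the Zariski closure of $Z$ to be $G$-stable, hence equal to $N$.) The density you need is exactly Lemma \ref{lem:1-large2}, i.e.\ \cite[Cor.~4.2, Prop.~4.7]{HerbigSchwarz}, and its proof goes through the mechanism you relegate to an afterthought: since $V$ is $1$-large it has FPIG, so by part (1) the set $Z$ contains points whose isotropy algebra is zero; at such a point $d\rho$ is surjective, so $Z$ is there a smooth totally real submanifold of real dimension $2n-\ell=\dim_\C N$ sitting inside the irreducible variety $N$, whence its Zariski closure is $N$. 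Note this also shows that your ``direct computation'' of (3) is not free of charge: the inequality $\dim_\R Z\ge 2n-\ell$ can fail for a general real polynomial map (one real equation can cut out arbitrary codimension), and both it and the reverse inequality use $1$-largeness and $Z\subset N$ with $\dim_\C N=2n-\ell$. So the argument is repairable, but the step as written is a genuine gap, and the repaired version is essentially the cited proof.
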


\begin{proof}
The real shell $Z$ is the same thing as the Kempf-Ness set of $V$ \cite[Equations 4.5]{GWSkempfNess}. Then (1)   follows from Theorem 4.2   of loc.\ cit. Parts (2) and (3) follow from  \cite[\S 4]{HerbigSchwarz}.
\end{proof}

If $V$ is not $1$-large, then (2) and (3) fail. However, they become true for the same set $Z$ if we shrink $V$.

\begin{proposition}\label{prop:Kempf-Ness}
Let $K$ be real quasi-toral and $V$ a faithful unitary $K$-module which is not   $1$-large as $G=K_\C$-module. Then there is a unique $G$-submodule  $V'\subset V$ with the following properties.
\begin{enumerate}
\item Let $H=\Ker(G\to\GL(V'))$ and $L=\Ker(K\to\GL(V'))$. Then $V'$ is a stable faithful (hence $1$-large) $G'=G/H$-module and $G'$ is the complexification of $K'=K/L$.
\item The inclusion $V'\to V$ induces an isomorphism $\C[V]^G\tosim\C[V']^{G'}$.
\item $Z\subset V'$ is the real shell for the $K'$-action on $V'$ and $Z/K=Z/K'$ as sets.
\end{enumerate}
\end{proposition}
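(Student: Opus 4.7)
The plan is to construct $V'$ explicitly as the linear span of those $G^0$-weight spaces of $V$ that appear in the support of some $G$-invariant polynomial, and to verify the claimed properties using the stability criterion \cite[Lemma 2]{WehlauPopov} recalled at the start of \S 3 together with a rational polyhedral cone argument for the real moment map.

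First I would choose a basis $v_1,\dots,v_n$ of $V$ diagonalizing the torus $G^0$, with weights $\chi_1,\dots,\chi_n$; the finite group $F = G/G^0$ permutes the weight lines and hence acts on the index set $\{1,\dots,n\}$. Set
\[
J_0 = \{\, i : x_i \text{ appears in the support of some } G\text{-invariant polynomial on } V\,\},
\]
which is $F$-stable, and let $V' := \Span\{v_i : i \in J_0\}$, a $G$-submodule. With $H$, $L$, $G' = G/H$, $K' = K/L$ as in the statement, injectivity of $\C[V]^G \to \C[V']^{G'}$ is immediate because every $G$-invariant on $V$ already involves only the variables $x_i$ with $i \in J_0$, and surjectivity holds because any $G$-invariant on $V'$ extends to a $G$-invariant on $V$ by declaring it independent of the remaining variables. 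By construction every $i \in J_0$ appears in some invariant of $V'$, so by \cite[Lemma 2]{WehlauPopov} the $G'$-module $V'$ is stable, and Lemma \ref{lem:smooth=principal}(1) then yields that it is $1$-large. That $G' = (K')_\C$ follows from the general fact that the image in $\GL(V')$ of the complexification $G$ of $K$ is the complexification of the image of $K$.

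For (3), the crux is to show $Z \subset V'$. In the chosen diagonalization $\rho(v) = \sum_i |x_i|^2 d\chi_i$. If $v \in Z$ had some $x_j \neq 0$ with $j \notin J_0$, then the $d\chi_i$ admit a non-negative real linear relation with strictly positive $j$-coefficient; the cone of such relations in $\R^n$ is rational polyhedral, so it contains a non-negative integer relation $\sum a_i \chi_i = 0$ with $a_j > 0$. This gives a $G^0$-invariant monomial $m = \prod_i x_i^{a_i}$, and the $F$-orbit product $\prod_{\sigma \in F} \sigma \cdot m$ is a nonzero $G$-invariant polynomial (being both $G^0$-invariant and $F$-invariant) whose support contains the $F$-orbit of $j$, contradicting $j \notin J_0$. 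With $Z \subset V'$ in hand, since $L$ acts trivially on $V'$ each $d\chi_i$ for $i \in J_0$ annihilates $\lie l$, so $\rho|_{V'}$ factors through $(\lie k / \lie l)^* = (\lie k')^*$ and coincides with the $K'$-moment map on $V'$. Hence $Z$ is the real shell of $(V', K')$; the equality $Z/K = Z/K'$ is immediate since $L$ fixes $Z$ pointwise.

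For uniqueness, any $V''$ satisfying (1)--(3) contains $Z$, and for each $i \in J_0$ the construction above produces a point of $Z$ with nonzero $i$-th component (take $x_k = \sqrt{a_k}$ for an invariant supported at $i$), so $V'' \supseteq \Span_\C Z = V'$. If $V'' \supsetneq V'$ contained an extra weight $v_j$ with $j \notin J_0$, either $j$ would appear in some invariant of $V''$ --- contradicting (2) by giving an invariant not in the image of $\C[V]^G$ --- or it would not, in which case $V' \subsetneq V''$ would be a proper $G''$-submodule with the same invariants, contradicting stability via \cite[Lemma 2]{WehlauPopov}. I expect the main obstacle to be making the rational polyhedral cone step rigorous in the quasi-toral setting, in particular verifying that the $F$-orbit product of a $G^0$-invariant monomial indeed yields a genuine $G$-invariant polynomial whose support contains the prescribed index $j$.
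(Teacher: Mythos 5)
Your construction is correct in substance, but it takes a genuinely different route from the paper. The paper's proof is a two-line bootstrap: it invokes \cite[Lemma 5.1]{HerbigSchwarzSeaton2}, which establishes the proposition for the connected groups $K^0$ and $G^0$, and then observes that the uniqueness of $V'$ for the $G^0$-action forces $V'$ to be $G$-stable, whence the statements for $G$ and $K$ follow. You instead rebuild $V'$ explicitly from the weights and handle the disconnectedness head-on. Your two main worries are both fine: the $F$-orbit product $\prod_{\sigma\in F}\sigma\cdot m$ of a $G^0$-invariant monomial $m$ is a well-defined nonzero $G$-invariant (the coset ambiguity is absorbed because $m$ is $G^0$-invariant), and it is divisible by $x_j^{a_j}$, so $x_j$ genuinely occurs in its support; and the cone $\{c\in\R_{\ge0}^n : \sum_i c_i\chi_i=0\}$ is rational polyhedral, so if it meets $\{c_j>0\}$ then one of its rational generators does, which after clearing denominators gives the required integral relation. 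The one inaccuracy is your parenthetical claim that $F$ permutes the weight \emph{lines} $\C v_i$: when a $G^0$-weight occurs with multiplicity, $F$ only permutes the weight \emph{spaces} and may mix basis vectors within them. This does not damage the argument, because for a torus the condition ``$x_i$ occurs in some invariant'' depends only on the character $\chi_i$, so $J_0$ is a union of full weight classes and $V'=\bigoplus_{\chi\in C}V_\chi$ for an $F$-stable set $C$ of characters; but you should define $V'$ this way rather than as a span of basis lines. Two smaller points deserve a sentence each in a final write-up: the passage from ``every variable of $V'$ occurs in an invariant'' to stability via \cite[Lemma 2]{WehlauPopov} should route through $G^0$-invariant monomials supported in $J_0$ (which your $F$-orbit trick supplies), and the uniqueness argument should not presume a competing $V''$ is a coordinate subspace — the clean dichotomy is that either restriction $\C[V]^G\to\C[V'']^{G''}$ fails to be surjective, or $V'\subsetneq V''$ have the same invariants, contradicting stability of $V''$. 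The trade-off is clear: the paper's argument is shorter but opaque without \cite{HerbigSchwarzSeaton2}, while yours is self-contained and makes the identification $Z=\{v : \sum_i|x_i|^2\chi_i=0\}\subset V'$ explicit, which is also what powers the uniqueness step.
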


\begin{proof}
In \cite[Lemma 5.1]{HerbigSchwarzSeaton2} this result is established for $K$ replaced by $K^0$ (and $G$ replaced by $G^0$). Since $V'$ is unique (for the action of $G^0$), it is $G$-stable, so the
results of the proposition hold for $G$ and $K$.
\end{proof}

By the above, we can always reduce to the case that $V$ is $1$-large as $G$-module, so we assume this for the rest of this section.

Let $X$ be a complex affine variety. An anti-holomorphic involution $\omega$ on $X$ is called a \emph{real structure on $X$}. The \emph{real points of $X$} are the fixed points of $\omega$.
We recall the following from \cite[\S 4]{HerbigSchwarz}.
Let $J$ denote the complex structure on $V$ given by multiplication by $i$. Then the complexification $V_\C = V\otimes_\R \C$ decomposes into the
$\pm i$-eigenspaces $V_{\pm}$ of $J$. We have $V_+=\{v-iJv\mid v\in V\}$ and $V_-=\{v+iJv\mid v\in V\}$. Thus $V_\C=V_+\oplus V_-$ where $V_+\simeq V$ by the obvious mapping. Note that $V_-$ is isomorphic to $V$ with the conjugate complex structure given by $-J$. We have a nondegenerate pairing of $V_+$ and $V_-$ which sends $(v-iJv,v'+iJv')$ to $\langle v,v'\rangle$ where $\langle\, ,\rangle$ is the $K$-invariant hermitian form on $V$.
Thus $V_-\simeq V^*$. Let $(v_+,v'_-)\in V_+\oplus V_- = V_\C$ and let $\tau(v_+,v'_-)=(v'_+,v_-)$. The set of fixed points of $\tau$ is the set
$\{(v_+,v_-) \mid v\in V\}$; we denote by $V_\R$ this subspace of $V_\C$ isomorphic to $V$ (as a complex vector space).
It is easy to see that $\tau$ is anti-holomorphic, i.e., $\tau\circ J=-J\circ \tau$,
giving $V_\C\simeq V\oplus V^\ast$  a real structure.

We have real structures on $G$ and $\lieg$,
as follows.
Let $\liek$ denote the Lie algebra of $K$. We can decompose $\lieg$ as $\liek\oplus i\liek$. We have an anti-holomorphic involution $\sigma_*$ on $\lieg$ which sends $A+iB$ to $A-iB$ where $A$, $B\in\liek$. Using the exponential map  we have a decomposition $G=K \exp(i\liek)\simeq K\times\exp(i\liek)$ and the corresponding anti-holomorphic \emph{Cartan involution $\sigma$ of $G$}  sends $k\exp(iA)$ to $k\exp(-iA)$ for $k\in K$, $A\in\liek$. The fixed
points
 of $\sigma$ are $K$.

If  $\phi\colon N_1\to N_2$ is a morphism of two complex varieties $N_1$ and $N_2$ with real structures $\tau_1$ and $\tau_2$, we say that $\phi$ is a \emph{real morphism\/} if
$\phi\circ\tau_1=\tau_2\circ\phi$. We similarly define \emph{real isomorphisms} of complex affine varieties with real structures,
\emph{real\/} linear isomorphisms of complex vector spaces with real structures, and \emph{real\/} isomorphisms of reductive complex Lie groups with real structures (e.g., $K_\C$) to be isomorphisms that preserve the real structures.

The (complex) moment map $\mu\colon V\oplus V^*\to\lieg^*$ equals $\rho\otimes_\R \C$. It follows that $\mu\colon V\oplus V^\ast\to\lieg^*$ is real where $\lieg^*$ has the real structure given by the dual of $\sigma_*$.
Thus $\tau$ preserves the (complex) shell $N=\mu^{-1}(0)$.
The natural map  $Z\to V_\R\subset V_\C$ has image in the real points of $N$.
From \cite[Cor.\ 4.2  and Prop.\ 4.7]{HerbigSchwarz} we have:
\begin{lemma}\label{lem:1-large2}
Let $V$ be a unitary $K$-module where $K$ is an arbitrary compact Lie group, and let $G$, $Z$ and $N$ be as usual. Then
$Z$ is Zariski dense in $N$ if  and only if $V$ is $1$-large as $G$-module.
\end{lemma}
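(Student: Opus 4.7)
The plan is to establish the two implications separately, using Lemma \ref{lem:1-large}(2) for the forward direction and Proposition \ref{prop:Kempf-Ness} for the converse, with the standard identification between real and complex vanishing ideals as the bridge. The crucial elementary observation is that, via $\C[V_\C]\simeq\R[V_\R]\otimes_\R\C$, a complex polynomial $f=f_1+if_2$ vanishes on the real set $Z\subset V_\R$ if and only if both real polynomials $f_1$ and $f_2$ do; hence the complex vanishing ideal $I_\C(Z)\subset\C[V_\C]$ is exactly $I_\R(Z)\otimes_\R\C$, where $I_\R(Z)\subset\R[V_\R]$ is the real vanishing ideal of $Z$. I will also reduce to the case where $V$ is faithful, since $1$-largeness requires FPIG and hence faithfulness.

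For the direction that $V$ being $1$-large implies $Z$ is Zariski dense in $N$, I plan to invoke Lemma \ref{lem:1-large}(2) to get $I_\R(Z)=I_\rho$ and then tensor with $\C$ to conclude
\[
I_\C(Z)=I_\R(Z)\otimes_\R\C=I_\rho\otimes_\R\C=(\mu),
\]
the defining ideal of $N$ in $\C[V_\C]$. The Zariski closure of $Z$ in $V_\C$ is cut out by $I_\C(Z)$ and therefore equals $V((\mu))=N$.

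For the converse, I plan to contrapose: assume $V$ is not $1$-large and apply Proposition \ref{prop:Kempf-Ness} (or its analog for general compact $K$, essentially \cite[Prop.\ 4.7]{HerbigSchwarz}) to produce a proper $G$-submodule $V'\subsetneq V$ containing $Z$. Choosing a Hermitian-orthogonal $K$-invariant complement $V=V'\oplus V''$ with $V''\neq 0$, I will argue that the real embedding $V\hookrightarrow V_\R\hookrightarrow V_\C\simeq V\oplus V^*$ sends $v\mapsto(v,\langle\cdot,v\rangle)$, and that Hermitian orthogonality of $V'$ with $V''$ places the image of $V'$ inside the summand $V'_\C:=V'\oplus(V')^*$. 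Consequently the Zariski closure of $Z$ is contained in the proper complex linear subspace $V'_\C\subsetneq V_\C$. To finish, I will exhibit points of $N$ outside $V'_\C$: the subset $V''\times\{0\}\subset V\oplus V^*$ lies in $N$ because $\mu(v,0)=0$ identically, and it meets $V'_\C$ only at the origin since $V'\cap V''=\{0\}$; as $V''\neq 0$, this yields $N\not\subset V'_\C$, so $Z$ cannot be Zariski dense in $N$.

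The main subtlety will be the Hermitian-orthogonality step identifying the real embedding of $V'$ into $V_\C$ with the summand $V'\oplus(V')^*$; once this identification is clear, the containment of the Zariski closure of $Z$ in a proper linear subspace and the identification of points of $N$ outside that subspace are both immediate.
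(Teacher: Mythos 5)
The paper does not actually prove this lemma; it is quoted directly from \cite[Cor.~4.2 and Prop.~4.7]{HerbigSchwarz}, so any self-contained argument is by definition a different route. Your forward direction is sound: $\C[V_\C]\simeq\R[V_\R]\otimes_\R\C$, a complex polynomial vanishes on $Z$ iff its real and imaginary parts do, so $I_\C(Z)=I_\R(Z)\otimes_\R\C$, and Lemma \ref{lem:1-large}(2) turns this into $(\mu)=I_\rho\otimes_\R\C$, whose zero set is $N$. (Your parenthetical that $1$-largeness forces faithfulness is not quite right --- FPIG only forces the kernel to be finite --- but a finite kernel changes nothing, so the reduction stands.)

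The converse, however, has a genuine gap. The lemma is asserted for an \emph{arbitrary} compact $K$, while Proposition \ref{prop:Kempf-Ness} is stated (and proved, via \cite[Lemma 5.1]{HerbigSchwarzSeaton2}) only for quasi-toral $K$; in that setting faithful $+$ stable is equivalent to $1$-large, which is exactly what makes ``not $1$-large $\Rightarrow$ $Z$ lies in a proper submodule'' work. For general $K$ this implication is false, so your contrapositive strategy cannot be repaired by ``extending'' Proposition \ref{prop:Kempf-Ness}. Concretely, take $K=\SU(2)$, $G=\SL_2$, $V=2W$ with $W=\C^2$: this module is stable with trivial principal isotropy groups but fails $1$-modularity (the pairs of dependent vectors have codimension $1$), so it is not $1$-large; yet the union of closed orbits is Zariski dense in $V$ and $Z$ meets every closed orbit, so $Z$ is not contained in any proper $G$-submodule. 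Your argument produces nothing in this case, whereas the lemma still claims $Z$ is not dense in $N$ (here because $N$ is reducible --- cf.\ the reference to \cite{Becker} in Example \ref{ex:non-connected stratum} --- and $\overline{Z}$ misses components). So the converse genuinely requires the global dimension/irreducibility analysis of \cite[Prop.~4.7]{HerbigSchwarz}, which you cannot replace by the submodule reduction. Your argument as written does prove the lemma in the quasi-toral case, which is the only case used later in \S\ref{sec:real}, but not in the stated generality.
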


Let $v\in Z$, $L = K_v$, and $E = T_v(K v)$. The \emph{(real) symplectic slice at $v$} for the $K$-action on $Z$ is given by $E^\omega/E$ where
$E^\omega$ denotes the orthogonal complement with respect to the symplectic form $\omega$ induced by the hermitian metric on $V$.
The action of $L$ on the real symplectic slice is Hamiltonian with respect to the symplectic structure inherited from $V$, and the moment map of the
$L$-action is the restriction of $\rho$. See \cite[Section 6]{HerbigSchwarz} for more details. Note that the complexification of the real symplectic
slice is the (complex) symplectic slice for the $G$-action at $(v_+, v_-)\in N$.
Let $Z_v$ denote the real shell of the real symplectic slice at $v$.
We say that the $K$-module $V$ and real shell $Z$ are \emph{minimal} if $V$ has no slice representations of type $O$, or equivalently, if the complex shell $N$ is minimal.

By a \emph{regular isomorphism} or \emph{regular diffeomorphism} of real symplectic quotients, we mean a homeomorphism that induces an isomorphism of the algebras of real regular functions. A \emph{regular symplectomorphism} is a regular isomorphism such that the induced algebra isomorphism is Poisson. A regular isomorphism or symplectomorphism is \emph{graded} if the algebra isomorphism preserves the grading. See \cite[Section 2]{HerbigSchwarzSeaton} or \cite[Section 4.2]{FarHerSea}.

We have the following real analog of Theorem \ref{mainthm:6}.

 \begin{theorem}
 \label{mainthm:6.real}
Let $K$ be a real quasi-toral compact Lie group and $V$ a  faithful unitary $K$-module with real shell $Z$. We assume that $V$ is stable as $G=K_\C$-module. There is a complex linear subspace $V'\subset V$ with the following properties.
\begin{enumerate}
\item Let  $K'\subset K$ be the stabilizer of $V'$ with complexification $G'$. Then $V'$ is a stable
 faithful $G'$-module.
\item There is a $K'$-equivariant  inclusion $Z'\to Z$ inducing a
    graded regular symplectomorphism $Z'/K'\simeq Z/K$ where $Z'$ is the real shell of $V'$.
\item  $Z'$ is minimal.
\end{enumerate}
\end{theorem}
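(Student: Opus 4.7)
The plan is to apply the complex Theorem \ref{mainthm:6} to $(V,G)$ and show that all of the resulting data is compatible with the Cartan involution, hence descends to the real setting via the Kempf--Ness identifications of Lemmas \ref{lem:1-large} and \ref{lem:1-large2}. First, since $V$ is stable (equivalently $1$-large) as a $G$-module by \cite[Theorem 3.2]{HerbigSchwarz}, Theorem \ref{mainthm:6} produces a complex linear subspace $V'\subset V$, its complex stabilizer $G'\subset G$, and a $G'$-equivariant inclusion $N'=N_{V'}\hookrightarrow N$ inducing a Poisson isomorphism $N'\git G'\simeq N\git G$, with $V'$ a stable faithful $G'$-module and $N'$ minimal.

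Next, I would verify compatibility with the Cartan involution $\sigma$ on $G$ and the real structure $\tau$ on $V\oplus V^*$. Since $V'$ is a complex subspace of $V$, the subspace $V'\oplus (V')^*\subset V\oplus V^*$ is automatically $\tau$-stable. It follows that if $g\in G'$ then $\sigma(g)$ also stabilizes $V'$, so $G'$ is $\sigma$-stable. Setting $K':=G'\cap K$, the group $K'$ is a compact real form of $G'$, so $G'=(K')_\C$, establishing (1).

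To obtain (2), I would restrict to $\tau$-fixed points. Because $\mu=\rho\otimes_\R\C$, the $\tau$-fixed locus of $N$ is the real shell $Z$, and similarly for $N'$. The inclusion $N'\hookrightarrow N$ thus restricts to a $K'$-equivariant inclusion $Z'\hookrightarrow Z$. The Poisson isomorphism $\C[N\git G]\simeq\C[N'\git G']$ is given by pullback along this inclusion and so commutes with $\tau$; by Lemma \ref{lem:1-large}(2), applied to both $(V,G,K)$ and $(V',G',K')$ (using the stability of $V'$), it restricts on real parts to an isomorphism $\R[Z/K]\simeq\R[Z'/K']$. This isomorphism is graded because polynomial restriction preserves degree, and is a symplectomorphism because the real Poisson bracket is the restriction of the complex one to $\tau$-invariant functions. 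Finally (3) is immediate since minimality of $Z'$ is defined via minimality of $N'$.

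The main obstacle is ensuring that the $V'$ produced by the inductive construction of Theorem \ref{thm:main2}, together with the extension via Lemma \ref{lem:lifting} for the component group $G/G^0$, genuinely yields a $\tau$-stable decomposition $V'\oplus (V')^*$, and that the Poisson isomorphism at the level of real-valued functions really is a symplectomorphism. Inspecting the construction, the building blocks $W_D$ (spanned by the real vector $v_0=(\sqrt{m_1},\dots,\sqrt{m_k})$) and $V^H$ are manifestly $\tau$-stable, and Corollary \ref{cor:reduce.N_0} shows the reduction map is a symplectomorphism up to a positive rescaling of the standard symplectic form on $W_D\oplus W_D^*$, which is harmless for a graded regular symplectomorphism.
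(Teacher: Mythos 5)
Your overall strategy is the same as the paper's: apply Theorem \ref{mainthm:6} to $(V,G)$, check that $V'\oplus(V')^*$, $G'$ and the inclusion $N'\to N$ are compatible with the real structure $\tau$ and the Cartan involution, and then restrict to the $\tau$-fixed points to get $Z'\to Z$ and the graded regular symplectomorphism. The compatibility checks you perform (including the observation that the building blocks $W_D=\C v_0$ and $V^H$ of the construction in Theorem \ref{thm:main2} are manifestly $\tau$-stable) are correct and, if anything, more explicit than what the paper writes down.

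There is, however, one genuine omission. A \emph{regular symplectomorphism} is by definition a \emph{homeomorphism} $Z'/K'\to Z/K$ inducing a Poisson isomorphism of the algebras of real regular functions. You produce the continuous map $Z'/K'\to Z/K$ (from the inclusion $Z'\to Z$) and you produce the algebra isomorphism $\R[Z/K]\simeq\R[Z'/K']$ via Lemma \ref{lem:1-large}(2), but you never show that the map of quotient spaces is surjective, i.e., that every $K$-orbit in $Z$ actually meets $Z'$. This is not a formal consequence of the algebra isomorphism: the real symplectic quotient is only a semialgebraic subset of the real points of the complex quotient, and the paper's closing example exhibits two quotients with isomorphic real coordinate rings whose semialgebraic quotients do \emph{not} correspond under any algebraic isomorphism. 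The paper closes this gap by combining the Kempf--Ness homeomorphisms $Z/K\tosim V\git G$ and $Z'/K'\tosim V'\git G'$ of Lemma \ref{lem:1-large}(1) with Corollary \ref{cor:V'/G'.to.V/G} (the inclusion $V'\to V$ induces $V'\git G'\tosim V\git G$), which identifies your map $Z'/K'\to Z/K$ with an isomorphism of affine quotients and hence shows it is a homeomorphism. You should add this step; with it, your argument is complete.
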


  \begin{proof}
  Theorem \ref{thm:main3} yields a
  faithful stable (and hence $1$-large) minimal $G'$-module $V'$ and $G'$-equivariant inclusion $N' = N_{V'}\to N$ inducing an isomorphism $N'\git G'\tosim N\git G$.
  Consider  $V$ as a $K$-module and $V'$ as a
  $(K'=G'\cap K)$-module. Then  $V\otimes_\R\C\simeq V\oplus V^*$ as $K$-module and similarly for $V'$ and $K'$. We have a real linear embedding $V'\otimes_\R\C\to V\otimes_\R\C$
 and on the real points we have the embedding of $V'_\R\to V_\R$.
 Then $N\cap V_\R=Z$ and $N'\cap V'_\R=Z'$.
 The real $G'$-equivariant embedding $N'\to N$ induces a
 $K'$-equivariant
 embedding $Z'\to Z$ and quotient mapping $Z'/K'\to Z/K$.

   By Lemma \ref{lem:1-large}, the inclusions $Z\to V$ and $Z'\to V'$ induce     homeomorphisms $Z/K\tosim V\git G$ and $Z'/K'\tosim V'\git G'$.  It now follows from Corollary \ref{cor:V'/G'.to.V/G} that  $Z'/K'\to Z/K$ is a homeomorphism.
 Since
 $Z'\to Z$ is induced by a linear map, the corresponding isomorphism of $\R[Z']^{K'}$ and $\R[Z]^K$ is graded and $Z'/K'\to Z/K$ is a graded regular symplectomorphism.
 \end{proof}

We have the following real version of Corollary \ref{maincor:1}.
\begin{corollary}
Let $V$ be a unitary $K$-module where $K$ is
real
quasi-toral and we suppose that $(V,G=K_\C)$ is $1$-large and that $Z$ is minimal. Then $Z/K$ is an orbifold if and only if $K$ is finite.
\end{corollary}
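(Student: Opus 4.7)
The plan is to reduce the real statement to the complex one by tensoring invariants with $\C$, and then to invoke Corollary \ref{maincor:1}. The forward direction is immediate: if $K$ is finite then $\lie k=0$, hence $\rho\equiv 0$ and $Z=V$, so $Z/K=V/K$ is a real orbifold in the linear sense. All the content is in the converse.

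For the converse, suppose $Z/K$ is an orbifold, meaning $Z/K$ is graded regularly (symplecto)morphic to $Z_1/K_1$ for some unitary $K_1$-module $V_1$ with $K_1$ finite. Since $K_1$ is finite, the associated moment map vanishes, so $Z_1=V_1$ and $G_1:=(K_1)_\C=K_1$; in particular the complex shell $N_1$ is just $V_1\oplus V_1^*$ and $N_1\git G_1=(V_1\oplus V_1^*)/G_1$. The real symplectomorphism gives a graded $\R$-algebra (Poisson) isomorphism $\R[Z]^K\simeq\R[Z_1]^{K_1}$. I will argue that tensoring with $\C$ produces an isomorphism of complex algebras $\C[N]^G\simeq\C[N_1]^{G_1}$, so that $N\git G\simeq N_1\git G_1$ as affine (Poisson) varieties.

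The key identification is $\R[Z]^K\otimes_\R\C\simeq\C[N]^G$. Because $(V,G)$ is $1$-large, Lemma \ref{lem:1-large}(2) gives $\R[Z]^K=\R[V]^K/I_\rho^K$. Since $K$ is compact, taking $K$-invariants commutes with the extension of scalars $\R\to\C$. Moreover, the complexification of $V$ (as a real vector space) is $V_\C\simeq V\oplus V^*$ with the $K$-action extending to $G=K_\C$, so $\R[V]^K\otimes_\R\C=\C[V\oplus V^*]^G$; and $\mu$ is the $\C$-linear extension of $\rho$, so $I_\rho^K\otimes_\R\C=I_\mu^G$. Putting these together yields $\R[Z]^K\otimes_\R\C=\C[N]^G$. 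The same identification (trivially) holds on the $K_1$ side. Thus the complexified isomorphism realizes $N\git G\simeq N_1\git G_1$ with $G_1$ finite, so $N\git G$ is an algebraic orbifold in the sense of the paper.

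Since $Z$ is minimal, $N$ is minimal by definition. Corollary \ref{maincor:1} then forces $G$ to be finite, and hence $K=G\cap\{\text{unitary}\}$ is finite. The only step that requires care is the algebraic identification $\R[Z]^K\otimes_\R\C\simeq\C[N]^G$, which rests crucially on $1$-largeness through Lemma \ref{lem:1-large}(2) (without it, the ideal $I_\rho$ need not cut out all of $Z$ scheme-theoretically and the tensor step would fail); once this is in hand the argument is a direct transcription of the complex orbifold criterion to the real setting.
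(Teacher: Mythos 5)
Your proof is correct and follows the route the paper intends: the corollary is stated without proof precisely because it reduces, via the identification $\R[Z]^K\otimes_\R\C\simeq\C[N]^G$ (valid by $1$-largeness, exactly as used in the proof of Theorem \ref{thm:isomorphism.real}), to the complex criterion of Corollary \ref{maincor:1}. Your attention to where $1$-largeness enters (Lemma \ref{lem:1-large}(2) and the Zariski density of $Z$ in $N$) is exactly the right point to flag.
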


 \begin{remark}
In  \cite[Theorems 1.1, 1.3, 1.5]{HerbigSchwarzSeaton} we give criteria for $Z/K$ \emph{not} to be an orbifold.  Here the image of $K$ in $\GL(V)$ is positive dimensional and we may assume that $K\to\GL(V)$ is faithful.
We now assume that  $K$ is
real quasi-toral so that $G=K_\C$ is quasi-toral.  We show that the associated $N=N_V$ is minimal, recovering the  cited theorems of \cite{HerbigSchwarzSeaton}
in the real quasi-toral case.

 In Theorem 1.5, $G$ is $\C^\times$ and $(V\oplus V^*,G)$ is not of type $O$, hence no symplectic slice representation of $N$ is either and $N$ is minimal.  In Theorems 1.1 and 1.3, $V$ is $2$-principal and stable, hence
 $2$-large \cite[Theorem 3.2]{HerbigSchwarz} and \cite[Proposition 10.1]{GWSlifting}.
 Then
 $\dim V - \dim V^G-\dim G \geq 2$
 \cite[Lemma 2.2]{HerbigSchwarzSeaton2}, and the same holds for each of the slice representations of $V$
 \cite[Remark 9.6(2)]{GWSlifting}.
 By Lemma \ref{lem:H-type-O}, $N$ has no slice representations of type O,
hence   $N$ is  minimal.
 \end{remark}

We have the following real analog of Theorem \ref{mainthm:3}.

\begin{theorem}\label{thm:isomorphism.real}
For $i=1$, $2$, suppose that $K_i$ is a
real
quasi-toral compact Lie group with complexification $(K_i)_\C = G_i$ and $V_i$ is a faithful unitary $K_i$-module which is stable as a $G_i$-module. Let $Z_i$ denote the real shell and $N_i$
 the complex shell, and assume that the $N_i$ are minimal.
The following are equivalent.
\begin{enumerate}
\item There is a regular isomorphism $\phi\colon Z_1/K_1\to Z_2/K_2$.
\item There is a real  isomorphism  $\Phi\colon N_1\git G_1\to N_2\git G_2$.
\item There is a real linear isomorphism
$$
\Gamma\colon V_1\oplus V_1^*\to V_2\oplus V_2^*
$$
inducing (necessarily real)  isomorphisms of $N_1$ and $N_2$ and of $G_1$ and $G_2$.
\item There is a linear isomorphism
$$
\Gamma'\colon V_1\to V_2
$$
inducing   isomorphisms $Z_1\tosim Z_2$ and $K_1\tosim K_2$.
\end{enumerate}
\end{theorem}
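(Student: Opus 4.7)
The plan is to prove the equivalence of (1)--(4) via the cycle $(4)\Rightarrow(3)\Rightarrow(2)\Rightarrow(1)\Rightarrow(4)$. The three forward implications are routine consequences of complexification and restriction to real points; the implication $(1)\Rightarrow(4)$ is the substantive step, relying on Theorem \ref{thm:isomorphism}. The principal obstacle will be arranging the linear isomorphism produced by Theorem \ref{thm:isomorphism} to respect the real structures $\tau_i$ and then to restrict to a $\C$-linear map $V_1\to V_2$ on real points.

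For $(4)\Rightarrow(3)$, I complexify $\Gamma'$: the map $\Gamma=\Gamma'\oplus((\Gamma')^*)^{-1}$ is $\C$-linear, commutes with $\tau_i$, intertwines the $K_i$-actions (hence the $G_i$-actions by $\C$-linearity), and sends $Z_1$ to $Z_2$. By Lemma \ref{lem:1-large2}, $Z_i$ is Zariski dense in $N_i$, so $\Gamma(N_1)=N_2$. The step $(3)\Rightarrow(2)$ is immediate. For $(2)\Rightarrow(1)$, the identification $\R[V_i]^{K_i}\otimes_\R\C\simeq\C[V_i\oplus V_i^*]^{G_i}$ (arising from viewing $V_i\oplus V_i^*$ as the complexification of $V_i$ as a real vector space) descends, by taking quotients by the moment-map ideals and using $1$-largeness, to $\R[Z_i/K_i]\otimes_\R\C\simeq\C[N_i\git G_i]$; thus $\R[Z_i/K_i]$ is the $\tau$-invariant subalgebra, and a real iso $M_1\simeq M_2$ restricts to a regular iso $Z_1/K_1\simeq Z_2/K_2$.

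For $(1)\Rightarrow(4)$, I first complexify to obtain a real iso $\psi\colon M_1\to M_2$ (the reverse of $(2)\Rightarrow(1)$), then apply Theorem \ref{thm:isomorphism} to obtain $\Gamma\colon V_1\oplus V_1^*\to V_2\oplus V_2^*$. The key task is to choose $\Gamma$ real. Revisiting the proof of Theorem \ref{thm:isomorphism}: form $\phi(m_1,m_2)=(\psi^{-1}(m_2),\psi(m_1))$ on $M=M_1\times M_2$ (real since $\psi$ is), take $\phi_0=\lim_{t\to 0}\phi_t$ along $t\in\R^\times$ (real, since real scaling commutes with $\tau$), and set $\Gamma=\Phi'(0)$ for a lift $\Phi$ of $\phi_0$. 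Producing a real lift $\Phi$ is the main difficulty: given any lift $\Phi$, the composition $\tau_N\circ\Phi\circ\tau_N$ is another lift of $\phi_0$, and by the rigidity in the proof of Proposition \ref{prop:lifting} (using $\C[N]^*=\C^\times$ from Proposition \ref{prop:good}(6)) two lifts of a given map differ by a single element of $G$; an averaging argument exploiting the Cartan decomposition $G=K\exp(i\liek)$ then produces a $\tau$-equivariant lift.

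To conclude $(3)\Rightarrow(4)$ from a real $\Gamma$: since $\Phi$ is $\tau$-equivariant, the associated $\sigma\in\Aut(G)$ commutes with the Cartan involution, so $\sigma(K_1)=K_2$ and $\Gamma$ intertwines the $K_i$-actions. The complex structure $J_i$ on $V_i$ commutes with the $K_i$-action (since $V_i$ is unitary), and the $K_i$-invariant decomposition $V_i\oplus V_i^*\simeq V_{i,+}\oplus V_{i,-}$ is characterized as the $\pm i$-eigenspace decomposition of $J_i$. A final compatibility check on the construction of $\Gamma$ (using that $\Phi$ interchanges $N_1\times\{0\}$ and $\{0\}\times N_2$ and that the $\C^\times$-scaling on $N$ is compatible with $J$) shows $\Gamma$ respects the splitting, so restricting $\Gamma$ to the fixed points $V_{i,\R}\simeq V_{i,+}\simeq V_i$ yields a $\C$-linear $\Gamma'\colon V_1\to V_2$ inducing $Z_1\to Z_2$ and $K_1\to K_2$.
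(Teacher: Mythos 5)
Your architecture is the paper's traversed in reverse: the paper proves $(1)\Rightarrow(2)\Rightarrow(3)\Rightarrow(4)\Rightarrow(1)$, with the only substantive inputs being Theorem \ref{thm:isomorphism} and the identification $\R[Z_i]^{K_i}\otimes_\R\C=\C[N_i]^{G_i}$ coming from Zariski density of $Z_i$ in $N_i$ (Lemmas \ref{lem:1-large} and \ref{lem:1-large2}); you use exactly these inputs. Your discussion of why the $\Gamma$ produced by Theorem \ref{thm:isomorphism} can be taken real (real scaling limit, real lift, rigidity of lifts modulo $G$ via $\C[N]^*=\C^\times$) is in fact more detailed than the paper's, which simply asserts that $\Gamma$ ``has to induce an isomorphism of $(V_1)_\R$ to $(V_2)_\R$''; that extra care is a genuine improvement.

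However, your step $(2)\Rightarrow(1)$ has a gap, and it is load-bearing in your cycle. A real isomorphism $M_1\to M_2$ gives an isomorphism of the $\tau$-invariant subalgebras $\R[Z_2/K_2]\to\R[Z_1/K_1]$ and a homeomorphism of the sets of real points of the $M_i$; but condition (1) requires, by the definition of regular isomorphism, a \emph{homeomorphism of $Z_1/K_1$ onto $Z_2/K_2$}, and $Z_i/K_i$ is in general only a proper semialgebraic subset of the real points of $M_i$ (Procesi--Schwarz). Nothing in your argument shows that the homeomorphism of real points carries $Z_1/K_1$ onto $Z_2/K_2$ --- the closing example of \S \ref{sec:real} is precisely a situation where isomorphisms of the real loci exist but none restricts in this way. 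The paper obtains (1) only from (4): the linear map $\Gamma'$ carries the subset $Z_1\subset V_1$ onto $Z_2\subset V_2$, and that is what produces the required homeomorphism of quotients. The fix is to reorganize your cycle as $(1)\Rightarrow(2)\Rightarrow(3)\Rightarrow(4)\Rightarrow(1)$: you already prove $(1)\Rightarrow(2)$ and, in substance, $(2)\Rightarrow(3)\Rightarrow(4)$ inside your ``$(1)\Rightarrow(4)$'' paragraphs, and $(4)\Rightarrow(1)$ is immediate since $\Gamma'$ intertwines $K_1$ and $K_2$ and maps $Z_1$ homeomorphically onto $Z_2$. (A smaller slip: in $(4)\Rightarrow(3)$ the map $\Gamma'\oplus((\Gamma')^*)^{-1}$ commutes with the $\tau_i$ only when $\Gamma'$ is compatible with the hermitian forms; the correct real map is the complexification $\Gamma'\otimes_\R\C$ under $V_i\otimes_\R\C\simeq V_i\oplus V_i^*$.)
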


\begin{proof}
The real shell $Z_i$ is Zariski dense in the complex shell $N_i$ for $i=1$, $2$.
Hence $\R[Z_i]\otimes_\R\C=\C[N_i]$ and
$\R[Z_i/K_i]\otimes_\R\C = \R[Z_i]^{K_i}\otimes_\R\C=\C[N_i]^{G_i}$ for $i=1$, $2$. Thus
$(1) \Longrightarrow (2)$ and $(2)\Longrightarrow (3)$ since the $\Gamma$ given by Theorem \ref{thm:isomorphism}   has to induce an isomorphism of $(V_1)_\R$ to $(V_2)_\R$. Clearly  $(4)\Longrightarrow (1)$.
Suppose that (3) holds.
Then $\Gamma$ restricts to a linear isomorphism
of
$(V_1)_\R$ onto $(V_2)_\R$
and hence induces a linear isomorphism $\Gamma'\colon V_1\to V_2$.
Since the real points of the $G_i$ are the $K_i$ for $i=1$, $2$, $\Gamma'$ sends $K_1$ to $K_2$ and $Z_1$ to $Z_2$ giving (4).
\end{proof}

If Theorem \ref{thm:isomorphism.real}(4) holds, then as the induced regular isomorphism $Z_1/K_1\to Z_2/K_2$ of (1) is constructed from a linear
isomorphism $V_1\to V_2$, it preserves the grading and Poisson structures of $\R[Z_1/K_1]$ and $\R[Z_2/K_2]$. Hence we have the following.

\begin{corollary}\label{cor:graded.regular}
With $K_i$, $V_i$, etc. as in Theorem \ref{thm:isomorphism.real}, if there is a regular isomorphism $Z_1/K_1\to Z_2/K_2$, then there is
a graded regular symplectomorphism $Z_1/K_1\to Z_2/K_2$.
\end{corollary}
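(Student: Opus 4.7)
My plan is to deduce the corollary directly from Theorem \ref{thm:isomorphism.real}, following the outline in the remark immediately preceding the corollary. Given a regular isomorphism $\phi\colon Z_1/K_1\to Z_2/K_2$, this is condition (1) of that theorem, so the implication $(1)\Rightarrow(4)$ yields a linear isomorphism $\Gamma'\colon V_1\to V_2$ sending $Z_1$ onto $Z_2$ and intertwining $K_1$ with $K_2$. Let $\tilde\phi\colon Z_1/K_1\to Z_2/K_2$ denote the map on quotients induced by $\Gamma'$; in general $\tilde\phi$ will differ from the given $\phi$, and I would verify that $\tilde\phi$ itself is a graded regular symplectomorphism, which is all that is claimed.

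The graded part is transparent. The linearity of $\Gamma'$ makes the pullback $(\Gamma')^*\colon \R[V_2]\to\R[V_1]$ send homogeneous polynomials of degree $d$ to homogeneous polynomials of degree $d$. Passing to $K_i$-invariants and then to the moment-map quotients gives a degree-preserving algebra isomorphism
\[
    \R[Z_2/K_2]=\R[V_2]^{K_2}/I_{\rho_2}^{K_2}\tosim\R[V_1]^{K_1}/I_{\rho_1}^{K_1}=\R[Z_1/K_1].
\]

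The symplectomorphism (Poisson) part is the main obstacle, and is the step whose verification requires some care. My approach would be to use the fact that $\Gamma'$ arises as the restriction to real points $(V_i)_\R$ of the complex linear isomorphism $\Gamma\colon V_1\oplus V_1^*\to V_2\oplus V_2^*$ furnished by Theorem \ref{thm:isomorphism.real}(3). The $\sigma$-equivariance of $\Gamma$ together with the condition $\Gamma(N_1)=N_2$ forces $\Gamma$ to intertwine the complex moment maps $\mu_1$ and $\mu_2$ (up to the Lie algebra isomorphism $\sigma_\ast$), and this in turn constrains $\Gamma$ to respect the canonical symplectic pairing on $V_i\oplus V_i^*$ sufficiently tightly, in conjunction with $K$-equivariance, to induce a Poisson map upon taking invariants. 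Restricting to the real Lagrangian $(V_i)_\R\simeq V_i$ recovers the real symplectic form $\omega_i=\Imaginary\langle\,,\rangle_i$, so the compatibility at the complex level translates to a Poisson-preserving pullback on $\R[Z_i/K_i]$. The delicate point to pin down carefully is that moment-map compatibility, rather than honest symplecticity of $\Gamma$, is already enough to ensure Poisson-preservation on the invariants modulo $I_{\rho_i}^{K_i}$; one expects this to follow by writing out the Poisson brackets of invariants explicitly in terms of the moment map condition and exploiting the isotypic decomposition of the $K_i$-action.
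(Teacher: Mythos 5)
Your overall strategy---apply the implication $(1)\Rightarrow(4)$ of Theorem \ref{thm:isomorphism.real} to obtain a linear equivariant isomorphism $\Gamma'\colon V_1\to V_2$ carrying $Z_1$ to $Z_2$, observe that linearity gives the grading, and then argue for the Poisson property---is the same as the paper's, which disposes of the corollary in the sentence preceding its statement. The gap is in your justification of the Poisson step. The claim that moment-map compatibility of $\Gamma$, rather than honest symplecticity, already forces Poisson-preservation on the invariants modulo $I_{\rho_i}^{K_i}$ is false. Take $K_1=K_2=K$ finite, $V_1=V_2=V$ faithful (so the hypotheses of the theorem hold vacuously: $\rho=0$, $Z=V$, $N=V\oplus V^*$ is smooth), and $\Gamma'=\lambda\,\id_V$ with $\lambda\in\R$, $\lambda^2\neq 1$. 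This map is equivariant, sends $Z$ to $Z$, and satisfies every moment-map compatibility one could ask for, yet $(\Gamma')^*$ rescales the Poisson bracket on $\R[V]^K$ by $\lambda^{-2}$ and is not Poisson. So the particular linear isomorphism handed to you by the theorem need not induce a symplectomorphism, and no amount of unwinding the moment-map condition will change that; some genuine input from the hermitian (equivalently, symplectic) structure is required.

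The repair is short and avoids the complex moment maps entirely; the corollary only asserts \emph{existence}, so you are free to replace $\Gamma'$. The existence of $\Gamma'$ shows that, after identifying $K_1$ with $K_2$ via the induced group isomorphism, $V_1$ and $V_2$ are isomorphic as $K$-modules; since both are unitary modules of a compact group, the polar decomposition of $\Gamma'$ (taken with respect to the two invariant hermitian forms) produces a $K$-equivariant \emph{unitary} isomorphism $U\colon V_1\to V_2$. Being unitary, $U$ preserves $\omega=\Imaginary\langle\,,\rangle$ and intertwines $\rho_1$ with $\rho_2$ exactly, hence carries $Z_1$ onto $Z_2$ and induces an isomorphism $\R[V_2]^{K_2}/I_{\rho_2}^{K_2}\tosim\R[V_1]^{K_1}/I_{\rho_1}^{K_1}$ that is degree-preserving (by linearity) and Poisson (by symplecticity). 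That is the required graded regular symplectomorphism.
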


Unlike the complex case addressed in Theorems \ref{mainthm:6} and \ref{mainthm:3}, the assumption of stability is crucial in Theorems \ref{mainthm:6.real} and \ref{thm:isomorphism.real}.
In the real case, changing the Lagrangian submodule can significantly change the real shell $Z$ and real symplectic quotient $Z/K$,
see \cite[Example 2.10]{HerbigSchwarzSeaton3}.

\begin{example}
(\cite[Proposition 1]{HerbigLawlerSeaton20})
Let $K= \Sp^1$ with complexification $G=\C^\times$. Let  $V_1$ and $V_2$ be $K$-modules with weights   $(-2,3,6)$ and $(-3, 2, 6)$, respectively.  Let $N_1$, $N_2$, $Z_1$ and $Z_2$ be as above where $K_1=K_2=K$ and $G_1=G_2=G$. Let $Y_1$ (resp.\ $Y_2$) denote the real points of $N_1\git G$ (resp.\ $N_2\git G$). Theorem \ref{thm:isomorphism.real}(4)
fails to hold by \cite[Proposition 1]{HerbigLawlerSeaton20}.   However,  $N_1\git G\simeq N_2\git G$ and
$$
\R[Y_1]=\R[Z_1]^K\simeq\R[Z_2]^K=\R[Y_2].
$$
By  \cite{ProcesiSchwarz}, $Z_1/K$ and $Z_2/K$ are semialgebraic subsets of $Y_1$ and $Y_2$, respectively. By Theorem \ref{thm:isomorphism.real}, no   algebraic isomorphisms of $Y_1$ and $Y_2$ can send $Z_1/K$ isomorphically onto $Z_2/K$.
\end{example}

\bibliographystyle{amsalpha}
\bibliography{HSS-torus}

\def\cprime{$'$}
\providecommand{\bysame}{\leavevmode\hbox to3em{\hrulefill}\thinspace}
\providecommand{\MR}{\relax\ifhmode\unskip\space\fi MR }
\providecommand{\MRhref}[2]{%
  \href{http://www.ams.org/mathscinet-getitem?mr=#1}{#2}
}
\providecommand{\href}[2]{#2}
\begin{thebibliography}{Weh92}

\bibitem[Bec10]{Becker}
Tanja Becker, \emph{On the existence of symplectic resolutions of symplectic
  reductions}, Math. Z. \textbf{265} (2010), no.~2, 343--363.

\bibitem[Bou87]{Boutot}
Jean-Fran{\c{c}}ois Boutot, \emph{Singularit\'es rationnelles et quotients par
  les groupes r\'eductifs}, Invent. Math. \textbf{88} (1987), no.~1, 65--68.

\bibitem[Bul18]{Bulois}
Micha\"{e}l Bulois, \emph{On the normality of the null-fiber of the moment map
  for {$\theta$}- and tori representations}, J. Algebra \textbf{507} (2018),
  502--524.

\bibitem[FHS13]{FarHerSea}
Carla Farsi, Hans-Christian Herbig, and Christopher Seaton, \emph{On orbifold
  criteria for symplectic toric quotients}, SIGMA Symmetry Integrability Geom.
  Methods Appl. \textbf{9} (2013), Paper 032, 18.

\bibitem[FL81]{FL}
William Fulton and Robert Lazarsfeld, \emph{Connectivity and its applications
  in algebraic geometry}, Algebraic geometry ({C}hicago, {I}ll., 1980), Lecture
  Notes in Math., vol. 862, Springer, Berlin-New York, 1981, pp.~26--92.

\bibitem[GM88]{GM}
Mark Goresky and Robert MacPherson, \emph{Stratified {M}orse theory},
  Ergebnisse der Mathematik und ihrer Grenzgebiete (3) [Results in Mathematics
  and Related Areas (3)], vol.~14, Springer-Verlag, Berlin, 1988.

\bibitem[HLS20]{HerbigLawlerSeaton20}
Hans-Christian Herbig, Ethan Lawler, and Christopher Seaton, \emph{Constructing
  symplectomorphisms between symplectic torus quotients}, Beitr. Algebra Geom.
  \textbf{61} (2020), no.~4, 581--604.

\bibitem[HS13]{HerbigSchwarz}
Hans-Christian Herbig and Gerald~W. Schwarz, \emph{The {K}oszul complex of a
  moment map}, J. Symplectic Geom. \textbf{11} (2013), no.~3, 497--508.

\bibitem[HS15]{HerbigSeaton2}
Hans-Christian Herbig and Christopher Seaton, \emph{An impossibility theorem
  for linear symplectic circle quotients}, Rep. Math. Phys. \textbf{75} (2015),
  no.~3, 303--331.

\bibitem[HSS15]{HerbigSchwarzSeaton}
Hans-Christian Herbig, Gerald~W. Schwarz, and Christopher Seaton, \emph{When is
  a symplectic quotient an orbifold?}, Adv. Math. \textbf{280} (2015),
  208--224.

\bibitem[HSS20]{HerbigSchwarzSeaton2}
\bysame, \emph{Symplectic quotients have symplectic singularities}, Compos.
  Math. (2020), no.~3, 613--646.

\bibitem[HSS21]{HerbigSchwarzSeaton3}
\bysame, \emph{When does the zero fiber of the moment map have rational
  singularities?}, to appear in Geom. Topol. (2021),
  \texttt{arxiv.org/abs/2108.07306 [math.AG]}.

\bibitem[Kut11]{Kuttler}
J.~Kuttler, \emph{Lifting automorphisms of generalized adjoint quotients},
  Transformation Groups \textbf{16} (2011), 1115--1135.

\bibitem[LR79]{LunaRichardson}
D.~Luna and R.~W. Richardson, \emph{A generalization of the {C}hevalley
  restriction theorem}, Duke Math. J. \textbf{46} (1979), no.~3, 487--496.

\bibitem[Lun73]{LunaSlice}
Domingo Luna, \emph{Slices \'etales}, Sur les groupes alg\'ebriques, Soc. Math.
  France, Paris, 1973, pp.~81--105. Bull. Soc. Math. France, Paris, M\'emoire
  33.

\bibitem[PS85]{ProcesiSchwarz}
Claudio Procesi and Gerald Schwarz, \emph{Inequalities defining orbit spaces},
  Invent. Math. \textbf{81} (1985), no.~3, 539--554.

\bibitem[Sch80]{GWSliftingHomotopies}
Gerald~W. Schwarz, \emph{Lifting smooth homotopies of orbit spaces}, Inst.
  Hautes \'Etudes Sci. Publ. Math. (1980), no.~51, 37--135.

\bibitem[Sch89]{GWSkempfNess}
\bysame, \emph{The topology of algebraic quotients}, Topological methods in
  algebraic transformation groups ({N}ew {B}runswick, {NJ}, 1988), Progr.
  Math., vol.~80, Birkh\"auser Boston, Boston, MA, 1989, pp.~135--151.

\bibitem[Sch95]{GWSlifting}
\bysame, \emph{Lifting differential operators from orbit spaces}, Ann. Sci.
  \'Ecole Norm. Sup. (4) \textbf{28} (1995), no.~3, 253--305.

\bibitem[Sch13]{SchVectorFields}
\bysame, \emph{Vector fields and {L}una strata}, J. Pure and Applied Algebra
  \textbf{217} (2013), 54--58.

\bibitem[Sch14]{GWSQuotients}
\bysame, \emph{Quotients, automorphisms and differential operators}, J. Lond.
  Math. Soc. (2) \textbf{89} (2014), no.~1, 169--193.

\bibitem[Sta]{Starr}
Jason Starr, \emph{The homology groups of the smooth locus of a singular
  variety}, \texttt{https://mathoverflow.net/questions/309931/}.

\bibitem[Weh92]{WehlauPopov}
David~L. Wehlau, \emph{A proof of the {P}opov conjecture for tori}, Proc. Amer.
  Math. Soc. \textbf{114} (1992), no.~3, 839--845.

\end{thebibliography}

\end{document}